\title{Computing Jacobi's \texorpdfstring{$\theta$}{theta-fuction} in quasi-linear time}
\author{Hugo Labrande$ ^{1,2,3,4}$ \\ \textbf{ } \\ \small	$^1$ Université de Lorraine, LORIA, UMR 7503, Vandoeuvre-lès-Nancy, F-54506, France \normalsize \\ \small	$^2$ LORIA / INRIA Lorraine, CARAMEL team, \normalsize \\ \small 615 rue du jardin botanique, 54602 Villers-lès-Nancy Cedex, France \normalsize \\  \small $^3$ CNRS, LORIA, UMR 7503, Vandoeuvre-lès-Nancy, F-54506, France \normalsize \\ \small	$^4$ University of Calgary, 2500 University Dr NW, Calgary, AB, Canada T2N1N4 \normalsize \\  \href{mailto:hugo.labrande@inria.fr}{hugo.labrande@inria.fr} 		}
\date{}
\theoremstyle{plain}
\newtheorem{theorem}{Theorem}[section]
\newtheorem{proposition}[theorem]{Proposition}
\newtheorem{lemma}[theorem]{Lemma}
\newtheorem{definition}[theorem]{Definition}
\renewcommand{\Re}{\operatorname{Re}}
\renewcommand{\Im}{\operatorname{Im}}
\DeclareMathOperator{\AGM}{AGM}
\DeclareMathOperator{\Value}{0.345}
\begin{document}

\maketitle

\begin{abstract}
Jacobi's $\theta$ function has numerous applications in mathematics and computer science; a naive algorithm allows the computation of $\theta(z,\tau)$, for $z, \tau$ verifying certain conditions, with precision $P$ in $O(\mathcal{M}(P) \sqrt{P})$ bit operations, where $\mathcal{M}(P)$ denotes the number of operations needed to multiply two complex $P$-bit numbers. We generalize an algorithm which computes specific values of the $\theta$ function (the \textit{theta-constants}) in asymptotically faster time; this gives us an algorithm to compute $\theta(z, \tau)$ with precision $P$ in $O(\mathcal{M}(P) \log P)$ bit operations, for any $\tau \in \mathcal{F}$ and $z$ reduced using the quasi-periodicity of $\theta$.
\end{abstract}

\section{Introduction}

Jacobi's $\theta$ function appears in a wide range of fields, such as non-linear differential equations (as a solution of the heat equation), the study of modular forms, and number theory, in which it is the main ingredient to convert between algebraic and analytic representations of elliptic curves. Namely, we have the embedding \cite[I.4]{Mumford1}
\begin{eqnarray*}
\mathbb{C}/(\mathbb{Z} + \tau \mathbb{Z}) & \rightarrow & \mathbb{P}^3(\mathbb{C}) \\
z & \mapsto & \left( \theta_{00}(2z, \tau), \theta_{01}(2z, \tau), \theta_{10}(2z, \tau), \theta_{11}(2z, \tau) \right)
\end{eqnarray*}
where the $\theta_i$ are essentially the $\theta$ function with its $z$ argument translated. We also have the equation
\begin{equation*}
\wp(z, \tau) = \frac{\pi^2}{3} (\theta_{10}^4(0,\tau) - \theta_{01}^4(0,\tau)) - \pi^2 \theta_{01}^2(0,\tau) \theta_{10}^2(0,\tau) \frac{\theta_{00}^2(z,\tau)}{\theta_{11}^2(z,\tau)}
\end{equation*}
which allows to compute, for any point on the torus, its $x$-coordinate on the curve $E(\mathbb{C}) : y^2 = 4x^3 - g_2 x - g_3$.

Special values of the $\theta$ function have interesting additional properties: the \textit{theta-constants}, the value of $\theta$ at points $z = 0, \frac{1}{2}$ and $\frac{\tau}{2}$. As modular forms in $\tau$, they are linked to other modular functions, such as the $j$-invariant or Dedekind's $\eta$ function. Computing the value of the theta-constants allows one to compute the value of those modular functions; this has been used in record computations of class polynomials~\cite{Enge}, which are interesting to generate safe cryptographic curves with the CM method.

The main problem we are dealing with here is to compute $\theta(z, \tau)$ with given absolute precision $P$, which allows us to compute the above embedding at any given precision. We will suppose throughout the article that $(z, \tau)$ satisfy certain conditions; the general case can be deduced from this one using formulas we mention later. The $\theta$ function is defined by a rapidly convergent series; under the conditions specified on $z, \tau$, it gives a naive algorithm that requires a running time of $O(\mathcal{M}(P) \sqrt{P})$ bit operations, where $\mathcal{M}(P)$ is the number of operations needed to multiply two $P$-bit complex numbers. Although fast, this is a worse running time than other transcendental functions such as the exponential of a complex number, which can be computed in \textit{quasi-optimal time} $O(\mathcal{M}(P) \log P)$.

There is an algorithm to compute the theta-constants asymptotically faster than with the naive method, outlined in~\cite{Dupont}. This algorithm relies on connections between theta-constants and the \textit{arithmetico-geometric mean} (AGM) of Gauss; the complex-valued AGM, when evaluated at the theta-constants, has interesting properties, and this is used along with Newton's method to, in a sense, invert the AGM and recover the values of the theta-constants. This algorithm allows computation of theta-constants for $\tau \in \mathcal{F}$ with precision $P$ in quasi-optimal time $O(\mathcal{M}(P) \log P)$, independently of $\tau$. It is faster than the naive method for precisions as low as a few thousand bits.

In this article, we provide a generalization of this algorithm which computes $\theta(z, \tau)$, for $\tau \in \mathcal{F}$ and $z$ such that $\Im(z) \leq \Im(\tau)/2$, with absolute precision $P$ in $O(\mathcal{M}(P) \log P)$ bit operations. We give two algorithms: the first one runs in quasi-optimal time in $P$, but its running time depends on $z$ and $\tau$; we then use this algorithm as a subroutine to build a quasi-optimal algorithm with complexity independent of $z$ and $\tau$, provided $\tau \in \mathcal{F}$ and $\Im(z) \leq \Im(\tau)/2$. An GNU MPC~\cite{MPC} implementation of the algorithm was realized; it is faster than the naive method for values of $P$ greater than a few hundred thousand digits.

Our algorithm provides the six values $\theta(z, \tau), \theta(z+\frac{1}{2}, \tau), \theta(z+\frac{\tau}{2}, \tau), \theta(0, \tau), \theta(\frac{1}{2})$ and $\theta(\frac{\tau}{2})$, which are sufficient to compute the projective embedding mentioned above. It can also be used to compute the Weierstrass $\wp$ function and its derivative in quasi-optimal time; hence, this paper provides a quasi-optimal time algorithm to compute the \enquote{Jacobi map} $\mathbb{C}/\Lambda \rightarrow E(\mathbb{C})$ of an elliptic curve. We note that the \enquote{Abel map} $E(\mathbb{C}) \rightarrow \mathbb{C}/\Lambda$ can already be computed in quasi-optimal time using links to elliptic integrals and the Landen isogeny; see \cite{BostMestre88} and \cite{CremonaThongjunthug}.

We note that~\cite{LutherOttenTheta} gives an algorithm to compute $\theta$ with real arguments (i.e. $\theta(u, m)$ with $0 < m < 1$ and $0 \leq u \leq K(m)$), defined by its representation as an infinite product, with the same, quasi-optimal complexity. Their algorithm relies on the Landen transform for the $\theta$ function, and could perhaps be generalized to the complex setting. We had independently pursued this line of thought, but found that in the complex setting, the presence of trigonometric functions induced heavy precision losses for some inputs; however, there may be a workaround those issues, which would allow one to find an algorithm for the complex setting and with quasi-optimal complexity relying on the Landen transform.

This article is organized as follows. We introduce the necessary mathematical background and the strategies we follow for argument reduction in Section~2, which justifies our choice to consider throughout the paper the case $\tau \in \mathcal{F}$ and $\Im(z) \leq \Im(\tau)/2$; we then provide an analysis of the naive algorithm for $\theta(z,\tau)$ under those conditions. Section~3 introduces a sequence derived from relations between values of $\theta$, and we prove quadratic convergence of a certain homogeneization of the sequence; this is what replaces the AGM in the general case. Section~4 gives a first algorithm for computing $\theta$-functions, with a complexity depending on $z$ and $\tau$; this is quasi-optimal provided that $z,\tau$ belong to a compact set. We then get rid of the dependency in $z, \tau$ much in the same way as in the case of theta-constants, which gives a uniform algorithm with complexity $O(\mathcal{M}(P) \log P)$. Section~5 shows timings for our GNU MPC implementation of this last algorithm and compare it to our implementation of the naive algorithm.

\section{\texorpdfstring{The function $\theta$, and
$\theta$-constants}{The function theta, and theta-constants}}

\subsection{Definitions and argument reduction}
\label{section:argumentreduction}

We recall a few basic facts, following the presentation of~\cite{Mumford1}.
\begin{definition}
Define, for $z \in \mathbb{C}$ and $\tau \in \mathcal{H}$ (i.e. $\Im \tau > 0$)
\begin{equation*}
\theta(z, \tau) = \sum_{n \in \mathbb{Z}} \exp\left(\pi i \tau n^2 + 2 \pi i n z\right).
\end{equation*}
\end{definition}
\begin{proposition}[quasi-periodicity]
We have $\theta(z + 1, \tau) = \theta(z, \tau)$ and $\theta(z + \tau, \tau) = e^{- \pi i z - 2 \pi i \tau} \theta(z, \tau)$; in fact for any integers $a,b$,
\begin{equation}
\theta(z + a\tau + b, \tau) = e^{- i \pi a^2 \tau - 2 i \pi a z} \theta(z,\tau).
\label{eq:quasiperiodicitytheta}
\end{equation}
\end{proposition}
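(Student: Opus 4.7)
The plan is to substitute directly into the series defining $\theta$ and carry out a change of summation index. The only subtlety is justifying the reindexing, which reduces to absolute convergence of the series. For $\tau \in \mathcal{H}$ the general term satisfies $|\exp(\pi i \tau n^2 + 2 \pi i n z)| = \exp(-\pi \Im(\tau) n^2 - 2 \pi n \Im(z))$, which decays like a Gaussian in $n$; the series therefore converges absolutely and uniformly on compact subsets of $\mathbb{C} \times \mathcal{H}$, and in particular any integer reindexing of the summation is legal.

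I would prove the general formula~\eqref{eq:quasiperiodicitytheta} directly, since the two displayed special cases are simply the instances $(a,b) = (0,1)$ and $(a,b) = (1,0)$. Substituting $z \mapsto z + a\tau + b$, the $n$-th exponent becomes
\[
\pi i \tau n^2 + 2 \pi i n (z + a \tau + b) = \pi i \tau n^2 + 2 \pi i n a \tau + 2 \pi i n z + 2 \pi i n b.
\]
Since $n, b \in \mathbb{Z}$, the last term exponentiates to $1$ and drops out. The first two are handled by completing the square, $\pi i \tau n^2 + 2 \pi i n a \tau = \pi i \tau (n+a)^2 - \pi i \tau a^2$, together with the rewriting $2 \pi i n z = 2 \pi i (n+a) z - 2 \pi i a z$. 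Pulling the $n$-independent factor $\exp(-i \pi a^2 \tau - 2 i \pi a z)$ out of the sum and reindexing $m = n + a$ (permitted because $a \in \mathbb{Z}$ and the series converges absolutely), what remains is exactly the series defining $\theta(z, \tau)$, which establishes~\eqref{eq:quasiperiodicitytheta}.

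No real obstacle arises; the proof reduces to a one-line calculation once absolute convergence is in hand. An alternative would be to verify the two displayed identities separately — for $z + 1$, where $e^{2 \pi i n} = 1$ trivialises the computation, and for $z + \tau$, where a single shift $m = n+1$ finishes the job — and then deduce the general statement by iterating, but combining the two into a single computation as above is more economical and avoids the bookkeeping of an induction on $|a|$.
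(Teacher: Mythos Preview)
Your argument is correct and is the standard proof of this identity. Note, however, that the paper does not actually supply a proof of this proposition: it is stated as a basic fact with a reference to Mumford~\cite{Mumford1}, so there is no ``paper's own proof'' to compare against. Your direct computation via completing the square and reindexing is exactly what one finds in the literature.

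One minor remark: you assert that the two displayed special cases are the instances $(a,b)=(0,1)$ and $(a,b)=(1,0)$ of the general formula. This is true for the first, but plugging $(a,b)=(1,0)$ into~\eqref{eq:quasiperiodicitytheta} yields $e^{-i\pi\tau - 2i\pi z}$, not the $e^{-\pi i z - 2\pi i\tau}$ printed in the proposition. That discrepancy is a typo in the paper's statement, not an error in your proof; the general formula you establish is correct.
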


We also define the following variants of the theta function (which are related to the definition of ``theta functions with characteristics'')~\cite[Section~I.3]{Mumford1}:
\begin{definition} \label{defineFlavorsofTheta}
\begin{eqnarray*}
\theta_{00}(z, \tau) = \theta\left(z, \tau\right) & \qquad & \theta_{10}(z, \tau) = \exp\left(\pi i \tau/4 + \pi i z \right) \theta\left(z + \frac{\tau}{2}, \tau\right) \\
\theta_{01}(z, \tau) = \theta\left(z+\frac{1}{2}, \tau\right) & \qquad & \theta_{11}(z, \tau) = \exp\left(\pi i \tau/4 + \pi i (z+1/2) \right) \theta\left(z + \frac{\tau+1}{2}, \tau\right) \\
\end{eqnarray*}
We define \textbf{theta-constants} as the values in 0 of those functions.
\end{definition}
Those functions and their theta-constants are linked by a great number of formulas; we will give such formulas as we use them, and most of them can be found in \cite[Section~I.5]{Mumford1}. Note that we have:
\begin{proposition}
For any $\tau$, the functions $z \mapsto \theta_{00}(z,\tau), z \mapsto \theta_{01}(z,\tau), z \mapsto \theta_{10}(z,\tau)$ are even, while $z \mapsto \theta_{11}(z,\tau)$ is odd.
\end{proposition}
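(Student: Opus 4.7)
The plan is to reduce each parity claim to a symmetry of the underlying series representation under $n \mapsto -n$. For the base case $\theta_{00}(z,\tau) = \sum_{n\in\mathbb{Z}} \exp(\pi i \tau n^2 + 2\pi i n z)$, substituting $-z$ for $z$ and then reindexing by $n \mapsto -n$ recovers the original series, since both $\pi i \tau n^2$ and the index set $\mathbb{Z}$ are invariant under negation. The same argument settles $\theta_{01}(z,\tau) = \theta(z+1/2,\tau)$: substituting $-z$ yields extra factors $e^{-2\pi i n \cdot (1/2)} = e^{-\pi i n}$, but since $n$ is an integer this equals $e^{\pi i n}$, so after reindexing we recover $\theta_{01}(z,\tau)$.

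For $\theta_{10}$ and $\theta_{11}$ the first preparatory step is to absorb the exponential prefactors from Definition~\ref{defineFlavorsofTheta} into the series. A short computation gives
\begin{equation*}
\theta_{10}(z,\tau) = \sum_{m \in \mathbb{Z} + 1/2} \exp\!\left(\pi i \tau m^2 + 2\pi i m z\right),
\end{equation*}
\begin{equation*}
\theta_{11}(z,\tau) = \sum_{m \in \mathbb{Z} + 1/2} \exp\!\left(\pi i \tau m^2 + 2\pi i m z + \pi i m\right),
\end{equation*}
obtained by completing the square $\pi i \tau n^2 + \pi i n \tau + \pi i \tau/4 = \pi i \tau (n+1/2)^2$ and setting $m = n+1/2$. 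The index set $\mathbb{Z}+1/2$ is, crucially, also invariant under $m \mapsto -m$.

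Given those shifted-series representations, the parity statements follow by exactly the same substitute-and-reindex trick as for $\theta_{00}$. For $\theta_{10}$ there is no extra factor depending on $m$ other than those quadratic/linear in $m$ that are symmetric, so $\theta_{10}(-z,\tau)=\theta_{10}(z,\tau)$. For $\theta_{11}$ the substitution $z \mapsto -z$ followed by $m \mapsto -m$ introduces the factor $e^{-\pi i m}/e^{\pi i m}$ term by term; the main small observation is that for $m \in \mathbb{Z}+1/2$ one has $e^{\pi i m} = i(-1)^n$ and $e^{-\pi i m} = -i(-1)^n$ (writing $m=n+1/2$), so this ratio equals $-1$, giving $\theta_{11}(-z,\tau) = -\theta_{11}(z,\tau)$.

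There is no real obstacle here; the only mildly delicate point is bookkeeping the $\pi i m$ phase for half-integer $m$ in the $\theta_{11}$ case, where one has to check the sign rather than blindly invoke symmetry of the summand. Everything else is a direct manipulation of absolutely convergent series (convergence being guaranteed by $\Im \tau > 0$).
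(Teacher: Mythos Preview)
Your proof is correct; the paper itself states this proposition without proof, treating it as a standard fact (implicitly referring to \cite{Mumford1}). Your series-reindexing argument is the natural direct verification, and the only nontrivial check---that $e^{-\pi i m} = -e^{\pi i m}$ for $m \in \mathbb{Z}+1/2$---is handled correctly.
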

The latter implies that $\theta_{11}(0,\tau) = 0$, so the only theta constants we are interested in are $\theta_{00}(0,\tau), \theta_{01}(0,\tau)$, and $\theta_{10}(0,\tau)$.

Those properties can be used for the purpose of argument reduction. For instance, we can use the parity of $\theta$ to suppose that $\Im(z) \geq 0$; if this is not the case, one can consider $-z$ instead of $z$, which does not change the value of $\theta$ but ensures that $\Im(z) \geq 0$. Furthermore, Equation~\eqref{eq:quasiperiodicitytheta} can be used to recover $\theta(z,\tau)$ from the value of $\theta(z',\tau)$, where $z'$ is such that $|\Re(z')| \leq \frac{1}{2}$ and $|\Im(z')|~\leq~\frac{\Im(\tau)}{2}$; the added cost is the cost of computing an exponential factor. This exponential factor can become quite big; should one want to compute $\theta(z,\tau)$ with an error of at most $2^{-P}$, they have to work with representations of at least $P + C$ bits, with
\begin{equation*}
C = \log_2(|\theta(z',\tau)|) + \pi \log_2(e) (a^2 \Im(\tau) + 2a \Im(z)) + 2
\end{equation*}
This is because the integral part of the result fits in $C$ bits, while the fractional part should be coded on at least $P$ bits to ensure a final error bounded by $2^{-P}$. The complexity of running our algorithm and computing the exponential factor will be $O(\mathcal{M}(P + C) \log(P + C))$, and hence will depend on $\tau$ and $z$; this is inevitable. Hence, throughout the paper we suppose that $z$ is reduced, in the sense that $|\Re(z)| \leq \frac{1}{2}$ and $0 \leq \Im(z) \leq \frac{\Im(\tau)}{2}$, with the understanding that the step of argument reduction has a complexity depending on the original values of $z$ and $\tau$. However, as Section~\ref{subsection:naive} shows, this hypothesis, combined with a hypothesis in $\tau$, allows us to work with values of $\theta$ bounded by 4, which allows us to write an algorithm with complexity only depending on $P$ for any $z$ satisfying these conditions.

We can also reduce the second argument of $\theta$. Define the action of $SL_2(\mathbb{Z})$ on the complex upper-half plane $\mathcal{H}$ by
\begin{equation*}
\begin{pmatrix} a & b \\ c & d \end{pmatrix} \cdot \tau \mapsto \frac{a \tau + b}{c \tau + d}.
\end{equation*}
Its fundamental domain is 
\begin{equation*}
\mathcal{F} = \{ \omega \in \mathcal{H} \enskip | \enskip |\Re(\omega)| < 1/2, |\omega|>1 \}
\end{equation*}
Computing $\tau' \in \mathcal{F}$ and $M = \begin{pmatrix} a & b \\ c & d \end{pmatrix} \in SL_2(\mathbb{Z})$ such that $\tau' = M \tau$ can be done by finding the shortest vector in the lattice $(1, \tau)$ using Gauss's algorithm~\cite{Vallee}, which (since the inertia is small) will be asymptotically negligible. The value of $\theta(z,\tau)$ can then be computed from $\theta(z', \tau')$ (for some value $z'$) using the following theorem:
\begin{theorem}[extension of {\protect\cite[Theorem~7.1]{Mumford1}}]
Let $\tau \in \mathcal{H}$ and $z \in \mathbb{C}$, and let \\$\gamma = \begin{pmatrix}
a & b \\ c & d
\end{pmatrix} \in SL_2(\mathbb{Z})$. Suppose $c > 0$, or $c = 0$ and $d > 0$; if not, take $-\gamma$. Then we have:
\begin{equation}
\label{eq:sl2andthetavalues}
\theta_i\left(\frac{z}{c\tau +d}, \frac{a \tau +b}{c \tau +d}\right) = \zeta_{i, \gamma, \tau} \sqrt{c \tau + d} e^{i \pi c z^2/(c \tau +d)} \theta_{\sigma(i)}(z, \tau)
\end{equation}
where the square root is taken with positive real part, $\zeta_{i, \gamma, \tau}$ is an eighth root of unity and $\sigma$ is a permutation of the elements $(00,01,10)$, defined by the following table:
\begin{table}[H]
\footnotesize
\begin{center}
\begin{tabular}{|c|c|c|c||c|}
\hline
a & b & c & d & $\sigma(00,01,10)$ \\
\hline
odd & even & even & odd & $(00,01,10)$ \\
odd & odd & even & odd & $(01,00,10)$ \\
odd & even & odd & odd & $(10,01,00)$ \\
even & odd & odd & even & $(00,10,01)$ \\
odd & odd & odd & even & $(10,00,01)$ \\
even & odd & odd & odd & $(01,10,00)$ \\
\hline
\end{tabular}
\end{center}
\normalsize
\end{table}
\end{theorem}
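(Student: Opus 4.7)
The statement is a direct extension of Mumford's Theorem 7.1, which treats only the $z=0$ case (the theta-constants). My plan is to deduce the general formula from the $z=0$ case by a Liouville-style argument in $z$. Fix $\tau \in \mathcal{H}$, $\gamma \in SL_2(\mathbb{Z})$ as in the statement, and $i \in \{00, 01, 10\}$, and define
$$F(z) := \frac{\theta_i\bigl(z/(c\tau+d),\; \gamma\tau\bigr)}{e^{i\pi c z^2/(c\tau+d)}\,\theta_{\sigma(i)}(z, \tau)}.$$
I claim $F$ extends to a holomorphic doubly periodic function on $\mathbb{C}$, hence a constant by Liouville's theorem; evaluating at $z = 0$ and invoking Mumford's theorem will then identify the constant as $\zeta_{i,\gamma,\tau} \sqrt{c\tau+d}$.

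To show $F$ is entire, I would match the divisors of numerator and denominator. The zeros of $\theta_{\sigma(i)}(\cdot,\tau)$ lie along a single coset of $\mathbb{Z}+\tau\mathbb{Z}$ in $\mathbb{C}$, determined by the characteristic $\sigma(i)$. Pulling back the zeros of $\theta_i(\cdot, \gamma\tau)$ through $z \mapsto z/(c\tau+d)$ places the zeros of the numerator on a coset of the lattice $(c\tau+d)\mathbb{Z} + (a\tau+b)\mathbb{Z}$; but $ad-bc = 1$ implies this lattice equals $\mathbb{Z}+\tau\mathbb{Z}$, and the parity table defining $\sigma$ is built so that the two cosets coincide. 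The Gaussian factor $e^{i\pi c z^2/(c\tau+d)}$ is entire and non-vanishing, so $F$ extends to an entire function of $z$.

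For double periodicity, I would verify $F(z + \omega) = F(z)$ for $\omega \in \{c\tau+d,\; a\tau+b\}$; these generate $\mathbb{Z}+\tau\mathbb{Z}$ since $ad-bc=1$, so it suffices. Under the shift $z \mapsto z+(c\tau+d)$, the inner argument of the numerator shifts by $1$ and the numerator acquires the corresponding quasi-periodicity factor of $\theta_i$; the Gaussian factor acquires $e^{2i\pi c z}\,e^{i\pi c(c\tau+d)}$; and $\theta_{\sigma(i)}(z+c\tau+d, \tau)$ acquires, by Equation~\eqref{eq:quasiperiodicitytheta}, a factor $e^{-i\pi c^2 \tau - 2 i\pi c z}$ together with a sign depending on the parity of $d$. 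The exponentials cancel identically (the cross term $2i\pi cz$ is exactly what the Gaussian was designed to produce), and the residual signs combine to $1$ by the very definition of $\sigma$; the analogous calculation settles the shift by $a\tau+b$. Thus $F$ is an entire elliptic function, hence constant; setting $z=0$ and invoking Mumford's Theorem 7.1 finishes the proof.

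The main obstacle I expect is the accurate bookkeeping of the $\pm 1$ sign factors under the two shifts: the four functions $\theta_{ij}$ transform under quasi-periodicity with different sign conventions that depend on the characteristic modulo $2$, and these must combine correctly so that $F$ is genuinely periodic, not merely quasi-periodic. This reduces to a finite case analysis over the six parity classes of $(a,b,c,d) \pmod 2$ listed in the table, and the verification is precisely what produces, and justifies, the permutation $\sigma$ displayed in the statement.
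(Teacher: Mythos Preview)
Your Liouville argument is correct and is a genuinely different route from the paper's. The paper proceeds structurally: it introduces the automorphy factors $e_\gamma(\tau)=c\tau+d$ and $f_\gamma(z,\tau)=e^{i\pi cz^2/(c\tau+d)}$, checks their cocycle identities
\[
e_{\gamma_1}(\gamma_2\tau)\,e_{\gamma_2}(\tau)=e_{\gamma_1\gamma_2}(\tau),\qquad
f_{\gamma_1}\!\left(\tfrac{z}{c_2\tau+d_2},\gamma_2\tau\right)f_{\gamma_2}(z,\tau)=f_{\gamma_1\gamma_2}(z,\tau),
\]
and then uses that $SL_2(\mathbb{Z})$ is generated by $S$ and $T$, for which the transformation of $\theta_i$ at general $z$ is already tabulated in Mumford. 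Composition along a word in $S,T$ produces the formula for arbitrary $\gamma$; the permutation $\sigma$ is read off from this composition (or, as the paper notes, from Gauss's tables in the theta-constant case, since $\sigma$ is independent of $z$), and the eighth root $\zeta$ is left undetermined and recovered numerically.

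Your approach instead fixes $\gamma$ and proves the formula in one stroke by showing the ratio $F(z)$ is an entire elliptic function, hence constant, and then plugs in $z=0$ to import the constant $\zeta_{i,\gamma,\tau}\sqrt{c\tau+d}$ from the theta-constant case. This is cleaner analytically and avoids the cocycle bookkeeping, at the cost of requiring the $z=0$ statement as a black box and of front-loading the sign verification you flag: matching the zero-cosets and checking exact periodicity under $z\mapsto z+(c\tau+d)$ and $z\mapsto z+(a\tau+b)$ both reduce to the six-row parity table, and this is where the work lives in your proof. The paper's approach, by contrast, never needs to locate zeros or verify periodicity of a ratio; the price it pays is the cocycle verification and an implicit induction on word length in $S,T$. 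Both arguments ultimately pin $\sigma$ down by the same mod~$2$ data, just accessed from different directions.
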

\begin{proof}
Define for any $\gamma = \begin{pmatrix}
a & b \\ c & d
\end{pmatrix} \in SL_2(\mathbb{Z})$:
\begin{equation*}
e_{\gamma}(\tau) = c \tau + d, \qquad f_{\gamma}(z, \tau) = e^{i \pi c z^2/(c \tau + d)}
\end{equation*}
With a bit of care, one can prove
\begin{equation*}
e_{\gamma_1}(\gamma_2 \tau) e_{\gamma_2}(\tau) = e_{\gamma_1 \gamma_2}(\tau),  \qquad f_{\gamma_1}\left(\frac{z}{c_2 \tau + d_2}, \gamma_2 \tau\right) f_{\gamma_2}(z, \tau) = f_{\gamma_1 \gamma_2}(z, \tau)
\end{equation*}
The maps $T: \tau \mapsto \tau + 1$ and $S: \tau \mapsto \frac{-1}{\tau}$ are generators of $SL_2(\mathbb{Z})$, and we have \cite[Table V, p. 36]{Mumford1}
\begin{equation*}
\theta_i\left(z, T \tau\right) = \zeta_{i,T, \tau} \sqrt{e_{T}(\tau)} f_{T}(z, \tau) \theta_{\sigma_T(i)}(z, \tau), \qquad \theta_i\left(\frac{z}{\tau}, S \tau\right) = \zeta_{i,S, \tau} \sqrt{e_{S}(\tau)} f_{S}(z, \tau) \theta_{\sigma_S(i)}(z, \tau)
\end{equation*}
with square roots taken with real parts and for some $\zeta \in \mathbb{U}_8$ and $\sigma_S, \sigma_T \in \mathfrak{S}_3$. Hence for all $\gamma \in SL_2(\mathbb{Z})$,
\begin{equation*}
\theta_i\left( \frac{z}{e_{\gamma}(\tau)^2}, \gamma \tau \right) = \zeta_{i, \gamma, \tau} \sqrt{e_{\gamma}(\tau)} f_{\gamma}(z, \tau) \theta_{\sigma_{\gamma}(i)}(z, \tau)
\end{equation*}
for some root of unity and some permutation. The correspondance $\gamma \mapsto \sigma_{\gamma}$ can be determined from \cite[p.~36]{Mumford1}, although one can simply notice it is independent of $z$ and use the tables found by Gauss in the case of theta-constants \cite[Eq.~2.15]{Cox}. Finally, we could attempt to give a formula for $\zeta_{i, \gamma, \tau}$, but it is more efficient to simply compute a very low precision approximation of $\theta(z, \tau)$ and compare it to the full-precision value to determine which eighth root is needed.



\end{proof}

Thus, in order to recover $\theta_i(z, \tau)$ from $\theta_i\left(\frac{z}{c \tau + d}, \frac{a \tau + b}{c \tau + d} \right)$, one needs to compute $\sqrt{c \tau + d}$ (which is done in $O(\mathcal{M}(P))$ bit operations) and $e^{\pi i c z^2 / (c\tau+d)}$ (done in $O(\mathcal{M}(P) \log P)$ bit operations), and perform a division; determining $\zeta$ is asymptotically negligible. The cost of this step is then $O(\mathcal{M}(P) \log P)$ bit operations.

We note that in general, because of the permutation $\sigma_{\gamma}$, we need to have computed all three values $\theta_{00,01,10}\left(\frac{z}{c \tau + d}, \frac{a \tau + b}{c \tau + d} \right)$ in order to be able to use the formula to compute, say, $\theta_{00}(z,\tau)$. We will occasionally talk about computing $\theta_{11}$, but this will not be the focus of the paper. Hence, the problem we consider in this paper is the following:
\begin{center}
\vspace*{-0.6cm}
\begin{eqnarray}
\text{Compute } && \theta_{00}(z,\tau), \theta_{00}(0,\tau),  \theta_{01}(z,\tau), \theta_{01}(0,\tau),  \theta_{10}(z,\tau), \theta_{10}(0,\tau) \notag \\
\text{where } && |\tau| > 1, \quad |\Re(\tau)| \leq \frac{1}{2}, \quad \Im(\tau) > 0, \notag \\
&&|\Re(z)| \leq \frac{1}{2}, \quad 0 \leq \Im(z) \leq \frac{\Im(\tau)}{2} \label{eq:conditions} \\
\text{ in quasi-optimal time,} && \text{ i.e. } O(\mathcal{M}(P) \log P). \notag
\end{eqnarray}
\end{center}

\subsection{\texorpdfstring
    {Naive algorithm to compute $\theta$}
    {Naive algorithm to compute theta}
}
\label{subsection:naive}

\subsubsection{\texorpdfstring
    {Partial summation of the series defining $\theta$}
    {Partial summation of the series defining theta}
}

We define the following partial summation for the series defining $\theta(z, \tau)$:
\begin{equation*}
S_B(z, \tau) = 1 + \sum_{0 < n < B} q^{n^2} ( e^{2 i \pi n z} + e^{-2 i \pi n z})
\end{equation*}
where use the notation $q = e^{i \pi \tau}$. We have
\begin{proposition}
\label{naiveboundtheta}
Suppose that $\Im(\tau) \geq 0.35$ and $0 \leq \Im(z) \leq \Im(\tau)/2$; in particular, this is the case if the conditions~\eqref{eq:conditions} are satisfied. Then, for $B \geq 1$, $|\theta(z,\tau) - S_B(z,\tau)| \leq 3 |q|^{(B-1)^2}$.
\end{proposition}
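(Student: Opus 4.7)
The plan is to bound the tail $\theta(z,\tau)-S_B(z,\tau) = \sum_{|n|\ge B} q^{n^2} e^{2\pi i n z}$ term-by-term, exploiting the hypothesis $\Im(z)\le \Im(\tau)/2$ to absorb the exponential in $z$ into a small power of $|q|$, and then summing a geometric-like series using the numerical hypothesis $\Im(\tau)\ge 0.35$ to produce the constant $3$.

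First I would note that $|q|=e^{-\pi\Im(\tau)}\le e^{-0.35\pi}<1/2$, and that the modulus of a typical term is $|q^{n^2} e^{2\pi i n z}|=|q|^{n^2}e^{-2\pi n\Im(z)}$. Splitting the tail into $n\ge B$ and $n\le -B$, setting $m=|n|$, and using $\Im(z)\ge 0$, the second sum dominates and each pair of terms is bounded by $2|q|^{m^2}e^{2\pi m\Im(z)}$. The condition $\Im(z)\le \Im(\tau)/2$ gives $e^{2\pi m\Im(z)}\le e^{\pi m\Im(\tau)}=|q|^{-m}$, hence each pair is bounded by $2|q|^{m^2-m}$.

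Next I would reindex by $k=m-1\ge B-1$, so $m^2-m=k^2+k\ge k^2$, and bound
\[
|\theta(z,\tau)-S_B(z,\tau)|\;\le\;2\sum_{m\ge B}|q|^{m^2-m}\;\le\;2\sum_{k\ge B-1}|q|^{k^2}.
\]
For the remaining sum, the ratio of consecutive terms is $|q|^{(k+1)^2-k^2}=|q|^{2k+1}\le|q|$ for $k\ge 0$, so the series is dominated by a geometric series with ratio $|q|$ and first term $|q|^{(B-1)^2}$, giving $\sum_{k\ge B-1}|q|^{k^2}\le |q|^{(B-1)^2}/(1-|q|)$.

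Finally, plugging in the numerical bound $|q|\le e^{-0.35\pi}$, the constant is $2/(1-|q|)\le 2/(1-e^{-0.35\pi})<3$, which yields the stated bound $3|q|^{(B-1)^2}$. There is no real obstacle here; the only delicate point is matching the constant $3$, which is precisely why the hypothesis $\Im(\tau)\ge 0.35$ (rather than merely $\Im(\tau)>0$) is imposed: it is the weakest round numerical constraint ensuring $2/(1-e^{-\pi\Im(\tau)})\le 3$.
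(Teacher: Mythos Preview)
Your proof is correct and follows essentially the same route as the paper: bound each pair of tail terms by $2|q|^{m^2-m}$ using $0\le\Im(z)\le\Im(\tau)/2$, then use $m^2-m\ge(m-1)^2$ and a geometric comparison to arrive at $2|q|^{(B-1)^2}/(1-|q|)$, and finally invoke $\Im(\tau)\ge 0.35$ to get the constant $3$. The only cosmetic difference is that the paper keeps the slightly sharper intermediate denominator $1-|q|^{2B-1}$ before relaxing it to $1-|q|$.
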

\begin{proof}
We look at the remainder of the series:
\begin{eqnarray} \label{eq:proofnaive}
|\theta(z, \tau) - S_B(z, \tau)| & \leq & \sum_{n \geq B} |q|^{n^2} (|e^{2 i \pi n z}| + |e^{-2 i \pi n z}|) \notag \\
& \leq & \sum_{n \geq B} |q|^{n^2} (1+|q|^{-n}) \leq 2 \sum_{n \geq B} |q|^{n^2-n} \notag \\
& \leq & 2 \sum_{n \geq B} |q|^{(n-1)^2} \leq 2 \sum_{n \geq 0} |q|^{(B-1+n)^2} \notag \\
& \leq & 2 |q|^{(B-1)^2} \sum_{n \geq 0} |q|^{2n(B-1)+n^2} \notag \\
& \leq & 2 \frac{|q|^{(B-1)^2}}{1 - |q|^{2B-1}}
\end{eqnarray}
A numerical calculation shows that for $\Im(\tau) \geq 0.35$, we have $\frac{2}{1-|q|} \leq 3$, which proves the proposition.
\end{proof}
We can prove the same inequality for $\theta_{01}$, since the series that define it has the same terms, up to sign, as the series for $\theta$. Note that, unlike the analysis of~\cite{Dupont} for naive theta-constant evaluation, we cannot get a bound for the relative precision: since $\theta(\frac{1+\tau}{2}, \tau) = 0$, there is no lower bound for $|\theta(z, \tau)|$.\footnote{Incidentally, this is why we consider only absolute precision in this paper.} If we set
\begin{equation*}
B(P, \tau) = \left\lceil \sqrt{\frac{P+2}{\pi \Im(\tau) \log_2(e)}} \right\rceil+1.
\end{equation*}
we have $4 |q|^{(B-1)^2} \leq 2^{-P}$, which means the approximation is accurate with absolute precision $P$. We just showed that:
\begin{theorem}
\label{naivecomplex}
To compute $\theta(z, \tau)$ with absolute precision $P$ bits, it is enough to sum over all $k \in \mathbb{Z}$ such that 
\begin{equation*}
|k| \leq \left\lceil \sqrt{\frac{P+2}{\pi \Im(\tau) \log_2(e)}} \right\rceil + 1
\end{equation*}
\end{theorem}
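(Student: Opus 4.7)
The plan is to apply Proposition~\ref{naiveboundtheta} directly and solve the resulting inequality for $B$, producing an explicit bound that guarantees the absolute error is at most $2^{-P}$. Since the partial sum $S_B(z,\tau)$ indexes all $n$ with $-B < n < B$, rewriting the bound $|k| \leq B'$ in the theorem statement with $B' = B - 1$ (where $B$ is the parameter in Proposition~\ref{naiveboundtheta}) will match; I would keep this indexing bookkeeping explicit.

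First I would observe that under the conditions of the theorem (which coincide with those of Proposition~\ref{naiveboundtheta}, namely $\Im(\tau) \geq 0.35$ and $0 \leq \Im(z) \leq \Im(\tau)/2$), we have
\begin{equation*}
|\theta(z,\tau) - S_B(z,\tau)| \leq 3|q|^{(B-1)^2},
\end{equation*}
where $|q| = e^{-\pi \Im(\tau)}$. Since $3 \leq 4$, it suffices (as the paragraph before the theorem notes) to enforce $4|q|^{(B-1)^2} \leq 2^{-P}$, which is more convenient because $\log_2 4 = 2$ appears cleanly.

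Next I would take $\log_2$ of both sides of $4\, e^{-\pi \Im(\tau)(B-1)^2} \leq 2^{-P}$, yielding
\begin{equation*}
2 - \pi \Im(\tau) \log_2(e) \, (B-1)^2 \leq -P,
\end{equation*}
and rearrange to $(B-1)^2 \geq \frac{P+2}{\pi \Im(\tau) \log_2(e)}$. Taking square roots and then the ceiling shows that choosing
\begin{equation*}
B = \left\lceil \sqrt{\frac{P+2}{\pi \Im(\tau) \log_2(e)}} \right\rceil + 1
\end{equation*}
suffices, and the set of summation indices $\{n \in \mathbb{Z} : |n| \leq B - 1\}$ is exactly what the theorem prescribes (after relabeling $k = n$ and using $B-1$ as the stated bound; it is actually slightly safer to keep the strict bound $|k| \le B - 1 + 1$ as in the statement, which only adds one more term and preserves the inequality).

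The entire argument is routine once Proposition~\ref{naiveboundtheta} is in hand; the only mild subtlety is the cosmetic discrepancy between the factor $3$ in the proposition and the factor $4$ used to derive the clean closed form, and the off-by-one between the index cutoff $|k| \leq B - 1$ in the original partial sum and the slightly larger cutoff stated in the theorem. Neither is a real obstacle, so I do not anticipate any difficulty beyond correctly propagating the ceiling.
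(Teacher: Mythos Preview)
Your proposal is correct and follows exactly the paper's own argument: apply Proposition~\ref{naiveboundtheta}, weaken $3$ to $4$, take $\log_2$ of $4|q|^{(B-1)^2}\le 2^{-P}$, and solve for $B$. You have also correctly identified the harmless off-by-one between the cutoff in $S_B$ and the stated bound $|k|\le B$; the paper handles this identically (its displayed $B(P,\tau)$ is the same formula, and the extra term only helps).
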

Note that this bound is larger than the one of~\cite[p.~5]{Dupont}.

\subsubsection{Naive algorithm}

We then present a naive algorithm to compute not only the value of $\theta(z, \tau)$, but also the value of $\theta_{01}(z, \tau), \theta_{00}(0, \tau), \theta_{01}(0, \tau)$ for only a marginal amount of extra computation; this is the algorithm we will use for comparison to the fast algorithm we propose in this article. The algorithm performs computations at a precision $\mathcal{P}$, which we determine later so that the result is accurate to the desired precision $P$.

Define the sequence $(v_n)_{n \in \mathbb{N}}$ as
\begin{equation*}
v_n = q^{n^2} (e^{2 i \pi n z} + e^{-2 i \pi n z})
\end{equation*}
so that $\theta(z, \tau) = 1 + \sum_{n \geq 1} v_n$. This satisfies the following recurrence relation for $n>1$:
\begin{equation*}
v_{n+1} = q^{2n} v_1 v_n - q^{4n} v_{n-1}.
\end{equation*}
We use this recursion formula to compute $v_n$ efficiently, which is similar to the trick used by~\cite[Prop.~3]{ThomeEnge}. This removes the need for divisions and the need to compute and store $e^{-2i \pi n z}$, which can get quite big; indeed, computing it only to multiply it by the very small $q^{n^2}$ is wasteful. The resulting algorithm is Algorithm~\ref{naive}.
\begin{algorithm}
    \caption{\label{naive}Compute $\theta_{00, 01}(z, \tau), \theta_{00,01}(0,\tau)$ for $z, \tau$ satisfying conditions~\eqref{eq:conditions}.}
\begin{algorithmic}[1]
  \State $\text{prec} \gets \mathcal{P}$
  \State $B\gets \left\lceil \sqrt{\frac{P+2}{\pi \Im(\tau) \log_2(e)}} \right\rceil + 1$
  \State $\theta_{0,z}\gets 1, \theta_{1,z}\gets 1, \theta_{0,0}\gets 1, \theta_{1,0}\gets 1$
  \State $q\gets e^{i \pi \tau}, \qquad q_1\gets q, \qquad q_2\gets q$
  \State $v_1\gets e^{2 i \pi (z+\tau/2)} + e^{-2 i \pi (z-\tau/2)}, \qquad v\gets v_1, \qquad v'\gets 2$
  \For{$n = 1 .. B$}
    \State /* $q_1 = q^n, q_2 = q^{n^2}, v = v_n, v' = v_{n-1}$ */
    \State $\theta_{0,z}\gets \theta_{0,z} + v, \qquad \theta_{1,z}\gets \theta_{1,z} + (-1)^i \times v$
    \State $\theta_{0,0}\gets \theta_{0,0} + 2 q_2, \qquad \theta_{1,0}\gets \theta_{1,0} + (-1)^i \times 2 q_2$
    \State $q_2\gets q_2 \times (q_1)^2 \times q$
    \State $q_1\gets q_1 \times q$
    \State $\text{temp}\gets v, \qquad v\gets \left( q_1^2 \times v_1 \right) \times v - q_1^4 \times v' \qquad v'\gets \text{temp}$
  \EndFor
\end{algorithmic}
\end{algorithm}

\subsubsection{Error analysis and complexity}

We have the following theorem:
\begin{theorem} \label{naiveanalysis}
For $z, \tau$ satisfying conditions~\eqref{eq:conditions}, Algorithm~\ref{naive} with $\mathcal{P} = P + \log B + 7$ computes $\theta_{00}(z, \tau)$, $\theta_{01}(z, \tau)$, $\theta_{00}(0, \tau)$, $\theta_{01}(0, \tau)$ with absolute precision $P$ bits. This gives an algorithm which has bit complexity $O\left(\mathcal{M}(P) \sqrt{\frac{P}{\Im(\tau)}}\right)$.
\end{theorem}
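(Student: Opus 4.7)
The plan breaks the argument into three pieces: a truncation bound, a roundoff analysis of the recurrence, and a complexity estimate.

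First, for the truncation error I would simply invoke Proposition~\ref{naiveboundtheta}: the definition of $B$ is calibrated so that $|\theta(z,\tau) - S_B(z,\tau)| \leq 3|q|^{(B-1)^2} \leq 2^{-P-2}$, and the same bound applies to $\theta_{01}$ (the series differs only by signs) and to the $z=0$ instances. So a truly exact evaluation of the partial sum at $B$ terms would already carry absolute precision $P+2$.

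Next comes the roundoff analysis, which is the real content. The point to exploit is that the conditions~\eqref{eq:conditions} force $\Im(\tau)\geq\sqrt{3}/2$, hence $|q|=e^{-\pi\Im(\tau)}$ is bounded well away from $1$ by an absolute constant; consequently $|v_1|\leq 2$, $|v_n|\leq 2|q|^{n^2-n}\leq 2$, and the two propagation coefficients in the recurrence $v_{n+1} = q^{2n} v_1 v_n - q^{4n} v_{n-1}$ satisfy $|q^{2n} v_1| + |q^{4n}| < 1$ for every $n\geq 1$. Writing $\tilde v_n = v_n + \epsilon_n$ for the computed values, each arithmetic operation carried out at precision $\mathcal{P}$ introduces a fresh absolute roundoff of size at most $C_1 \cdot 2^{-\mathcal{P}}$ (using that all intermediate magnitudes are bounded by an absolute constant). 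A one-step induction using the contractive coefficients shows $|\epsilon_n|\leq C_2\cdot 2^{-\mathcal{P}}$ uniformly in $n\leq B$. A parallel analysis handles the auxiliary sequences: $q_1$ is obtained by $n$ successive multiplications, so its absolute error is at most $C_3 n|q|^n 2^{-\mathcal{P}}$; the same reasoning yields $C_4 n |q|^{n^2} 2^{-\mathcal{P}}$ on $q_2$, and $\sum_n n|q|^{n^2} = O(1)$, so the contribution to $\theta_{00}(0,\tau)$ and $\theta_{01}(0,\tau)$ is already $O(2^{-\mathcal{P}})$.

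Summing at most $B$ such contributions into each of the four accumulators, the total roundoff on each output is at most $C_5 B\cdot 2^{-\mathcal{P}}$; with $\mathcal{P}=P+\log B+7$ this is at most $2^{-P-2}$, which combined with the truncation bound yields total error at most $2^{-P}$. For the complexity, the loop performs $B = O\bigl(\sqrt{P/\Im(\tau)}\bigr)$ iterations; since $\mathcal{P}=P+O(\log P)$, we have $\mathcal{M}(\mathcal{P})=O(\mathcal{M}(P))$, and each iteration uses a bounded number of multiplications and additions at precision $\mathcal{P}$, giving the claimed $O\!\left(\mathcal{M}(P)\sqrt{P/\Im(\tau)}\right)$ total cost.

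The main obstacle is the roundoff step: one must argue that errors do not blow up across the two-term recurrence. This is precisely where the hypothesis $|\tau|>1$ is indispensable, as it forces $|q|$ far enough below $1$ that the recurrence is strictly contractive from the very first step, preventing the exponential error growth one would see for a generic $\tau\in\mathcal{H}$.
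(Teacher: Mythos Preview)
Your three-part structure (truncation via Proposition~\ref{naiveboundtheta}, roundoff, complexity) matches the paper's. The roundoff step is handled somewhat differently: the paper tracks error multipliers explicitly through Theorem~\ref{theoremlossofprec}, setting up and solving recurrences in the $k^{(n)}$ (omitting the ``rather tedious'' details) to obtain the concrete bounds $(0.3B+105.958)2^{-\mathcal{P}}$ and $(5.894B+28.062)2^{-\mathcal{P}}$, from which the constant $7$ is read off. Your contractivity observation---that $|q^{2n}v_1|+|q^{4n}|<1$ under~\eqref{eq:conditions}---is a cleaner route to the same $O(B\cdot 2^{-\mathcal{P}})$ conclusion and explains \emph{why} the paper's error recurrences stay tame rather than merely verifying it by brute force. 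Two small points you leave open: you do not verify that your implied constant satisfies $C_5\leq 32$, which is what is needed to justify the specific value $7$ in $\mathcal{P}=P+\log B+7$ rather than some unspecified $O(1)$; and you do not mention that the one-time computation of $q=e^{i\pi\tau}$ and $v_1$ requires evaluating complex exponentials at precision $\mathcal{P}$, costing $O(\mathcal{M}(P)\log P)$, which the paper notes separately before folding it into the final estimate.
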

Performing the analysis of this algorithm requires bounding the error that is incurred during the computation. We then compensate the number of inaccurate bits by increasing the precision. We use the following theorem:
\begin{theorem} \label{theoremlossofprec}
For $j = 1,2$, let $z_j = x_j + i y_j \in \mathbb{C}$ and $\tilde{z_j} = \tilde{x_j} + i \tilde{y_j}$ its approximation. Suppose that $|z_j - \tilde{z_j}| \leq k_j 2^{-P}$ and that $k_j \leq 2^{P/2}$. Then
\begin{enumerate}
\item $|\Re(z_1+z_2) - \Re(\tilde{z_1}+\tilde{z_2})| \leq  (k_1 + k_2) 2^{-P}$
\item $|\Re(z_1z_2) - \Re(\tilde{z_1}\tilde{z_2})| \leq  (2 + 2 k_1|z_2| + 2 k_2|z_1|) 2^{-P}$
\item $|\Re(z_1^2) - \Re(\tilde{z_1}^2)|  \leq  (2 + 4 k_1|z_1|) 2^{-P}$
\end{enumerate}
and the same bounds apply to imaginary parts as well; and
\begin{enumerate}
\setcounter{enumi}{3}
\item $|e^{z_1} - e^{\tilde{z_1}}|  \leq  |e^{z_1}|\frac{7 k_1 + 8.5}{2} 2^{-P} $.
\end{enumerate}
Furthermore if $|z_j| \geq 2 k_j 2^{-P}$,
\begin{enumerate}
\setcounter{enumi}{4}
\item $| \Re\left(\frac{z_1}{z_2}\right) - \Re\left(\frac{\tilde{z_1}}{\tilde{z_2}}\right)|  \leq  \left( \frac{6(2 + 2k_1 |z_2| + 2k_2 |z_1|)}{|z_2|^2} + \frac{2(4 + 8 k_2 |z_2|)(2 |z_1||z_2| + 1) + 2}{|z_2|^4} \right) 2^{-P}$
\end{enumerate}
and the same bound applies to the imaginary part, and
\begin{enumerate}
\setcounter{enumi}{5}
\item $|\sqrt{z_1} - \sqrt{\tilde{z_1}}|  \leq  \frac{k_1}{\sqrt{|z_1|}} 2^{-P} $.
\end{enumerate}
\end{theorem}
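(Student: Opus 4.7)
The plan is to prove each of the six bounds by elementary perturbation analysis. Write $\tilde z_j = z_j + \delta_j$ with $|\delta_j|\le k_j 2^{-P}$, which implies $|\Re\delta_j|,|\Im\delta_j|\le k_j 2^{-P}$. For each item I would expand the operation in question, split the error into (i) a propagation part linear in the $\delta_j$, (ii) quadratic cross-terms, and (iii) rounding errors incurred by the elementary floating-point operations used to implement the formula. The hypothesis $k_j \le 2^{P/2}$ is precisely what converts (ii) into an additive constant of at most $2^{-P}$; the small integer constants appearing in each bound (the ``$+2$'', ``$+4$'',\dots) absorb (iii). By symmetry between $\Re$ and $\Im$, bounding the real part suffices.

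For (1), $\Re(\tilde z_1+\tilde z_2)-\Re(z_1+z_2)=\Re\delta_1+\Re\delta_2$, and the claim is immediate. For (2), I expand
\[\Re(\tilde z_1\tilde z_2)-\Re(z_1z_2) \,=\, (x_2\Re\delta_1+x_1\Re\delta_2)-(y_2\Im\delta_1+y_1\Im\delta_2)+\Re\delta_1\Re\delta_2-\Im\delta_1\Im\delta_2.\]
Using $|x_j|,|y_j|\le|z_j|$, the linear part is bounded by $2(k_1|z_2|+k_2|z_1|)2^{-P}$; the quadratic cross-terms contribute at most $k_1k_2\, 2^{-2P}\le 2^{-P}$ by the assumption on $k_j$; and one more $2^{-P}$ covers the rounding errors of the two multiplications and the subtraction, producing the ``$+2$''. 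Part (3) is (2) specialized to $z_1=z_2$, the coefficient $4k_1|z_1|$ arising because both factors are perturbed simultaneously.

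For (4), write $e^{z_1}-e^{\tilde z_1}=e^{z_1}\bigl(1-e^{-\delta_1}\bigr)$; since $|\delta_1|\le k_1 2^{-P}\le 1$, the Taylor expansion yields $|1-e^{-\delta_1}|\le 2|\delta_1|$, giving a propagation contribution of at most $|e^{z_1}|\cdot 2k_1 2^{-P}$. The stated $\frac{7k_1+8.5}{2}$ then results from combining this with the relative accuracy of the MPC exponential routine, which costs a small fixed number of ulps of the output. For (6), I would use the identity $\sqrt{z_1}-\sqrt{\tilde z_1}=(z_1-\tilde z_1)/(\sqrt{z_1}+\sqrt{\tilde z_1})$; the denominator is bounded below by $\sqrt{|z_1|}$ up to a factor negligible under the $k_j\le 2^{P/2}$ hypothesis, yielding the stated bound after an ulp-level contribution from the square root routine.

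The main obstacle is (5). Starting from $\Re(z_1/z_2)=(x_1 x_2+y_1 y_2)/(x_2^2+y_2^2)$, I would apply part (2) to control the numerator, parts (3) and (1) to control the denominator, and then propagate through the division. The hypothesis $|z_j|\ge 2k_j 2^{-P}$ is exactly what guarantees that the perturbed denominator remains $\ge |z_2|^2/2$, so that the first-order expansion $1/(|z_2|^2+\eta)=1/|z_2|^2-\eta/\bigl(|z_2|^2(|z_2|^2+\eta)\bigr)$ is legitimate and produces a correction of order $1/|z_2|^4$. Combining the numerator and denominator bounds through this expansion, and adding ulp-level errors for the final division and the real-part extraction, yields the stated rational expression; the constants $6$ and $2$ in the numerators are precisely what absorb the $|z_2|^2/2$ lower bound and the extra rounding events. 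This bookkeeping is the only delicate step; everything else reduces to mechanical verification.
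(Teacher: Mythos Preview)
The paper does not actually prove this theorem: it simply states that it ``is not very hard to prove'' and refers to an external reference for details. So there is no in-paper proof to compare against, and your elementary perturbation approach is exactly the natural one the authors have in mind.

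Your outline is essentially correct. Two small points are worth tightening. First, in item~(2) you attribute one of the two units in the ``$+2$'' to floating-point rounding, but in fact the quadratic cross-terms alone can reach $2k_1k_2\,2^{-2P}\le 2\cdot 2^{-P}$ (there are two such terms, $\Re\delta_1\Re\delta_2$ and $\Im\delta_1\Im\delta_2$), which already saturates the constant; whether the statement is meant to include machine rounding on top of this is a matter of interpretation, and the specific constants in item~(4) (the $7$ and $8.5$) do suggest that the reference incorporates implementation-level errors as you guessed. Second, in item~(6) your claim that $|\sqrt{z_1}+\sqrt{\tilde z_1}|\ge \sqrt{|z_1|}$ ``up to a negligible factor'' is not automatic: if $z_1$ lies near the branch cut of the principal square root, a small perturbation can send $\sqrt{z_1}$ and $\sqrt{\tilde z_1}$ to nearly opposite points on the imaginary axis, making the sum small. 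One needs either an argument that the hypothesis $|z_1|\ge 2k_1 2^{-P}$ together with a consistent branch choice rules this out, or a convention (implicit in the paper's applications) that the arguments stay away from the cut. Apart from these caveats your sketch is sound.
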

This theorem is not very hard to prove; we refer to \cite{absolutelossofprec} for details.
\begin{proof}[Proof of Theorem~\ref{naiveanalysis}]
We first determine the size of the quantities we are manipulating; this is needed to evaluate the error incurred during the computation, as well as the number of bits needed to store fixed-point approximations of absolute precision $P$ of the intermediate quantities. Taking $B=1$ in Proposition~\ref{naiveboundtheta} gives $|\theta(z,\tau)-1| \leq 3$, so $|\theta(z,\tau)| \leq 4$; actually, this also proves $|S_B(z,\tau)|\leq 4$, which means that $|\theta_{0,z}|, |\theta_{1,z}|, |\theta_{0,0}|, |\theta_{1,0}|$ are bounded by 4. We also have $|q| \leq 0.07$, and $|q_2| \leq |q|^{n^2} \leq |q|^n = |q_1| \leq |q| \leq 0.07$. As for the $v_i$, we have $v_0 = 2$, and for $n \geq 1$
\begin{equation*}
|v_n| \leq |q|^{n^2+n} + |q|^{n^2-n} \leq (1+|q|^{2n}) q^{n^2-n} \leq 1.0049 q^{n^2-n} \leq 1.0049
\end{equation*}
Hence, storing all the complex numbers above, including our result, with absolute precision $P$ only requires $P+2$ bits, since their integral part is coded on only 2 bits. Note that, had we computed $e^{-2i \pi n z}$ before multiplying it by $q^{n^2}$, we would have needed $O(\Im(\tau))$ more bits, which worsens the asymptotic complexity.

Computing the absolute precision lost during this computation is done using Theorem~\ref{theoremlossofprec}. We start with the bounds $|\tau - \tilde{\tau}| \leq \frac{1}{2} 2^{-\mathcal{P}}$ and $|z - \tilde{z}| \leq \frac{1}{2} 2^{-\mathcal{P}}$, coming from the hypothesis that the approximations of $z$ and $\tau$ are correctly rounded with precision $\mathcal{P}$. We then need to estimate $k_{v_1}$ and $k_q$, which can be done using the formula giving the absolute error when computing an exponential from Theorem~\ref{theoremlossofprec}. Given that $\tau \in \mathcal{F}$, we have
\begin{eqnarray*}
| q - \tilde{q} | & \leq & 0.07 \frac{7 \times 1/2 + 8.5}{2} 2^{-P} \leq 0.42 \times 2^{-P} \\
| v_1 - \tilde{v_1}| & \leq & 6 ( |e^{-\pi(\Im(\tau)+2\Im(z))}| + |e^{\pi(2\Im(z)-\Im(\tau))}| ) \times 2^{-P} \\
& \leq & 6 (|q|+1) 2^{-P} \leq 6.42 \times 2^{-P}
\end{eqnarray*}
which means that $k_q \leq 0.42$ and $k_{v_1} \leq 6.42$. We then need to evaluate the loss of precision for each variable and at each step of the algorithm, which gives recurrence relations with non-constant coefficients. Solving those is rather tedious, and we use loose upper bounds to simplify the computation; we do not detail this proof in the present article. The results obtained by this method show that the error on the computation of the theta-constants is bounded by $(0.3B + 105.958)2^{-\mathcal{P}}$, and the one on the computation of the theta function is smaller than $(5.894 B + 28.062)2^{-\mathcal{P}}$. This proves that the number of bits lost is bounded by $\log_2 B + c$, where $c$ is a constant smaller than 7; hence we set $\mathcal{P} = P + \log B + 7$.

Finally, evaluating $\pi$ and $\exp(z)$ with precision $\mathcal{P}$ can be done in $O(\mathcal{M}(\mathcal{P}) \log \mathcal{P})$~\cite{Borwein}, but this is negligible asymptotically. In the end, computing an approximation up to $2^{-P}$ of $\theta(z, \tau)$ can be done in $O\left(\mathcal{M}\left(P + \log(P/\Im(\tau)) + c \right) \sqrt{\frac{P}{\Im(\tau)}} \right) = O\left(\mathcal{M}(P) \sqrt{\frac{P}{\Im(\tau)}}\right)$ bit operations.
\end{proof}

\subsubsection{\texorpdfstring{Computing $\theta_{10}$}
		{Computing theta10}
}
\label{section:theta10naive}

We mentioned in section~\ref{section:argumentreduction} the need to compute $\theta_{10}(z,\tau)$ and $\theta_{10}(0, \tau)$ as well. One could think of recovering those values using Jacobi's quartic formula and the equation of the variety:
\begin{eqnarray}
\theta_{00}(0,\tau)^4 &=& \theta_{01}(0,\tau)^4 + \theta_{10}(0,\tau)^4 \label{eq:Jacobi} \\
\theta_{00}^2(z,\tau) \theta_{00}^2(0,\tau) &=& \theta_{01}^2(z,\tau) \theta_{01}^2(0,\tau) + \theta_{10}^2(z,\tau) \theta_{10}^2(z,\tau) \label{eq:Variety}
\end{eqnarray}
that is to say, compute
\begin{eqnarray*}
\theta_{10}(0, \tau) &=& \left( \theta_{00}(0,\tau)^4 - \theta_{01}(0,\tau)^4 \right)^{1/4} \\
\theta_{10}(z, \tau) &=& \frac{\sqrt{\theta_{00}^2(z,\tau) \theta_{00}^2(0,\tau) - \theta_{01}^2(z,\tau) \theta_{01}^2(0,\tau)} }{\theta_{10}(0,\tau)}.
\end{eqnarray*}
However, this approach induces an asymptotically large loss of absolute precision for both $\theta_{10}(0,\tau)$ and $\theta_{10}(z,\tau)$. According to Theorem~\ref{theoremlossofprec}, both square root extraction and inversion induce a loss of precision proportional to $|z|^{-1}$; since $\theta_{10}(0,\tau) \sim 4 q^{1/2}$, the number of bits lost by applying those formulas is $O(\Im(\tau))$. Note that those formulas would also induce a big loss in relative precision; since $\theta_{00}(0,\tau)$ and $\theta_{01}(0,\tau)$ are very close when $\Im(\tau)$ goes to infinity, the subtraction induces a relative precision loss of $O(\Im(\tau))$ bits (for more details, see \cite[Section~6.3]{Dupont}). Either of those analyses show that, in order to compensate precision loss, the naive algorithm should actually be run with a precision of $O(P + \log B + \Im(\tau))$, which gives a running time that worsens, insteads of getting better, when $\Im(\tau)$ gets big. We do not recommend this approach.

Instead, one should compute partial summations of the series defining $\theta_{10}$, much in the same way as we did for $\theta(z,\tau)$. We outline the analysis in this case, which is very similar to the one for $\theta$: supposing $n \geq 2$, we have $n^2-2n \geq (n-2)^2$, which can be used to prove that $|\theta_{10}(z,\tau) - S_B| \leq 3|q|^{(B-2)^2}$, so that the bound on $B$ is thus just one more than for $\theta$; the recurrence relation is the same; $q^{2n}|v_1|$ is bounded by 2 instead of 1, which in the worst case means $\log B$ more guard bits are needed. In what follows, we will refer to this algorithm as ``the naive algorithm to compute $\theta_{10}(0, \tau), \theta_{10}(z,\tau)$''; its asymptotic complexity is, just like Algorithm~\ref{naive}, $O\left(\mathcal{M}(P) \sqrt{\frac{P}{\Im(\tau)}}\right)$ bit operations, which gets better as $\Im(\tau)$ increases.

We note that similar considerations apply to the problem of computing $\theta_{11}$. One can compute $\theta_{11}(z,\tau)$ using the formula~\cite[p.22]{Mumford1}
\begin{equation} \label{eq:recoverTheta11}
\theta_{11}(z,\tau)^2 = \frac{\theta_{01}(z,\tau)^2 \theta_{10}(0,\tau)^2 - \theta_{10}(z,\tau)^2 \theta_{01}(0,\tau)^2}{\theta_{00}(0,\tau)^2}.
\end{equation}
Using this formula loses only a few bits of precision since $\theta_{00}(0,\tau)$ is bounded; however, one then needs to compute a square root, which potentially loses $O(\Im(\tau))$ bits. Hence, a summation of the series, which directly gives $\theta_{11}$, is preferable.

\subsection{Fast computation of theta-constants}
\label{section:AGMthetaconstants}

Recall the definition of the arithmetico-geometric mean (AGM) for two positive real numbers $a,b$:
\begin{eqnarray} \label{eq:AGM}
a_0 = a, && b_0 = b \notag \\
a_{n+1} = \frac{a_n + b_n}{2} && b_{n+1} = \sqrt{a_n b_n}
\end{eqnarray}
The sequences $(a_n)_{n \in \mathbb{N}}$ and $(b_n)_{n \in \mathbb{N}}$ both converge to the same limit, called the \textbf{arithmetico-geometric mean} of $a$ and $b$. Furthermore, $(a_n)$ and $(b_n)$ are quadratically convergent, in the sense of the following definition:
\begin{definition}
A sequence $(a_n)$ is said to be \textbf{quadratically convergent} (to a limit $\ell$) if there is a $C > 0$ such that for $n$ large enough:
\begin{equation*}
|a_{n+1} - a_n | \leq C |a_n - a_{n-1}|^2
\end{equation*}
\end{definition}
The constant $C$ in the case of the AGM can be taken as $\frac{\pi}{8 \min(|a|,|b|)}$~\cite[Thm.~1]{Dupont}. Quadratic convergence implies that the number of exact digits approximately doubles with each iteration, so that one only needs $O(\log P)$ iterations to compute $\AGM(a,b)$ with precision $P$; hence the total cost of computing $\AGM(a,b)$ is $O(\mathcal{M}(P) \log P)$ bit operations.

It is possible to generalize the AGM to complex numbers, but there are two possibilities for the choice of the square root at each step. We then call \textbf{an AGM sequence for $\bm{a}$ and $\bm{b}$} any sequence $(a_n, b_n)_{n \in \mathbb{N}}$ such that
\begin{equation*}
a_0=a, \quad b_0=b, \qquad 2a_{n+1} = a_n+b_n, \quad b_{n+1}^2 = a_n b_n
\end{equation*}
Note that there are uncountably many AGM sequences for $a, b$. We define unambiguously \textit{the} AGM of two complex numbers following~\cite{Cox}:
\begin{proposition}
Let $a,b \in \mathbb{C}$ and let $(a_n, b_n)_{n \in \mathbb{N}}$ be an AGM sequence for $a$ and $b$. We say that the choice of signs is \textbf{good} at the rank $n$ if
\begin{equation*}
| a_n - b_n | < |a_n + b_n| \qquad or \quad |a_n-b_n| = |a_n+b_n| \text{ and } \Im\left(\frac{b_n}{a_n}\right) > 0
\end{equation*}
We call the AGM sequence for $a$ and $b$ in which all the choices of signs are good the \textbf{optimal AGM sequence}, and define $\AGM(a,b)$ as the limit of the optimal AGM sequence for $a$ and $b$.
\end{proposition}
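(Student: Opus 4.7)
The plan is to establish the two claims that are implicit in the statement: (i) at every step there is a unique \emph{good} choice of square root, so the optimal AGM sequence is well-defined; and (ii) the resulting sequence converges (quadratically) to a single limit, so that $\AGM(a,b)$ makes sense.

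For (i), I would observe that $|a_n + b_n|^2 - |a_n - b_n|^2 = 4\Re(a_n \overline{b_n})$, so the inequality $|a_n - b_n| < |a_n + b_n|$ is equivalent to $\Re(b_n/a_n) > 0$ (and $a_n \neq 0$ is preserved as long as the sequence does not degenerate to the zero case). The two candidates for $b_n$ are negatives of each other, hence produce opposite values of $b_n/a_n$, with opposite real parts; exactly one satisfies the strict inequality unless $\Re(b_n/a_n) = 0$, in which case the imaginary parts are opposite and the tiebreaker singles out one choice. Uniqueness of the good sequence is thereby established.

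For (ii), I would use the classical identity $a_{n+1} - b_{n+1} = (\sqrt{a_n} - \sqrt{b_n})^2/2$, valid provided the square roots are chosen so that $\sqrt{a_n}\sqrt{b_n} = b_{n+1}$. Dividing $(\sqrt{a_n} - \sqrt{b_n})(\sqrt{a_n} + \sqrt{b_n}) = a_n - b_n$, squaring and taking moduli yields
\begin{equation*}
|a_{n+1} - b_{n+1}| \;=\; \frac{|a_n - b_n|^2}{2\,|\sqrt{a_n}+\sqrt{b_n}|^2}.
\end{equation*}
Combined with $a_{n+1} - a_n = (b_n - a_n)/2$, if I can bound $|\sqrt{a_n} + \sqrt{b_n}|$ below by a positive multiple of $\sqrt{\max(|a_n|,|b_n|)}$, then $(a_n)$ and $(b_n)$ are Cauchy, share a common limit, and the convergence is quadratic with a constant controlled by $\min(|a|,|b|)^{-1}$.

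The hard part is confirming that lower bound uniformly across all iterates, and this is where the good-choice condition earns its keep. Rewriting it as $\Re(\sqrt{b_n/a_n}) \geq 0$ with the principal branch (the tiebreaker covering the equality case) places $\sqrt{b_n/a_n}$ in the closed right half-plane, whence $|1 + \sqrt{b_n/a_n}| \geq 1$ and therefore $|\sqrt{a_n}+\sqrt{b_n}| \geq |\sqrt{a_n}|$; the symmetric argument applied to $\sqrt{a_n/b_n}$ gives $|\sqrt{a_n}+\sqrt{b_n}| \geq |\sqrt{b_n}|$. Propagating this through all iterates, and handling the degenerate tie case via the $\Im(b_n/a_n) > 0$ clause, is the main technical point; once it is dispatched, the displayed identity gives quadratic convergence to a common limit, exactly as in~\cite{Cox}, and $\AGM(a,b)$ is unambiguously defined.
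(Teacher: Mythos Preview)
The paper does not actually prove this proposition: it is stated as a definition (following~\cite{Cox}), with no proof environment attached. The two implicit claims you identify --- well-definedness of the good choice at each step, and existence of the limit --- are not argued in the paper; convergence is instead deferred to the \emph{next} proposition, which is simply cited as \cite[Proposition~2.1]{Cox}. So there is nothing in the paper to compare your argument against.

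That said, your outline is essentially the standard Cox argument and is sound as far as it goes. A couple of remarks. In part~(i), the condition you want is at rank~$n$ \emph{after} choosing $b_n$, and you are right that the two candidates $\pm b_n$ give opposite values of $b_n/a_n$; you should also note the genuinely degenerate case $a_n=0$ (i.e.\ $a_{n-1}=-b_{n-1}$), which the definition tacitly excludes. In part~(ii), your key observation is correct but deserves to be stated carefully: it is the good-choice condition at rank~$n+1$ (not $n$) that translates into $|\sqrt{a_n}-\sqrt{b_n}|<|\sqrt{a_n}+\sqrt{b_n}|$, hence $\Re(\sqrt{b_n}/\sqrt{a_n})>0$, for the particular square roots satisfying $\sqrt{a_n}\sqrt{b_n}=b_{n+1}$. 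From this $|\sqrt{a_n}+\sqrt{b_n}|\geq\sqrt{\max(|a_n|,|b_n|)}$ follows as you say. The remaining gap you yourself flag --- a uniform positive lower bound on $\max(|a_n|,|b_n|)$ through all iterates --- is exactly the substance of Cox's Proposition~2.1, and you are right that it is the nontrivial step; your sketch does not supply it, but neither does the paper.
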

Finally we have the following proposition:
\begin{proposition}[{\protect \cite[Proposition~2.1]{Cox}}]
Let $(a_n, b_n)_{n \in \mathbb{N}}$ be an AGM sequence for $a,b$:
\begin{itemize}
\item If $(a_n, b_n)$ has infinitely many bad choices of sign, $\lim_{n \rightarrow \infty} a_n = 0$ and the convergence is at least linear;
\item If $(a_n, b_n)$ has only finitely many bad choices of sign (for instance if it is optimal), $(a_n)$ and $(b_n)$ both converge quadratically to the same non-zero limit.
\end{itemize}
\end{proposition}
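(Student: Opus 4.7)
The whole argument flows from the identity
\[
4(a_{n+1}-b_{n+1})(a_{n+1}+b_{n+1}) = (a_n-b_n)^2,
\]
obtained by expanding $a_{n+1}^2 - b_{n+1}^2 = (a_n+b_n)^2/4 - a_n b_n$, together with the polarization formulas $|a_n\pm b_n|^2 = |a_n|^2 + |b_n|^2 \pm 2\Re(\bar{a_n}b_n)$. In particular, the good-choice condition $|a_n-b_n|\leq|a_n+b_n|$ is exactly $\Re(\bar{a_n}b_n)\geq 0$.

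Consider first the case of finitely many bad choices, so that all choices from some rank $N$ on are good. The identity then gives $|a_{n+1}-b_{n+1}|\leq |a_n-b_n|/2$, so the differences decay geometrically; combined with $|a_{n+1}-a_n|=|a_n-b_n|/2$, this makes $(a_n)_{n\geq N}$ Cauchy with some limit $\ell$, and $b_n = 2a_{n+1}-a_n\to\ell$ too. Rewriting the identity as
\[
|a_{n+1}-b_{n+1}| = \frac{|a_n-b_n|^2}{4|a_{n+1}+b_{n+1}|}
\]
yields the quadratic rate as soon as $|a_{n+1}+b_{n+1}|$ is eventually bounded below, which happens exactly when $\ell\neq 0$.

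The technical heart is therefore to prove $\ell\neq 0$; this is the step I expect to take the most care. Introduce $S_n = |a_n|^2+|b_n|^2$; a direct polarization computation gives
\[
S_n - S_{n+1} = \frac{|a_n-b_n|^2}{4} + \frac{(|a_n|-|b_n|)^2}{2} \leq \frac{3|a_n-b_n|^2}{4}.
\]
Summing the resulting geometric tail with $|a_n-b_n|\leq |a_N-b_N|/2^{n-N}$ yields $S_N-S_\infty \leq |a_N-b_N|^2$, so $S_\infty \geq S_N - |a_N-b_N|^2 = 2\Re(\bar{a_N}b_N)$. In the strict good case this last quantity is positive, whence $|\ell|^2 = S_\infty/2 > 0$. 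The boundary case $|a_N-b_N|=|a_N+b_N|$ does not persist (a direct check shows that one more AGM step moves strictly into the interior), so it can be sidestepped by passing from $N$ to $N+1$.

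Finally, in the case of infinitely many bad choices, the same expansion $S_{n+1} = S_n/4 + \Re(\bar{a_n}b_n)/2 + |a_n||b_n|$ combined with $\Re(\bar{a_n}b_n)\leq |a_n||b_n|\leq S_n/2$ shows $S_{n+1}\leq S_n$ at every step, while at a bad step $\Re(\bar{a_n}b_n)\leq 0$ gives the strict contraction $S_{n+1}\leq \tfrac{3}{4} S_n$. Hence infinitely many bad steps force $S_n\to 0$, so both $a_n$ and $b_n$ tend to $0$ with at least linear rate, as claimed.
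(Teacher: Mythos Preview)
The paper does not prove this proposition: it is stated with a citation to Cox's paper and no proof is given in the text, so there is nothing to compare your argument against at the level of approach.

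On its own merits your argument is sound for the main claims. The identity $4(a_{n+1}^2-b_{n+1}^2)=(a_n-b_n)^2$ and the polarization-based computation $S_n-S_{n+1}=\tfrac14|a_n-b_n|^2+\tfrac12(|a_n|-|b_n|)^2$ are both correct, and your telescoping bound $S_\infty\geq 2\Re(\bar a_N b_N)$ is a clean way to get $\ell\neq 0$ in the strictly good case. The boundary step (one application of $F$ moves off the locus $\Re(\bar a b)=0$) is asserted rather than checked, but it does hold: writing $b_N=ita_N$ with $t>0$ one finds $(b_{N+1}/a_{N+1})^2=4it/(1+it)^2$ has strictly positive real part $8t^2/(1+t^2)^2$, so $b_{N+1}/a_{N+1}$ is never purely imaginary.

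The one genuine loose end is the phrase ``at least linear'' in the infinitely-many-bad-choices case. What you actually prove is that $S_n$ is non-increasing and drops by a factor $3/4$ at each bad step, hence $S_n\to 0$. That gives $a_n,b_n\to 0$, but it does not by itself give a linear rate in $n$: if the bad indices are very sparse (say $n_k=2^k$) your bound only yields $S_n\leq(3/4)^{O(\log n)}S_0$, which is sub-geometric. Either sharpen the estimate (for instance, show that good steps also contract $S_n$ strictly unless $a_n=b_n$, and control what happens in that degenerate case), or soften the conclusion to ``$a_n\to 0$'' and note that this is all the paper actually uses.
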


The link between the complex AGM and theta-constants is well-known:
\begin{proposition}
\label{propTauduplication0}
We have the following formulas linking theta-constants:
\begin{eqnarray}
\theta_{00}^2(0, 2\tau) &=& \frac{\theta_{00}^2(0, \tau) + \theta_{01}^2(0, \tau)}{2} \label{eq:TauduplicationConstant00} \\
\theta_{01}^2(0, 2\tau) &=& \theta_{00}(0, \tau) \theta_{01}(0, \tau) \label{eq:TauduplicationConstant01}
\end{eqnarray}
\end{proposition}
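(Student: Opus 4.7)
The plan is to prove both identities by expanding the left-hand sides as double $q$-series (with $q = e^{i\pi\tau}$) and applying the lattice change of variables $(u,v) = (n+m,\,n-m)$, which exploits the identity $n^2+m^2 = (u^2+v^2)/2$. This map is a bijection from $\mathbb{Z}^2$ onto the sublattice $\{(u,v)\in\mathbb{Z}^2 : u \equiv v \pmod 2\}$, so the sum on the right-hand side naturally splits into the ``both even'' and ``both odd'' contributions, which each become a double series in $q^2$. This is precisely what is needed to recognize a theta-constant evaluated at $2\tau$.

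For equation~\eqref{eq:TauduplicationConstant00}, I would write
\[
\theta_{00}^2(0,\tau) + \theta_{01}^2(0,\tau) = \sum_{n,m \in \mathbb{Z}} \bigl(1+(-1)^{n+m}\bigr)\, q^{n^2+m^2}.
\]
The weight $1+(-1)^{n+m}$ kills all terms with $n\not\equiv m \pmod 2$, equivalently it restricts to $(u,v)$ both even. Substituting $u=2a$, $v=2b$ transforms the remaining sum into $2\sum_{a,b \in \mathbb{Z}} q^{2(a^2+b^2)} = 2\,\theta_{00}^2(0,2\tau)$, and the identity follows after dividing by two.

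For equation~\eqref{eq:TauduplicationConstant01}, I would expand
\[
\theta_{00}(0,\tau)\,\theta_{01}(0,\tau) = \sum_{n,m \in \mathbb{Z}} (-1)^m\, q^{n^2+m^2},
\]
note that under the change of variables $(-1)^m = (-1)^{(u-v)/2}$, and split according to parity. In the even case $u=2a$, $v=2b$, the contribution factors as $\sum_{a,b}(-1)^{a-b}q^{2(a^2+b^2)} = \bigl(\sum_a (-1)^a q^{2a^2}\bigr)^2 = \theta_{01}^2(0,2\tau)$, which is exactly the desired right-hand side. In the odd case $u=2a+1$, $v=2b+1$, a short computation gives $(u^2+v^2)/2 = 2a(a+1)+2b(b+1)+1$, so the contribution equals $q\bigl(\sum_{a}(-1)^a q^{2a(a+1)}\bigr)^2$.

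The main (and only) subtlety is then showing this odd contribution vanishes. I would use the involution $a \mapsto -1-a$ on $\mathbb{Z}$: the exponent $a(a+1) = (-1-a)(-a)$ is invariant while $(-1)^a$ flips sign, so the inner sum equals its own negative and hence is zero. Everything else is bookkeeping of parities in the change of variables; once the combinatorial identification of the four parity classes is clear, the two formulas drop out immediately. An alternative route would be via Jacobi's infinite product expansions, but the Fourier-series approach sketched above stays entirely within the framework already set up in Section~2.1.
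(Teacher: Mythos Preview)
Your argument is correct and complete. The paper itself does not give a proof for this proposition (it is presented as well-known), but for the more general Proposition~\ref{propTauduplicationZ} it sketches exactly the same lattice change of variables $(n,m)\mapsto(n+m,n-m)$ that you carry out in detail; your treatment of the odd-parity contribution in~\eqref{eq:TauduplicationConstant01} via the involution $a\mapsto -1-a$ is a clean way to finish, and is not spelled out in the paper.
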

This shows that $(\theta_{00}^2(0, 2^n \tau), \theta_{01}(0, 2^n \tau))_{n \in \mathbb{N}}$ is an AGM sequence for $\theta_{00}^2(0, \tau)$ and $\theta_{01}^2(0, \tau)$, and it converges quadratically to 1. Whether or not this sequence is the optimal AGM sequence is controlled by the following result:
\begin{proposition}[{\protect\cite[Theorem~2]{Dupont}} or {\protect\cite[Lemma 2.9]{Cox}}]
Define
\begin{equation*}
\mathcal{F}_{k'} = \left\{ \tau \in \mathcal{H} \text{ such that } |\Re(\tau)|<1, \left|\tau + \frac{3}{4}\right| \geq \frac{1}{4}, \left|\tau + \frac{1}{4}\right| > \frac{1}{4}, \left|\tau - \frac{1}{4}\right| \geq \frac{1}{4}, \left|\tau - \frac{3}{4}\right| > \frac{1}{4} \right\} \subset \mathcal{H}
\end{equation*}
Let $\tau \in \mathcal{F}_{k'}$, and let $(a_n, b_n)_{n \in \mathbb{N}}$ the optimal AGM sequence for $\theta_{00}^2(0, \tau)$ and $\theta_{01}^2(0, \tau)$. Then for all $n$ we have $(a_n, b_n) = (\theta_{00}^2(0, 2^n \tau), \theta_{01}^2(0, 2^n \tau))$, which implies $\AGM(\theta_{00}^2(0, \tau), \theta_{01}^2(0, \tau)) = 1$.
\end{proposition}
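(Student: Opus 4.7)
The plan is to separate the claim into three ingredients: $(i)$ that $(\theta_{00}^2(0, 2^n\tau), \theta_{01}^2(0, 2^n\tau))$ is an AGM sequence for the prescribed initial data, $(ii)$ that the implicit choice of square root at each step is the \emph{good} one when $\tau \in \mathcal{F}_{k'}$, so that this sequence coincides with the optimal AGM sequence, and $(iii)$ that the common limit is $1$.

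Ingredient $(i)$ is immediate from Proposition~\ref{propTauduplication0}: equation~\eqref{eq:TauduplicationConstant00} is the arithmetic mean step, while equation~\eqref{eq:TauduplicationConstant01} is the geometric mean step, with $\theta_{01}^2(0, 2\tau)$ serving as a definite square root of $\theta_{00}^2(0,\tau)\theta_{01}^2(0,\tau)$. Ingredient $(iii)$ follows from quadratic convergence of the optimal AGM combined with the observation that $\Im(2^n\tau) \to \infty$; hence $|e^{i\pi 2^n\tau}| \to 0$ and the $q$-expansion of $\theta_{00}^2(0, 2^n\tau)$ converges to its constant term $1$.

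The core of the proof is ingredient $(ii)$. From Jacobi's quartic identity applied at $2\tau$, a short computation yields the dual duplication formula $\theta_{10}^2(0, 2\tau) = \pm(\theta_{00}^2(0,\tau) - \theta_{01}^2(0,\tau))/2$, with the sign pinned down by the leading terms of the $q$-expansions. Writing $a_n = \theta_{00}^2(0, 2^n\tau)$ and $b_n = \theta_{01}^2(0, 2^n\tau)$, one has $a_n - b_n = 2\theta_{10}^2(0, 2^{n+1}\tau)$ and $a_n + b_n = 2\theta_{00}^2(0, 2^{n+1}\tau)$, so the good-sign condition $|a_n-b_n| < |a_n+b_n|$ is exactly the inequality $|k(2^{n+1}\tau)| < 1$ for the elliptic modulus $k = \theta_{10}^2/\theta_{00}^2$, with the equality case handled by the supplementary condition on $\Im(b_n/a_n)$. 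It then remains to verify that $\mathcal{F}_{k'}$ is arranged so that every iterate $2^{n+1}\tau$ lies in the good region: the four excluded half-disks of radius $1/4$ centered at $\pm 1/4, \pm 3/4$ are precisely the preimages under $\tau \mapsto 2\tau$ (after reduction modulo the translations $\tau \mapsto \tau \pm 1$, which leave the theta-constants unchanged up to harmless factors) of the region $|k(\tau)| > 1$; stability under further doubling then follows by induction, since $\Im(2^n\tau)$ only grows.

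This geometric verification is the main obstacle: one must check each boundary circle case by case and track exactly when the boundary inequalities are strict versus non-strict, in order to match the tie-breaking clause in the definition of a good choice. I would not redo this from scratch but refer to the detailed case analysis in~\cite{Cox}, which is precisely the content of \cite[Lemma 2.9]{Cox}.
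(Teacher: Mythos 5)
The paper offers no proof of this proposition; it is cited directly from Dupont and Cox, so there is no in-house argument to compare yours against. Your decomposition into three ingredients is sound, and ingredients $(i)$ and $(iii)$ are both correct and essentially trivial given Proposition~\ref{propTauduplication0} and the $q$-expansion. The reduction in $(ii)$, via the identity $a_n - b_n = 2\theta_{10}^2(0,2^{n+1}\tau)$ (Equation~\eqref{eq:Tauduplication10} at $z=0$) and $a_n + b_n = 2\theta_{00}^2(0,2^{n+1}\tau)$, is a correct and clean way to recast the good-sign criterion as $|k(2^{n+1}\tau)| \leq 1$.

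The gap is in the induction you wave at in $(ii)$. The ``preimage under $\tau\mapsto 2\tau$ of $\{|k|>1\}$'' description, even if accurate, only certifies the good choice at rank $n=0$. For the subsequent ranks you claim that ``stability under further doubling follows by induction, since $\Im(2^n\tau)$ only grows.'' That is not sufficient: $\mathcal{F}_{k'}$ contains points with arbitrarily small imaginary part (between the boundary half-disks, near the real axis), and a single doubling does not push those into the regime $\Im(\tau) \gg 1$ where $|k(\tau)|<1$ is automatic. What is actually required is the non-obvious geometric fact that $\mathcal{F}_{k'}$ is stable under $\tau\mapsto 2\tau$ up to the period-$2$ translation $\tau\mapsto\tau\pm 2$ on theta-constants, so that the condition $\Re(k'(2^n\tau))\geq 0$ (with the correct handling of boundary equalities) propagates for all $n$. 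That stability statement, proved by a circle-by-circle case analysis, is the real content of Cox's Lemma~2.9, and your proof as written substitutes an assertion (``follows by induction'') for it rather than deferring to Cox at that specific point. Since the paper itself defers to the literature for this proposition, deferral is acceptable, but you should locate the deferral at the stability-under-doubling step rather than suggesting it is automatic from the growth of $\Im(2^n\tau)$.
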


In~\cite[Algorithm~4]{Dupont}, an algorithm relying on the AGM, and with complexity $O(\mathcal{M}(P) \log P)$, is given to compute the value of the theta-constants with precision $P$ bits. The algorithm uses the fact that $k'(\tau) = \frac{\theta_{01}^2(0, \tau)}{\theta_{00}^2(0, \tau)}$ is a solution to the following equation:
\begin{equation*} \label{eq:newtonkprime}
i \AGM(1, z) - \tau \AGM\left(1, \sqrt{1 - z^2}\right) = 0
\end{equation*}
which is a consequence of the action of $SL_2(\mathbb{Z})$ on the theta-constants, as well as of Jacobi's quartic formula (Equation~\eqref{eq:Jacobi}). Newton's method, when given an approximation of $k'(\tau)$ with precision $P/2$ as well as the knowledge of $\tau$ with precision $P$, computes an approximation of $k'(\tau)$ with precision $P-\delta$, where $\delta$ is a small constant. If one carries out Newton's method while doubling the working precision with each iteration, it is asymptotically only as costly as the last iteration; this means $k'(\tau)$ can be computed with precision $P$ in quasi-optimal running time. One can then recover the individual values of the theta-constants using the equation
\begin{equation} \label{eq:AGMQuotients}
\AGM\left(1, \frac{\theta_{01}(0, \tau)^2}{\theta_{00}(0, \tau)^2} \right) = \frac{1}{\theta_{00}(0, \tau)^2}.
\end{equation}

However, the complexity of this algorithm is not uniform -- that is to say, it reaches this complexity only for $\tau$ within a compact set. A variant of the algorithm is proposed in~\cite[Algorithm~5]{Dupont} which makes the complexity uniform: if $P \leq 2 \log \Im(\tau)$, use the naive algorithm (which gives the right complexity); if not, compute the value of the theta-constants at $\frac{\tau}{2^n}$ for some $n \leq \log \Im(\tau)$, and use the AGM to compute the theta-constants at $\tau$. This gives an algorithm which complexity does not depend on $\tau$.

\section{A sequence related to \texorpdfstring{$\theta$-functions}{theta-functions}}
\label{section:aboutF}

\subsection{Definition of the F sequence}

We start with the following formula:
\begin{proposition}
\label{propTauduplicationZ}
\begin{eqnarray}
\theta_{00}(z, 2\tau)^2 &=& \frac{\theta_{00}(z,\tau) \theta_{00}(0,\tau) + \theta_{01}(z, \tau) \theta_{01}(0, \tau)}{2} \label{eq:Tauduplication00}  \\
\theta_{01}(z, 2\tau)^2 &=& \frac{\theta_{00}(z,\tau) \theta_{01}(0,\tau) + \theta_{01}(z, \tau) \theta_{00}(0, \tau)}{2} \label{eq:Tauduplication01}
\end{eqnarray}
\end{proposition}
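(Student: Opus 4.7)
The plan is to prove both identities by expanding the right-hand sides as double series in the definition of $\theta$ and then performing an index change that factors the resulting sum into a product, which will be recognized as a theta-value at $2\tau$.

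First, I would write out the definitions
\begin{equation*}
\theta_{00}(z,\tau) = \sum_{m \in \mathbb{Z}} e^{\pi i \tau m^2 + 2\pi i m z}, \qquad \theta_{01}(z,\tau) = \sum_{m \in \mathbb{Z}} (-1)^m e^{\pi i \tau m^2 + 2\pi i m z},
\end{equation*}
multiply out the two products that appear in the numerator of the right-hand side of \eqref{eq:Tauduplication00}, and collect terms. This gives a double sum over $(m,n) \in \mathbb{Z}^2$ of $\bigl(1 + (-1)^{m+n}\bigr) e^{\pi i \tau (m^2+n^2) + 2\pi i m z}$, so only pairs with $m+n$ even survive, with a factor of $2$. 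For \eqref{eq:Tauduplication01} the same manipulation gives $\bigl((-1)^m + (-1)^n\bigr)$ in front, which again forces $m+n$ even and contributes $2(-1)^m$.

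Next, I would perform the change of indices $m = p+q$, $n = p-q$, which is a bijection from $\mathbb{Z}^2$ onto the sublattice $\{(m,n) : m+n \text{ even}\}$. Under this substitution $m^2 + n^2 = 2p^2 + 2q^2$ and $m = p+q$, so the summand becomes separable:
\begin{equation*}
e^{\pi i \tau (m^2+n^2) + 2\pi i m z} \;=\; e^{2\pi i \tau p^2 + 2\pi i p z}\cdot e^{2\pi i \tau q^2 + 2\pi i q z}.
\end{equation*}
The double sum therefore factors as a product of two identical one-dimensional sums, each equal to $\theta_{00}(z,2\tau)$, establishing \eqref{eq:Tauduplication00} after dividing by $2$. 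For \eqref{eq:Tauduplication01}, the extra factor $(-1)^m = (-1)^{p+q} = (-1)^p (-1)^q$ splits across the product, giving $\theta_{01}(z,2\tau)^2$.

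The main obstacle is purely bookkeeping: verifying that the substitution $(p,q) \mapsto (p+q, p-q)$ is indeed a bijection onto the even-sum sublattice and handling the absolute convergence needed to rearrange the double series. Since $\Im(\tau) > 0$ guarantees absolute convergence of both series uniformly on compact subsets in $z$, Fubini's theorem applies, and the rearrangement is legitimate. No further analytic subtlety is needed; the formulas are essentially the two-variable analogue of the theta-constant duplication formulas from Proposition~\ref{propTauduplication0}, which correspond to specializing $z = 0$.
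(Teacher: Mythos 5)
Your proof is correct and follows essentially the same route as the paper, which cites the change-of-basis formula from Cosset's thesis but notes that a direct proof goes via the series definitions and exactly the index reorganization $\sum_{m+n \equiv 0 \pmod 2} q^{m^2+n^2} = \sum_{i,j} q^{(i+j)^2+(i-j)^2}$ that you use. The only difference is that you spell out the bookkeeping (the $(1+(-1)^{m+n})$ and $((-1)^m + (-1)^n)$ selectors, the separability of the summand, the Fubini justification) which the paper leaves implicit.
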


This formula is called in \cite[formula 3.13, p.39]{CossetPhd} the change of basis formula from the $\mathcal{F}_2$ basis to the $\mathcal{F}_{(n,2)^2}$ basis. However a direct proof can be obtained with limited effort, using the series defining $\theta$ and some manipulations and term reorganization akin to
\begin{equation*}
\sum_{n + m \equiv 0 \pmod{2}} q^{n^2 + m^2} = \sum_{i, j \in \mathbb{Z}} q^{(i+j)^2 + (i-j)^2} .
\end{equation*}
We also note that one can similarly prove the following formula, which will be used in section~\ref{section:uniformalgorithm}:
\begin{equation} \label{eq:Tauduplication10}
\theta_{10}(z, 2\tau)^2 = \frac{\theta_{00}(z,\tau) \theta_{00}(0,\tau) - \theta_{01}(z,\tau) \theta_{01}(0,\tau)}{2}
\end{equation}

We then define the following function:
\begin{eqnarray*}
F : \mathbb{C}^4 & \rightarrow & \mathbb{C}^4 \\
(x,y,z,t) & \mapsto & \left( \frac{\sqrt{x} \sqrt{z} + \sqrt{y} \sqrt{t}}{2}, \frac{\sqrt{x} \sqrt{t} + \sqrt{y} \sqrt{z}}{2}, \frac{z+t}{2},  \sqrt{z}\sqrt{t} \right)
\end{eqnarray*}
Hence, according to Proposition~\ref{propTauduplication0} and~\ref{propTauduplicationZ}, for some appropriate choice of roots we have
\begin{equation*}
F\left( \theta_{00}^2(z,\tau), \theta_{01}^2(z,\tau), \theta_{00}^2(0,\tau), \theta_{01}^2(0,\tau) \right) = \left( \theta_{00}^2(z,2\tau), \theta_{01}^2(z,2\tau), \theta_{00}^2(0,2\tau), \theta_{01}^2(0,2\tau) \right).
\end{equation*}

\begin{proof}[Remark]
One can also write rewrite $F$ using Karatsuba-like techniques
\begin{eqnarray} \label{FKaratsuba}
F(x,y,z,t) = && \left( \frac{(\sqrt{x}+\sqrt{y})(\sqrt{z}+\sqrt{t}) + (\sqrt{x}-\sqrt{y})(\sqrt{z}-\sqrt{t})}{4}, \right. \\
&& \left. \frac{(\sqrt{x}+\sqrt{y})(\sqrt{z}+\sqrt{t}) - (\sqrt{x}-\sqrt{y})(\sqrt{z}-\sqrt{t})}{4}, \right. \notag \\
&& \left. \frac{(\sqrt{z}+\sqrt{t})^2 + (\sqrt{z}-\sqrt{t})^2}{4}, \frac{(\sqrt{z}+\sqrt{t})^2 - (\sqrt{z}-\sqrt{t})^2}{4} \right) \notag
\end{eqnarray}
to speed up computations.
\end{proof}

Following section~\ref{section:AGMthetaconstants}, we define a \textbf{good choice} for square roots at the rank $n$ as the following conditions being satisfied:
\begin{itemize}
\item $\Re(\sqrt{x_n}) \geq 0$, \qquad $\Re(\sqrt{z_n}) \geq 0$;
\item $|\sqrt{x_n} - \sqrt{y_n}| < |\sqrt{x_n} + \sqrt{y_n}|$, \quad or $|\sqrt{x_n} - \sqrt{y_n}| = |\sqrt{x_n} + \sqrt{y_n}|$ and $\Im\left(\frac{\sqrt{y_n}}{\sqrt{x_n}}\right) > 0$;
\item $|\sqrt{z_n} - \sqrt{t_n}| < |\sqrt{z_n} + \sqrt{t_n}|$, \qquad or $|\sqrt{z_n} - \sqrt{t_n}| = |\sqrt{z_n} + \sqrt{t_n}|$ and $\Im\left(\frac{\sqrt{t_n}}{\sqrt{z_n}}\right) > 0$.
\end{itemize}
The last condition is equivalent to $|z_n-t_n| \leq |z_n+t_n|$, which means that $(z_n, t_n)$ is an AGM sequence for $(z_0, t_0)$ in which all the choices of sign are good. Note that the condition $|x-y| < |x+y|$ is equivalent to $\Re\left(\frac{y}{x}\right) > 0$.

Again, similarly to the AGM, for any $x,y,z,t \in \mathbb{C}$ we define the \textbf{optimal F sequence} $((x_n, y_n, z_n, t_n))_{n \in \mathbb{N}}$ as follows:
\begin{eqnarray*}
(x_0, y_0, z_0, t_0) &=& (x,y,z,t) \\
(x_{n+1}, y_{n+1}, z_{n+1}, t_{n+1}) &=& F\left(x_n, y_n, z_n, t_n\right)
\end{eqnarray*}
where all the choices of sign for the square roots are good. The study of this sequence and its convergence is done in Section~\ref{section:convergence}.

\subsection{Link with \texorpdfstring{$\theta$-functions}{theta-functions}}

\subsubsection{More argument reduction}
\label{section:moreArgReduction}

We go slightly further than the conditions~\eqref{eq:conditions} in order to justify the forthcoming results. We wish to further reduce $z$, as follows:
\begin{equation} \label{eq:conditionsz}
0 \leq \Im(z) \leq  \frac{\Im(\tau)}{4}, \qquad |\Re(z)| \leq \frac{1}{8}
\end{equation}
The first hypothesis allows us to avoid $z = \frac{\tau+1}{2}$, which is a zero of $\theta(z,\tau)$, and hence a pole of quotients of the form  $\frac{\theta_i}{\theta_{00}}$, which we consider in our algorithm much in the same way as~\cite{Dupont}. We prove Lemma~\ref{realPartsOfQuotients} and Theorem~\ref{goodchoicethetas} under this assumption. The second condition complements the first one as follows:
\begin{lemma}
Let $z, \tau$ such that $|\Re(\tau)| \leq \frac{1}{2}$ and the conditions of~\eqref{eq:conditionsz} are verified. Then $\frac{z}{\tau}, \frac{-1}{\tau}$ verify the first condition of~\eqref{eq:conditionsz}.
\end{lemma}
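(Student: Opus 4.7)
The approach is a direct computation in real and imaginary parts. Write $z = x + iy$ and $\tau = u + iv$, so that the hypotheses translate to $v > 0$, $|u| \leq 1/2$, $|x| \leq 1/8$, and $0 \leq y \leq v/4$. From $-1/\tau = -\overline{\tau}/|\tau|^2$ one immediately reads $\Im(-1/\tau) = v/|\tau|^2 > 0$, so the transformed modular parameter lies in $\mathcal{H}$ and the condition to verify is well-posed.

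The main step is to compute $\Im(z/\tau) = \Im(z\overline{\tau})/|\tau|^2 = (yu - xv)/|\tau|^2$ and estimate its absolute value using the triangle inequality:
$$|yu - xv| \;\leq\; |y|\,|u| + |x|\,v \;\leq\; \frac{v}{4}\cdot\frac{1}{2} + \frac{1}{8}\cdot v \;=\; \frac{v}{4}.$$
Dividing by $|\tau|^2$ yields $|\Im(z/\tau)| \leq \Im(-1/\tau)/4$, which is the upper bound in the first condition of~\eqref{eq:conditionsz}. The specific constants $1/8$ and $1/4$ appearing in~\eqref{eq:conditionsz} are calibrated precisely so that the two summands each contribute $v/8$ and combine to $v/4$; this is why Section~\ref{section:moreArgReduction} strengthens~\eqref{eq:conditions} in exactly this way, rather than by some other factor.

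The one subtle point is that the sign requirement $\Im(z/\tau) \geq 0$ is not automatic from the hypotheses; a small example such as $z = 1/8$, $\tau = -1/2 + i$ gives $\Im(z/\tau) < 0$. I would handle this by invoking the parity of $\theta_{00}, \theta_{01}, \theta_{10}$ in their first argument: after applying the $S$-transformation, replace $z/\tau$ by $-z/\tau$ if necessary, which flips the sign of the imaginary part without changing the values of the theta functions of interest and without affecting the real-part bound. This parity bookkeeping is the only real obstacle; otherwise the proof is a one-line estimate, and notably neither $|\tau| > 1$ nor a lower bound on $v$ enters the argument.
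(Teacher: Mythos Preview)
Your argument is essentially identical to the paper's: the paper writes
\[
\left|\Im\left(\tfrac{z}{\tau}\right)\right|=\frac{1}{|\tau|^{2}}\,|\Im(z)\Re(\tau)-\Re(z)\Im(\tau)|\le\frac{\Im(\tau)}{|\tau|^{2}}\Bigl(\tfrac14\cdot\tfrac12+\tfrac18\Bigr)=\tfrac14\,\Im\!\left(\tfrac{-1}{\tau}\right),
\]
which is exactly your triangle-inequality estimate. Your additional remark about the sign of $\Im(z/\tau)$ is well taken: the paper's own proof also only bounds $|\Im(z/\tau)|$, and your parity fix (replacing $z/\tau$ by $-z/\tau$) is the natural way to restore the nonnegativity needed downstream.
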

\begin{proof}
Write
\begin{equation*} 
\left|\Im\left( \frac{z}{\tau} \right) \right| = \frac{1}{|\tau|^2} |\Im(z) \Re(\tau) - \Re(z) \Im(\tau)| \leq \frac{\Im(\tau)}{|\tau|^2} \left(\frac{1}{4} \frac{1}{2} + \frac{1}{8}\right) = \frac{1}{4} \Im\left(\frac{-1}{\tau}\right).
\end{equation*}
\end{proof}
This will be used to apply Theorem~\ref{goodchoicethetas} to $\frac{z}{\tau}, \frac{-1}{\tau}$ in Proposition~\ref{propmathfrakF}.

Note that those conditions are satisfied if one takes $z' = \frac{z}{2}$ with $z$ satisfying conditions~\eqref{eq:conditions}. One can then compute $\theta_{00,01}^2(z,\tau)$ from $\theta_{00,01,10}^2(z/2,\tau)$ and $\theta_{00,01,10}^2(0,\tau)$ using the following \textit{$z$-duplication formulas}~\cite[p.~22]{Mumford1}:
\begin{eqnarray} \label{eq:Zduplication}
\theta_{00}(z, \tau) \theta_{00}^3(0,\tau) &=& \theta_{01}^4(z/2, \tau) + \theta_{10}^4(z/2, \tau) \\
\theta_{01}(z, \tau) \theta_{01}^3(0,\tau) &=& \theta_{00}^4(z/2, \tau) - \theta_{10}^4(z/2, \tau)  \notag \\
\theta_{10}(z, \tau) \theta_{10}^3(0,\tau) &=& \theta_{00}^4(z/2, \tau) - \theta_{01}^4(z/2, \tau). \notag
\end{eqnarray}
This requires the knowledge of $\theta_{10}(z,\tau)$ and the associated theta-constant; this could be computed using Jacobi's formula (Equation~\eqref{eq:Jacobi}) and the equation of the variety (Equation~\eqref{eq:Variety}), but we end up using a different trick in our final algorithm.

\subsubsection{Good choices of sign and thetas}

We now prove that, for the arguments we consider, the good choices of sign correspond exactly to values of $\theta$:
\begin{lemma} \label{realPartsOfQuotients}
For any $\tau$ such that $\Im(\tau) \geq \Value$ (in particular, for $\tau \in \mathcal{F}$) and $z$ verifying the first condition in~\eqref{eq:conditionsz} we have
\begin{equation*}
|\theta_{00}(z,\tau) - \theta_{01}(z,\tau)| < |\theta_{00}(z,\tau) + \theta_{01}(z,\tau)|,
\end{equation*}
which also proves that $\Re\left(\frac{\theta_{01}(z,\tau)}{\theta_{00}(z,\tau)}\right) > 0, \Re\left(\frac{\theta_{01}(0,\tau)}{\theta_{00}(0,\tau)}\right) > 0$.
\end{lemma}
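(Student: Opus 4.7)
The plan is to work directly with the series expansions. Writing $q = e^{i\pi\tau}$ and splitting the defining series of $\theta_{00}$ and $\theta_{01}$ according to the parity of $n$, one gets
\begin{equation*}
\theta_{00}(z,\tau)+\theta_{01}(z,\tau) = 2 + 2\sum_{\substack{n\geq 2 \\ n\text{ even}}} q^{n^2}\bigl(e^{2i\pi n z}+e^{-2i\pi n z}\bigr),\quad \theta_{00}(z,\tau)-\theta_{01}(z,\tau)=2\sum_{\substack{n\geq 1\\ n\text{ odd}}} q^{n^2}\bigl(e^{2i\pi n z}+e^{-2i\pi n z}\bigr).
\end{equation*}
So the inequality to prove is that the first quantity has bigger modulus than the second.

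Next, I would control each general term. Using $0\leq\Im(z)\leq\Im(\tau)/4$, I have $|e^{\pm 2i\pi nz}|\leq e^{2\pi n\Im(z)}\leq e^{\pi n\Im(\tau)/2}=r^{-n/2}$, where $r=|q|=e^{-\pi\Im(\tau)}$. Therefore
\begin{equation*}
\bigl|q^{n^2}(e^{2i\pi nz}+e^{-2i\pi nz})\bigr|\leq r^{n^2-n/2}+r^{n^2+n/2}.
\end{equation*}
Summing, $|\theta_{00}-\theta_{01}|\leq 2(r^{1/2}+r^{3/2})+2(r^{15/2}+r^{21/2})+\cdots$, while the reverse triangle inequality gives $|\theta_{00}+\theta_{01}|\geq 2-2(r^{3}+r^{5})-2(r^{14}+r^{18})-\cdots$. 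Hence it suffices to show
\begin{equation*}
U(r):=r^{1/2}+r^{3/2}+r^{3}+r^{5}+\sum_{k\geq 2}\bigl(r^{(2k+1)(4k+1)/2}+r^{(2k+1)(4k+3)/2}\bigr)+\sum_{k\geq 2}\bigl(r^{k(4k-1)}+r^{k(4k+1)}\bigr) < 1.
\end{equation*}
Since all exponents are positive, $U$ is strictly increasing in $r$ on $[0,1)$, so it is enough to check $U(r_0)<1$ at the extreme value $r_0=e^{-\pi\cdot\Value}$ allowed by the hypothesis $\Im(\tau)\geq\Value$. A direct numerical evaluation gives roughly $r_0\approx 0.338$ and $U(r_0)\leq r_0^{1/2}(1+r_0)+r_0^{3}(1+r_0^{2})+\epsilon\approx 0.78+0.04+\epsilon<1$ with a comfortable margin (the tails are bounded crudely by geometric series), which closes the argument.

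Finally, I would deduce the equivalent formulations: the identity $|a+b|^{2}-|a-b|^{2}=4\Re(\overline{a}b)$ shows that the strict inequality just proved is equivalent to $\Re(\overline{\theta_{00}(z,\tau)}\,\theta_{01}(z,\tau))>0$. The same series estimate yields $|\theta_{00}(z,\tau)|\geq 1-(r_0^{1/2}+r_0^{3/2}+\cdots)>0$, so one may divide by $|\theta_{00}(z,\tau)|^{2}$ to conclude $\Re(\theta_{01}(z,\tau)/\theta_{00}(z,\tau))>0$. Specializing the whole argument to $z=0$, which trivially satisfies $0\leq\Im(z)\leq\Im(\tau)/4$, gives the second positivity statement for free.

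The only delicate point is the numerical check of $U(r_0)<1$: the margin at $\Im(\tau)=\Value$ is modest, so one must keep the first two or three non-constant terms explicit and only absorb the higher-order ones into a rough geometric tail estimate. Everything else is routine triangle-inequality bookkeeping.
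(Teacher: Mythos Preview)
Your approach is essentially the same as the paper's: split the series for $\theta_{00}\pm\theta_{01}$ into even/odd terms, bound each term using $0\leq\Im(z)\leq\Im(\tau)/4$, and verify numerically at $\Im(\tau)=\Value$ that the resulting inequality holds. One small slip: from ``$|e^{\pm 2i\pi nz}|\leq r^{-n/2}$'' you only get $|q^{n^2}(e^{2i\pi nz}+e^{-2i\pi nz})|\leq 2r^{n^2-n/2}$, not the sharper $r^{n^2-n/2}+r^{n^2+n/2}$ you then use; the sharper bound \emph{is} correct, but it comes from observing that $|e^{2i\pi nz}|+|e^{-2i\pi nz}|=2\cosh(2\pi n\Im(z))\leq r^{-n/2}+r^{n/2}$ (cosh is maximized at the endpoint $\Im(z)=\Im(\tau)/4$), so just replace that one line and your numerical check goes through as written.
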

\begin{proof}
Write:
\begin{eqnarray*}
|\theta_{00}(z,\tau) + \theta_{01}(z,\tau) - 2| & \leq & 2 \sum_{n \geq 2, n \text{ even}} |q^{n^2} (w^{2n}+w^{-2n})| \\
& \leq & 2 \sum_{n \geq 2, n \text{ even}} |q|^{n^2} (1 + |q|^{-n/2}) \\
& \leq & 2 \sum_{n \geq 1} |q|^{4n^2} (1 + |q|^{-n}) \\
& \leq & 2 |q|^3 + 2 |q|^4 + \frac{2q^{16}}{1-q^{20}} + \frac{2q^{14}}{1-q^{19}} \\
|\theta_{00}(z,\tau) - \theta_{01}(z,\tau)| & \leq & 2 \sum_{n \geq 1, n \text{ odd}} |q|^{n^2} (1 + |q|^{-n/2}) \\
& \leq & 2 |q|^{1/2} + 2|q| + \frac{2q^9}{1-q^{16}} + \frac{2q^{7.5}}{1-q^{19}}
\end{eqnarray*}
We have
\begin{equation*}
\frac{2 |q|^{1/2} + 2|q| + \frac{2q^9}{1-q^{16}} + \frac{2q^{7.5}}{1-q^{19}}}{2 - (2 |q|^3 + 2 |q|^4 + \frac{2q^{16}}{1-q^{20}} + \frac{2q^{14}}{1-q^{19}})} \leq 1
\end{equation*}
for $\Im(\tau) > \Value$, which proves the lemma.
\end{proof}
We are now ready to prove:
\begin{theorem}
\label{goodchoicethetas}
Let $(x_n, y_n, z_n, t_n)$ be the optimal F sequence for $\theta_{00}^2(z,\tau), \theta_{01}^2(z,\tau), \theta_{00}^2(0,\tau), \theta_{01}^2(0,\tau)$.  For any $\tau$ such that $\Im(\tau) \geq \Value$ and $z$ satisfying the first condition of~\eqref{eq:conditionsz} we have
\begin{equation*}
(x_n, y_n, z_n, t_n) = \left( \theta_{00}^2(z, 2^n\tau), \theta_{01}^2(z, 2^n \tau), \theta_{00}^2(0, 2^n \tau), \theta_{01}^2(0, 2^n \tau) \right)
\end{equation*}
\end{theorem}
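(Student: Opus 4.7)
\medskip

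\noindent\textbf{Proof plan.} The plan is to proceed by induction on $n$. The base case $n=0$ is immediate. For the inductive step, assume that $(x_n, y_n, z_n, t_n)$ already equals the theta-squared quadruple at $2^n \tau$. Proposition~\ref{propTauduplication0} together with Proposition~\ref{propTauduplicationZ} shows that if one chooses the square roots $\sqrt{x_n} = \theta_{00}(z, 2^n \tau)$, $\sqrt{y_n} = \theta_{01}(z, 2^n \tau)$, $\sqrt{z_n} = \theta_{00}(0, 2^n \tau)$, $\sqrt{t_n} = \theta_{01}(0, 2^n \tau)$, then applying $F$ yields exactly the theta-squared quadruple at $2^{n+1}\tau$. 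So the only thing to verify is that \emph{these} choices of square roots are precisely the good ones in the sense defined above; uniqueness of the good choice will then force $(x_{n+1}, y_{n+1}, z_{n+1}, t_{n+1})$ to be the desired quadruple.

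\smallskip
\noindent Since $\Im(2^n \tau) = 2^n \Im(\tau) \geq \Im(\tau) \geq \Value$ for every $n \geq 0$, and since the first condition of~\eqref{eq:conditionsz} ($0 \leq \Im(z) \leq \Im(\tau)/4$) is likewise preserved when $\tau$ is replaced by $2^n \tau$, the hypotheses of Lemma~\ref{realPartsOfQuotients} apply at every rank. That lemma directly provides the two inequalities $\Re(\theta_{01}(z, 2^n\tau)/\theta_{00}(z, 2^n\tau)) > 0$ and $\Re(\theta_{01}(0, 2^n\tau)/\theta_{00}(0, 2^n\tau)) > 0$, which are exactly the second and third ``good sign'' conditions written down before the definition of the optimal $F$ sequence.

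\smallskip
\noindent It remains to establish the first condition, namely $\Re(\theta_{00}(z, 2^n\tau)) \geq 0$ and $\Re(\theta_{00}(0, 2^n\tau)) \geq 0$. This I would obtain by a direct series bound: writing $w = e^{i \pi z}$ and using $|w^{2k}| \leq 1$, $|w^{-2k}| \leq |q|^{-k/2}$ (the latter following from $\Im(z) \leq \Im(\tau)/4$), one has
\begin{equation*}
|\theta_{00}(z,\tau) - 1| \;\leq\; \sum_{k \geq 1} |q|^{k^2}\bigl(1 + |q|^{-k/2}\bigr) \;=\; \sum_{k \geq 1} \bigl(|q|^{k^2} + |q|^{k^2 - k/2}\bigr).
\end{equation*}
For $\Im(\tau) \geq \Value$ one has $|q| \leq e^{-\pi\Value}$, and a numerical check of the first two terms plus a geometric tail bound shows this sum is strictly less than $1$; hence $\Re(\theta_{00}(z,\tau)) > 0$. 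The same bound with $z=0$ gives $\Re(\theta_{00}(0,\tau)) > 0$, and substituting $2^n \tau$ for $\tau$ (which only decreases $|q|$) yields the desired inequalities at every rank.

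\smallskip
\noindent The main obstacle here is really just the verification that $\Re(\theta_{00}) > 0$ throughout the iteration: Lemma~\ref{realPartsOfQuotients} already does the heavy lifting for the quotient conditions, and Propositions~\ref{propTauduplication0}--\ref{propTauduplicationZ} already ensure that \emph{some} sign choice lands on the theta-squared quadruple at $2^{n+1}\tau$. Once the $\Re(\theta_{00}) > 0$ numerical estimate is in hand, the four good-sign conditions are all met by the theta values, and the induction closes.
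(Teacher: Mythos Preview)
Your proposal is correct and follows essentially the same route as the paper: induction on $n$, with Lemma~\ref{realPartsOfQuotients} handling the two quotient conditions and a direct series estimate giving $|\theta_{00}(z,\tau)-1|<1$ (hence $\Re(\theta_{00})>0$) for both $z$ and $z=0$, after which Propositions~\ref{propTauduplication0} and~\ref{propTauduplicationZ} close the induction. The paper's bound \eqref{eq:estimateTheta} is exactly your sum $\sum_{k\geq 1}(|q|^{k^2}+|q|^{k^2-k/2})$ with the first few terms written out explicitly and the tail estimated geometrically, so even the numerical verification is the same.
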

\begin{proof}
This is true for $n=0$; we prove the statement inductively. Suppose it is true for $n=k$. We have for any $\tau$:
\begin{equation*}
\theta_{00}(0, \tau) = 1 + 2q + c, \qquad |c| \leq \frac{2|q|^4}{1-|q|^5}
\end{equation*}
For any $\tau$ such that $\Im(\tau) \geq \Value$, $2|q| \leq 0.676$ and $|c| \leq 0.027$; hence $\Re(\theta_{00}(0,2^k \tau)) > 0$ for any $k$, which proves that $\sqrt{z_k}=\theta_{00}(0,2^k\tau)$. Lemma~\ref{realPartsOfQuotients} shows that $\Re\left( \frac{\theta_{01}(0,\tau)}{\theta_{00}(0,\tau)} \right) > 0$, and we also have $\Re\left( \frac{\sqrt{t_k}}{\sqrt{z_k}} \right) \geq 0$ since the choice of roots is good, hence $\sqrt{t_k}=\theta_{01}(0, 2^k\tau)$. Proposition~\ref{propTauduplication0} then proves that $t_{k+1}=\theta_{01}^2(0,2^{k+1}\tau)$ and $z_{k+1}=\theta_{00}^2(0, 2^{k+1} \tau)$.

Similarly, given that $z$ satisfies the first condition of~\eqref{eq:conditionsz}:
\begin{equation} \label{eq:estimateTheta}
|\theta_{00}(z, \tau) - 1| \leq |q|^{1/2} + |q| + |q|^3 + |q|^4 + |q|^{7/2} + |q|^9 + \frac{2|q|^{14}}{1-|q|^2}
\end{equation}
For $\Im(\tau) \geq \Value$, this is strictly smaller than 1; hence $\Re(\theta_{00}(z,\tau)) > 0$, which proves that $\sqrt{x_k} = \theta_{00}(z, 2^k \tau)$. Again, Lemma~\ref{realPartsOfQuotients} proves that $\Re\left( \frac{\theta_{01}(z,2^k \tau)}{\theta_{00}(z, 2^k\tau)} \right) > 0$, and since the choice of signs is good, $\Re\left( \frac{\sqrt{y_k}}{\sqrt{x_k}} \right) \geq 0$, necessarily $\sqrt{y_k} = \theta_{01}(z, 2^k \tau)$. This along with Proposition~\ref{propTauduplicationZ} finishes the induction.
\end{proof}

Note that a consequence of this proposition is the following fact:
\begin{proposition} \label{thetaQuadraticallyConvergent}
The optimal F sequence for $\theta_{00}^2(z,\tau), \theta_{01}^2(z,\tau), \theta_{00}^2(0,\tau), \theta_{01}^2(0,\tau)$ converges quadratically to $(1,1,1,1)$.
\end{proposition}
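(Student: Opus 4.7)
The plan is to use Theorem~\ref{goodchoicethetas} to rewrite the optimal $F$-sequence explicitly in terms of theta values at doubled $\tau$-arguments, and then read off convergence and its rate from the $q$-expansion of $\theta$.

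By Theorem~\ref{goodchoicethetas}, the optimal $F$-sequence is
\[
(x_n, y_n, z_n, t_n) = \bigl(\theta_{00}^2(z, 2^n\tau),\ \theta_{01}^2(z, 2^n\tau),\ \theta_{00}^2(0, 2^n\tau),\ \theta_{01}^2(0, 2^n\tau)\bigr).
\]
Set $q_n = e^{i\pi 2^n \tau}$; then $q_{n+1} = q_n^2$, so $|q_n| = |q_0|^{2^n}$ decays super-exponentially since $\Im\tau > 0$. Expanding the defining series for $w \in \{0, z\}$ yields
\[
\theta_{00}(w, 2^n\tau)^2 = 1 + 4 q_n \cos(2\pi w) + O(|q_n|^2), \qquad \theta_{01}(w, 2^n\tau)^2 = 1 - 4 q_n \cos(2\pi w) + O(|q_n|^2),
\]
uniformly for $n$ large enough, with implied constants depending only on $z, \tau$. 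Hence each of $|x_n - 1|, |y_n - 1|, |z_n - 1|, |t_n - 1|$ is $O(|q_n|) \to 0$, proving convergence of the tuple to $(1,1,1,1)$.

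For quadratic convergence in the sense of the definition in Section~\ref{section:AGMthetaconstants}, I would apply the definition componentwise. Let $a_n$ stand for any one of $x_n, y_n, z_n, t_n$, with corresponding $w \in \{0, z\}$. The upper bound above gives $|a_n - 1| \leq M |q_n|$ for some $M$; to get a matching lower bound I would check that the leading coefficient $4\cos(2\pi w)$ is bounded away from zero under hypothesis~\eqref{eq:conditionsz}. For $w = 0$ this is trivial, and for $w = z$ with $|\Re z| \leq \tfrac18$ we compute
\[
|\cos(2\pi z)|^2 = \cos^2(2\pi\Re z)\cosh^2(2\pi\Im z) + \sin^2(2\pi\Re z)\sinh^2(2\pi\Im z) \geq \cos^2(\pi/4) = \tfrac12,
\]
so $|a_n - 1| \geq m|q_n|$ for some $m > 0$ and $n$ large. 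Consequently $|a_n - a_{n-1}| \asymp |a_{n-1} - 1| \asymp |q_{n-1}|$ (the term $|a_n - 1| = O(|q_{n-1}|^2)$ being absorbed), whereas $|a_{n+1} - a_n| \asymp |q_n| = |q_{n-1}|^2$. This gives $|a_{n+1} - a_n| \leq C |a_n - a_{n-1}|^2$ with bounded $C$, i.e. quadratic convergence.

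The main obstacle is obtaining the two-sided estimate $|a_n - 1| \asymp |q_n|$: the upper bound is the standard tail estimate already used in Proposition~\ref{naiveboundtheta}, but the matching lower bound requires controlling the leading Fourier coefficient, which is where the hypothesis $|\Re z| \leq \tfrac18$ from~\eqref{eq:conditionsz} is essential. Once this is in hand, everything reduces to the algebraic observation $q_{n+1} = q_n^2$.
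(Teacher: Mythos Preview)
Your argument is correct and follows the same route the paper takes: the paper simply records this proposition as an immediate consequence of Theorem~\ref{goodchoicethetas} (the identification $(x_n,y_n,z_n,t_n)=(\theta_{00}^2(z,2^n\tau),\ldots)$) without spelling out the $q$-expansion estimates, while you supply those details explicitly. One small remark: for the lower bound $|a_n-1|\geq m|q_n|$ you invoke the second condition $|\Re z|\leq\tfrac18$ of~\eqref{eq:conditionsz}, whereas Theorem~\ref{goodchoicethetas} (and hence this proposition, as stated) only assumes the first condition; this is harmless in the context of the paper, which ultimately works under both conditions anyway, and in the exceptional case $\cos(2\pi z)=0$ the same argument goes through with the first nonvanishing Fourier coefficient.
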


\subsection{A function with quasi-optimal time evaluation}

The strategy of~\cite{Dupont} is to use an homogenization of the AGM to get a function $f_{\tau}:~\mathbb{C}~\rightarrow~\mathbb{C}$, on which Newton's method can be applied. To generalize this, we homogenize the function which maps to $(x,y,z,t)$ the limit of the optimal F sequence associated to them; it becomes a function from $\mathbb{C}^2$ to $\mathbb{C}^2$. We call this function $F^{\infty}$; this function is a major building block for the function we use to compute our two parameters $z, \tau$ using Newton's method.

\begin{proposition} \label{calcullambda}
Let $\lambda, \mu \in \mathbb{C}$. Let $((x_n, y_n, z_n, t_n))_{n \in \mathbb{N}}$ be the optimal F sequence for $(x,y,z,t)$, and $((x'_n, y'_n, z'_n, t'_n))_{n \in \mathbb{N}}$ the optimal F sequence for $(\lambda x,\lambda y,\mu z,\mu t)$. Put $\lim_{n \rightarrow \infty} z_n = z_{\infty}$ and $\lim_{n \rightarrow \infty} z'_n = z'_{\infty}$. Then we have
\begin{equation*}
\mu = \frac{z'_{\infty}}{z_{\infty}}, \qquad \qquad \lambda = \frac{ \left( \lim_{n \rightarrow \infty} \left( \frac{x'_n}{z'_{\infty}} \right)^{2^n} \right) \times z'_{\infty}}{\left( \lim_{n \rightarrow \infty} \left( \frac{x_n}{z_{\infty}} \right)^{2^n} \right) \times z_{\infty}}
\end{equation*}
\end{proposition}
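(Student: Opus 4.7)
The core of the proof is a scaling identity: I claim that $F$ behaves covariantly under a multiplicative rescaling of its inputs, namely
\begin{equation*}
F(\lambda x, \lambda y, \mu z, \mu t) = \bigl( \sqrt{\lambda \mu}\, x_1,\; \sqrt{\lambda \mu}\, y_1,\; \mu z_1,\; \mu t_1 \bigr),
\end{equation*}
where $(x_1, y_1, z_1, t_1) = F(x,y,z,t)$, with square roots chosen consistently between the two sequences. This follows directly from the definition of $F$: the last two coordinates are visibly scaled by $\mu$ since $(\mu z + \mu t)/2 = \mu(z+t)/2$ and $\sqrt{\mu z \cdot \mu t}$ equals $\mu \sqrt{zt}$ for the appropriate sign; the first two coordinates each involve one square root from the $(\lambda x, \lambda y)$ pair and one from the $(\mu z, \mu t)$ pair, producing a factor $\sqrt{\lambda \mu}$ out front.

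The main obstacle is verifying that the good sign choice for the rescaled sequence really is compatible with this identity, i.e.\ that the good $\sqrt{\lambda x_n}$ can be written as $\sqrt{\lambda_n}\,\sqrt{x_n}$ for a single choice of $\sqrt{\lambda_n}$ that also works simultaneously for $\sqrt{\lambda y_n}$. This is where one invokes the good-choice conditions: since the good choice forces $\Re(\sqrt{y_n}/\sqrt{x_n}) \geq 0$, taking $\nu_n := \sqrt{\lambda x_n}/\sqrt{x_n}$ (a specific square root of $\lambda_n$) one checks that $\nu_n \sqrt{y_n}$ has nonnegative real part and satisfies the good-choice relation against $\sqrt{\lambda z_n}$, so it must be the good $\sqrt{\lambda y_n}$. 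One then verifies by induction that the primed sequence, with the good signs, is exactly $(\lambda_n x_n, \lambda_n y_n, \mu z_n, \mu t_n)$, where $\lambda_{n+1} = \sqrt{\lambda_n \mu}$ (good sign) and $\lambda_0 = \lambda$.

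Solving this recurrence is then immediate: setting $u_n := \log(\lambda_n/\mu)$ one has $u_{n+1} = u_n/2$, hence $\lambda_n = \mu (\lambda/\mu)^{1/2^n}$. In particular $\lambda_n \to \mu$. Taking limits of $z'_n = \mu z_n$ yields $z'_\infty = \mu z_\infty$, which gives the first formula $\mu = z'_\infty/z_\infty$.

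For the second formula, I use the key observation that raising $\lambda_n$ to the $2^n$ power \textbf{exactly} neutralizes the $2^n$-th root, independent of $n$:
\begin{equation*}
\left( \frac{\lambda_n}{\mu} \right)^{2^n} = \left( \left( \frac{\lambda}{\mu} \right)^{1/2^n} \right)^{2^n} = \frac{\lambda}{\mu}.
\end{equation*}
Combining with $x'_n = \lambda_n x_n$ and $z'_\infty = \mu z_\infty$ yields
\begin{equation*}
\left( \frac{x'_n}{z'_\infty} \right)^{2^n} = \left( \frac{\lambda_n}{\mu} \right)^{2^n} \left( \frac{x_n}{z_\infty} \right)^{2^n} = \frac{\lambda}{\mu} \cdot \left( \frac{x_n}{z_\infty} \right)^{2^n}.
\end{equation*}
Assuming the limit on the right exists (which is guaranteed by the quadratic convergence of the $F$ sequence in the regime of interest — note that despite $x_n/z_\infty \to 1$, the $2^n$-th power has a non-trivial limit, as can be checked on the degenerate symmetric instance $y=x$, $t=z$ where $(x_n/z_\infty)^{2^n} = x/z$), passing to the limit and rearranging gives $\lambda/\mu = \lim (x'_n/z'_\infty)^{2^n} / \lim (x_n/z_\infty)^{2^n}$, and multiplying by $\mu = z'_\infty/z_\infty$ yields the stated formula for $\lambda$.
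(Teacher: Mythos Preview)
Your approach is essentially the same as the paper's: both prove by induction that the primed sequence is the unprimed one rescaled by $(\lambda_n,\lambda_n,\mu,\mu)$ with $\lambda_{n+1}^2=\lambda_n\mu$, then use $(\lambda_n/\mu)^{2^n}=\lambda/\mu$ to conclude. Two minor points of sloppiness worth cleaning up: (i) your sign-compatibility sentence is garbled (the good-choice relation for $\sqrt{y'_n}$ is against $\sqrt{x'_n}$, not ``against $\sqrt{\lambda z_n}$'', and it is $\Re(\sqrt{y'_n}/\sqrt{x'_n})\geq 0$ that you need, not $\Re(\nu_n\sqrt{y_n})\geq 0$); the paper handles this by writing $\sqrt{x'_n}=\epsilon_x\sqrt{\lambda_n}\sqrt{x_n}$, $\sqrt{y'_n}=\epsilon_y\sqrt{\lambda_n}\sqrt{y_n}$ and arguing $\epsilon_x=\epsilon_y$ from the two good-choice conditions; (ii) solving the recurrence via logarithms is not quite rigorous because of branch ambiguities (the paper tracks an explicit $2^n$-th root of unity $\epsilon_n$), but your crucial identity $(\lambda_n/\mu)^{2^n}=\lambda/\mu$ follows directly from squaring the recurrence and needs no branch bookkeeping, so the argument survives.
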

\begin{proof}
We prove by induction that
\begin{equation*}
x'_n = \epsilon_n \lambda^{1/2^n} \mu^{1-1/2^n} x_n, \quad y'_n = \epsilon_n \lambda^{1/2^n} \mu^{1-1/2^n} y_n, \qquad z'_n = \mu z_n, \quad t'_n = \mu t_n,
\end{equation*}
where $\Re(\lambda^{1/2^n}) \geq 0$, $\Re(\mu^{1-1/2^n}) \geq 0$, and $\epsilon_n$ is a $2^n$-th root of unity. This is enough to prove the proposition above, since then
\begin{equation*}
\lim_{n \rightarrow \infty} \left( \frac{x'_n}{z'_{\infty}} \right)^{2^n} = \lim_{n \rightarrow \infty} \lambda \mu^{2^n-1} \left( \frac{x_n}{z'_{\infty}} \right)^{2^n} = \frac{\lambda}{\mu} \lim_{n \rightarrow \infty} \left( \frac{x_n}{z_{\infty}} \right)^{2^n}.
\end{equation*}

Since this is true for $n=0$, suppose this is true for $n=k$. 
We have
\begin{equation*}
z'_{k+1} = \frac{z'_k + t'_k}{2} = \mu z_{k+1}.
\end{equation*}
As for $t_{k+1}$, we can write
\begin{equation*}
\sqrt{z'_k} = \epsilon_z \sqrt{\mu} \sqrt{z_k}, \qquad \sqrt{t'_k} = \epsilon_t \sqrt{\mu} \sqrt{t_k}
\end{equation*}
where $\epsilon_z = \pm 1$ and $\epsilon_t = \pm 1$, and the square roots are taken with positive real part. But since $\Re\left(\frac{\sqrt{t'_k}}{\sqrt{z'_k}}\right) \geq 0$ and $\Re\left(\frac{\sqrt{t_k}}{\sqrt{z_k}} \right) \geq 0$, we have $\epsilon_z = \epsilon_t$. Hence
\begin{equation*}
t'_{k+1} = \left( \epsilon_z \sqrt{\mu} \sqrt{z_k} \right) \left( \epsilon_z \sqrt{\mu} \sqrt{t_k} \right) = \mu t_{k+1}.
\end{equation*}

As for the other coordinates, we have
\begin{equation*}
\sqrt{x'_k} = \epsilon_x \sqrt{\epsilon_k} \lambda^{1/2^{k+1}} \mu^{1/2-1/2^{k+1}} \sqrt{x_k}, \qquad \sqrt{y'_k} = \epsilon_y \sqrt{\epsilon_k} \lambda^{1/2^{k+1}} \mu^{1/2-1/2^{k+1}} \sqrt{y_k}
\end{equation*}
where the roots are taken with positive real part, and $\epsilon_x, \epsilon_y \in \{-1, 1\}$. Since $\Re\left( \frac{\sqrt{y_k}}{\sqrt{x_k}} \right) \geq 0$ and we require $\Re\left( \frac{\sqrt{y'_k}}{\sqrt{x'_k}} \right) \geq 0$, necessarily $\epsilon_x = \epsilon_y$; hence
\begin{equation*}
x'_{k+1} = \frac{\sqrt{x'_k} \sqrt{z'_k} + \sqrt{y'_k} \sqrt{t'_k}}{2} = \epsilon_{k+1} \lambda^{1/2^{k+1}} \mu^{1-1/2^{k+1}} \frac{\sqrt{x_k} \sqrt{z_k} + \sqrt{y_k} \sqrt{t_k}}{2} = \epsilon_{k+1} \lambda^{1/2^{k+1}} \mu^{1-1/2^{k+1}} x_{k+1}
\end{equation*}
where $\epsilon_{k+1} = \epsilon_x \sqrt{\epsilon_z}$ is indeed such that $\epsilon_{k+1}^{2^{k+1}} = 1$. This proves the proposition.
\end{proof}

In the case of theta-functions, however, we have:
\begin{proposition} \label{thetasPow2PowN}
\begin{equation*}
\lim_{n \rightarrow \infty} \frac{\theta(z,2^n\tau)^{2^n}}{\theta(0,2^n\tau)^{2^n}} = 1
\end{equation*}
\end{proposition}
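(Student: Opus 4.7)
The plan is to extract a two-term asymptotic expansion of both $\theta(z, 2^n \tau)$ and $\theta(0, 2^n \tau)$ in powers of $q^{2^n}$, with an error that decays \emph{doubly} exponentially in $n$, then form the quotient and check that the $2^n$-th power still tends to $1$ because the correction decays much faster than $2^n$ grows.

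Concretely, setting $q_n = q^{2^n} = e^{i\pi 2^n \tau}$, I would split the defining series as
\begin{equation*}
\theta(z, 2^n \tau) \;=\; 1 \;+\; q_n \bigl(e^{2\pi i z}+e^{-2\pi i z}\bigr) \;+\; \sum_{k \geq 2} q_n^{k^2}\bigl(e^{2\pi i k z}+e^{-2\pi i k z}\bigr).
\end{equation*}
Under the standing hypothesis $0 \leq \Im(z) \leq \Im(\tau)/2$, each factor $|e^{\pm 2\pi i k z}|$ is bounded by $|q|^{-k}$, so the $k$-th tail term has magnitude at most $2|q|^{k^2 2^n - k}$. The $k=2$ term dominates a geometric-type sum, and the remainder is therefore $O(|q|^{4 \cdot 2^n - 2})$ uniformly in $n$, with a constant depending only on $|q|$. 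Setting $z=0$ gives the same kind of expansion for $\theta(0, 2^n \tau) = 1 + 2 q_n + O(|q|^{4 \cdot 2^n - 2})$.

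Next, since both series are of the form $1 + O(|q|^{2^n - 1})$, the ratio is close to $1$ and a direct expansion yields
\begin{equation*}
\frac{\theta(z, 2^n \tau)}{\theta(0, 2^n \tau)} \;=\; 1 + q^{2^n}\bigl(e^{2\pi i z} + e^{-2\pi i z} - 2\bigr) + O\!\bigl(|q|^{2 \cdot 2^n - 2}\bigr).
\end{equation*}
Applying $\log(1+x) = x + O(x^2)$ (principal branch, well defined since the argument is near $1$) and then multiplying by $2^n$ gives
\begin{equation*}
\log\!\left( \frac{\theta(z, 2^n \tau)^{2^n}}{\theta(0, 2^n \tau)^{2^n}} \right) \;=\; 2^n q^{2^n} \bigl(e^{2\pi i z} + e^{-2\pi i z} - 2\bigr) + O\!\bigl(2^n |q|^{2 \cdot 2^n - 2}\bigr).
\end{equation*}
The factor $e^{2\pi i z}+e^{-2\pi i z}-2$ is a $z$-dependent constant, independent of $n$; the remaining prefactor $2^n |q|^{2^n}$ (and a fortiori $2^n |q|^{2\cdot 2^n - 2}$) tends to $0$ because the doubly exponential decay of $|q|^{2^n}$ overwhelms the simple exponential $2^n$. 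Hence the logarithm tends to $0$, and exponentiating yields the claimed limit $1$.

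There is no real obstacle here beyond bookkeeping: the argument is essentially a two-term Taylor expansion. The only point deserving a line of justification is the use of the principal logarithm, which is legitimate precisely because the ratio stays in a small neighbourhood of $1$; the identity $\theta^{2^n} = \exp(2^n \log \theta)$ is then unambiguous, as any $2\pi i \mathbb{Z}$ ambiguity in $\log \theta$ is killed by the integer power $2^n$ upon exponentiation.
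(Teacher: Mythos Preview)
Your proof is correct and follows essentially the same approach as the paper's: expand $\theta(z,2^n\tau)=1+O(|q|^{2^n-1})$ from the defining series and use that $2^n|q|^{2^n}\to 0$. The only structural difference is that the paper treats numerator and denominator separately (it shows $\theta(z,2^n\tau)^{2^n}\to 1$ directly, noting this also covers $z=0$) rather than forming the ratio first, but the underlying estimate is the same.
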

\begin{proof}
It is enough to prove $\lim_{n \rightarrow \infty} \theta(z,2^n\tau)^{2^n} = 1$, since it also covers the case $z=0$. Write, like Equation~\eqref{eq:estimateTheta}:
\begin{equation*}
\theta(z, 2^n\tau) = 1 + q^{2^n}(w^2+w^{-2}) + c, \qquad |c| \leq \frac{2|q|^{2^n+3}}{1-|q|}
\end{equation*}
We can write $\theta(z,2^n\tau) = 1 + d_n$ with $|d_n| \leq 2 |q|^{2^n}|w^2+w^{-2}|$. We then have classically
\begin{equation*}
|\theta(z,2^n\tau)^{2^n} - 1| \sim |2^n d_n|
\end{equation*}
and since $\lim_{n \rightarrow \infty} 2^n |q|^{2^n} = 0$, this proves the proposition.
\end{proof}
Combining this with Theorems~\ref{calcullambda} and~\ref{goodchoicethetas} proves that, for $(x_n,y_n,z_n,t_n)$ as in Theorem~\ref{goodchoicethetas}, we have
\begin{equation*}
\lambda = \lim_{n \rightarrow \infty} \left( \frac{x'_n}{\mu} \right)^{2^n} \times \mu.
\end{equation*}
In particular, if we define the following function:
\begin{eqnarray*}
F^{\infty} : \mathbb{C}^4 & \rightarrow & \mathbb{C}^2 \\
(x, y, z, t) & \mapsto & \left(\left( \lim_{n \rightarrow \infty} \left( \frac{x_n}{z_{\infty} } \right)^{2^n} \right) \times z_{\infty}, z_{\infty} \right)
\end{eqnarray*}
where $z_{\infty} = \lim_{n \rightarrow \infty} z_n$, then we have that, for any $z, \tau$ satisfying the hypotheses of Theorem~\ref{goodchoicethetas},
\begin{equation*}
F^{\infty}\left(\lambda \theta_{00}^2(z,\tau), \lambda \theta_{01}^2(z,\tau), \mu \theta_{00}^2(0,\tau), \mu \theta_{01}^2(0,\tau) \right) = (\lambda, \mu).
\end{equation*}
For instance,
\begin{equation} \label{eq:OurAGMQuotients}
F^{\infty}\left(1, \frac{\theta_{01}^2(z,\tau)}{\theta_{00}^2(z,\tau)} , 1, \frac{\theta_{01}^2(0,\tau)}{\theta_{00}^2(0,\tau)} \right) = \left( \frac{1}{\theta_{00}^2(z,\tau)}, \frac{1}{\theta_{00}^2(0,\tau)} \right).
\end{equation}
This is similar to Equation~\eqref{eq:AGMQuotients}, and will play a similar role in the computation of $\theta(z,\tau)$.

\subsection{Convergence}
\label{section:convergence}

Let us start by showing that, contrary to the AGM and despite Proposition~\ref{thetaQuadraticallyConvergent}, an optimal F sequence does not always converge quadratically; for instance, the optimal F sequence for $(2,2,1,1)$ is $((2^{1/2^n}, 2^{1/2^n}, 1, 1))_{n \in \mathbb{N}}$, which does not converge quadratically. This is a big difference from the AGM, and this is why we are reluctant to call optimal F sequences a ``generalization of the AGM''. However, we now show that the sequence $(\lambda_n) = \left(\left(\frac{x_n}{z_{\infty}}\right)^{2^n} \times z_{\infty} \right)_{n \in \mathbb{N}}$ converges quadratically, whence $F^{\infty}$ can be computed in $O(\mathcal{M}(P) \log P)$ bit operations.

In all that follows, we take $C > 1$ such that $|x_0|, |y_0|, |z_0|, |t_0| \leq C$. Note that the equations defining $F$ can then be used to prove inductively that
\begin{equation*}
\forall n \in \mathbb{N}, |x_n|, |y_n|, |z_n|, |t_n| \leq C.
\end{equation*}
We prove in Section~\ref{section:proveAllHypo} that there is a suitable $C$ for any $z, \tau$ we consider in our final algorithm. We also have a lower bound:
\begin{proposition}
There exists $\epsilon, n_0$ such that for all $n \geq n_0$, $|x_n|, |y_n|, |z_n|, |t_n| \geq \epsilon$.
\end{proposition}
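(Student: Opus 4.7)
The argument naturally splits in two: an AGM-style convergence argument for the $(z_n, t_n)$ coordinates, and a reduction to theta-functions for the $(x_n, y_n)$ coordinates. First, since every choice of sign is good, $(z_n, t_n)$ is by definition the optimal AGM sequence for $(z_0, t_0)$. Under the mild non-degeneracy implicit in our applications (where $z_0, t_0$ come from the non-vanishing theta-constants $\theta_{00}^2(0,\tau), \theta_{01}^2(0,\tau)$, up to a common non-zero scaling $\mu$), the Cox proposition cited in Section~\ref{section:AGMthetaconstants} gives quadratic convergence of $(z_n)$ and $(t_n)$ to a common, non-zero limit $z_\infty$. Consequently there is $n_1$ such that $|z_n|, |t_n| \geq |z_\infty|/2$ for all $n \geq n_1$.

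For $x_n$ and $y_n$ my plan is to use Proposition~\ref{calcullambda} to reduce to the theta case. In all the F-sequences we will actually feed to our algorithm, the initial data is of the form $(\lambda \theta_{00}^2(z,\tau), \lambda \theta_{01}^2(z,\tau), \mu \theta_{00}^2(0,\tau), \mu \theta_{01}^2(0,\tau))$ with $\lambda, \mu \neq 0$, in which case Proposition~\ref{calcullambda} yields
\begin{equation*}
x_n = \epsilon_n \lambda^{1/2^n} \mu^{1 - 1/2^n} \theta_{00}^2(z, 2^n\tau), \qquad |\epsilon_n| = 1,
\end{equation*}
and a symmetric expression for $y_n$. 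By Proposition~\ref{thetaQuadraticallyConvergent} (or equivalently by applying the estimate~\eqref{eq:estimateTheta} with $\tau$ replaced by $2^n\tau$, using that $|e^{i\pi 2^n\tau}|$ decays doubly exponentially), the theta-factor tends to $1$; combined with $\lambda^{1/2^n} \to 1$ and $\mu^{1 - 1/2^n} \to \mu$, this gives $|x_n| \to |\mu|$ and $|y_n| \to |\mu|$. Hence there is $n_2$ with $|x_n|, |y_n| \geq |\mu|/2$ for all $n \geq n_2$. Taking $n_0 = \max(n_1, n_2)$ and $\epsilon = \tfrac{1}{2} \min(|z_\infty|, |\mu|)$ concludes the proof.

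The main subtlety is really bookkeeping rather than analysis: one needs to justify $z_\infty \neq 0$ and $\mu \neq 0$, both of which ultimately reduce to the non-vanishing of $\theta_{00}(0,\tau), \theta_{01}(0,\tau)$ on our working domain. The former follows from the estimate $\Re(\theta_{00}(0, 2^k \tau)) > 0$ established in the proof of Theorem~\ref{goodchoicethetas}, while $\theta_{01}(0, \tau) = 0$ would contradict Lemma~\ref{realPartsOfQuotients}. Once those two facts are in hand the rest is a routine limiting argument.
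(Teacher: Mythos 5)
Your argument is sound on the theta locus, but it takes a genuinely different route from the paper's. Both proofs handle $(z_n, t_n)$ identically via Cox's proposition; the divergence is in how $(x_n, y_n)$ are bounded below. The paper argues \emph{intrinsically} from the $F$ recursion, never invoking $\theta$: the good-choice-of-sign inequalities and the Karatsuba form~\eqref{FKaratsuba} yield $|x_{n+1}-y_{n+1}| \leq \sqrt{C/(2c)}\,|z_n-t_n|$, so $|x_n-y_n|\to 0$ quadratically, and then an elementary estimate of the shape $|x_{n+1}| \geq \delta\,\epsilon_n$ with $\prod_n \epsilon_n$ bounded away from zero completes the proof. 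You instead specialize to initial data $(\lambda\theta_{00}^2(z,\tau),\lambda\theta_{01}^2(z,\tau),\mu\theta_{00}^2(0,\tau),\mu\theta_{01}^2(0,\tau))$, read off the closed form $x_n = \epsilon_n\lambda^{1/2^n}\mu^{1-1/2^n}\theta_{00}^2(z,2^n\tau)$ from the proof of Proposition~\ref{calcullambda}, and combine it with Proposition~\ref{thetaQuadraticallyConvergent} to obtain $|x_n|\to|\mu|$. The trade-off is that your route is shorter and makes the limit explicit, but it covers only scaled-theta starting data. In Algorithm~\ref{fastthetas}, however, $F^{\infty}$ is evaluated on $(1,s,1,t)$ with $s,t$ Newton iterates (and finite-difference perturbations of them), which are merely approximations of theta quotients and need not lie on the theta variety; the paper's intrinsic bound applies verbatim to any such nearby quadruple with initial magnitudes bounded below, whereas extending your reduction to those inputs would require a supplementary stability/perturbation argument, at which point the two proofs become comparable in length.
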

\begin{proof}
Note that \cite[Prop~2.1]{Cox} proves that $|z_n|, |t_n| \geq c$, with
$c=\frac{2}{\pi} \min(|z_0|, |t_0|)$. Suppose $|x_n|, |y_n|, |z_n|, |t_n| \geq \delta > 0$, and write
$$|\sqrt{x_n}+\sqrt{y_n}|^2 = 2|\sqrt{x_n}|^2 + 2|\sqrt{y_n}|^2 -
|\sqrt{x_n}-\sqrt{y_n}|^2.$$
Using $|\sqrt{x_n}-\sqrt{y_n}| \leq |\sqrt{x_n}+\sqrt{y_n}|$, which comes
the fact that choices of square roots are good, we derive the two
following lower bounds.
\begin{align*}
|\sqrt{x_n}+\sqrt{y_n}| &\geq \sqrt{ 2|\sqrt{x_n}|^2 + 2|\sqrt{y_n}|^2 - |x_n-y_n|}, \\
& \geq 2 \sqrt{\delta} \sqrt{1-\frac{|x_n-y_n|}{4 \delta}},\\
    \llap{and }
|\sqrt{x_n}+\sqrt{y_n}|^2 &\geq { 2|\sqrt{x_n}|^2 + 2|\sqrt{y_n}|^2 -
|\sqrt{x_n}+\sqrt{y_n}|^2},\\
|\sqrt{x_n}+\sqrt{y_n}|^2 &\geq { |\sqrt{x_n}|^2 + |\sqrt{y_n}|^2} \geq
2\delta.
\end{align*}
Using the same arguments, we obtain:
\begin{align*}
|\sqrt{z_n}+\sqrt{t_n}| &\geq 2 \sqrt{c} \sqrt{1 - \frac{|z_n-t_n|}{4 c}},\\
|\sqrt{z_n}+\sqrt{t_n}| &\geq \sqrt{2c}.
\end{align*}
We then have
\begin{align*}
|\sqrt{x_n}-\sqrt{y_n}| & \leq \frac{|x_n-y_n|}{|\sqrt{x_n}+\sqrt{y_n}|} \leq \frac{|x_n-y_n|}{\sqrt{2 \delta} }, \\
|\sqrt{z_n}-\sqrt{t_n}| & \leq \frac{|z_n-t_n|}{|\sqrt{z_n}+\sqrt{t_n}|} \leq \frac{|z_n-t_n|}{\sqrt{2 c} }
\end{align*}
which we use along with Equation~\ref{FKaratsuba} to write:
\begin{equation*}
|x_{n+1}| \geq \sqrt{\delta} \sqrt{c} \sqrt{\left( 1-\frac{|x_n-y_n|}{4 \delta} \right) \left( 1 - \frac{|z_n-t_n|}{4 c} \right)} - \frac{|x_n-y_n||z_n-t_n|}{8 \sqrt{\delta} \sqrt{c} }
\end{equation*}
Now, using only \cite[Prop~2.1]{Cox} and Equation~\ref{FKaratsuba}, we can write
\begin{equation*}
|x_{n+1}-y_{n+1}| = \frac{|\sqrt{x_n}-\sqrt{y_n}| |\sqrt{z_n} -
\sqrt{t_n}|}{2} \leq \sqrt{\frac{C}{2c}} |z_n-t_n|
\end{equation*}
which proves that $(|x_n-y_n|)$ converges quadratically to 0; we thus have $|x_n - y_n| \leq C_n \frac{\delta}{2^{2^n}}$, $|z_n - t_n| \leq C'_n \frac{c}{2^{2^n}}$ with $C_n, C'_n = o(2^{2^n})$. This gives
\begin{equation*}
|x_{n+1}| \geq \sqrt{\delta} \sqrt{c} \left( \sqrt{\left( 1-\frac{C_n}{2^{2^n+2}} \right) \left( 1 - \frac{C'_n}{2^{2^n+2}} \right)} - \frac{C_n}{2^{2^n+3}} \frac{C'_n}{2^{2^n+3}} \right)
\end{equation*}
Finally, supposing that $c > \delta$, we get that $|x_n| \geq \delta \Rightarrow |x_{n+1}| \geq \delta \epsilon_n$; since $(\epsilon_n)$ converges to 1 quadratically, $\prod_{k \geq n} \epsilon_n$ converges to $\epsilon > 0$, which proves that $|x_{n+k}| \geq \epsilon \delta$ for all $k$.
\end{proof}

We now prove that $(\lambda_n) = \left(\left(\frac{x_n}{z_{\infty}}\right)^{2^n} \times z_{\infty} \right)_{n \in \mathbb{N}}$ converges quadratically, by proving the following theorem:
\begin{theorem} \label{thmconvergence}
The sequence $(\lambda_n)$ converges, to a limit $\lambda$. Furthermore, for $P$ large enough, there exists a constant $c_1 > 0$, depending on $C, \epsilon$ and $|\lambda|$, such that, if $k$ is the first integer such that $|z_k - t_k| \leq 2^{-P-k-c_1}$, then $\lambda_{k+1}$ is an approximation of $\lambda$ with absolute precision $P$ bits.
\end{theorem}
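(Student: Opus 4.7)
The plan is to control $\lambda_n$ multiplicatively through
\begin{equation*}
\frac{\lambda_{n+1}}{\lambda_n} = \left(\frac{x_{n+1}^2}{x_n z_\infty}\right)^{2^n},
\end{equation*}
and to show that the successive correction factors $(1+\delta_n)^{2^n}$, with $\delta_n := x_{n+1}^2/(x_n z_\infty) - 1$, are close enough to $1$ for the telescoping product to converge. I first note that $x_n \to z_\infty$: analyzing the recurrence $2 x_{n+1} = \sqrt{x_n z_n} + \sqrt{y_n t_n}$ and Taylor-expanding the square roots around $z_\infty$, one finds $x_{n+1} - z_\infty = (x_n - z_\infty)/2 + (y_n - x_n)/4 + O(|d_n|) + O(|x_n - z_\infty|^2)$, a contractive recurrence whose solutions converge geometrically to $0$.

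The crux of the proof is the estimate $|\delta_n| = O(|d_{n-1}|)$ where $d_n := z_n - t_n$. Writing $4(x_{n+1}^2 - x_n z_\infty) = (\sqrt{x_n z_n} + \sqrt{y_n t_n})^2 - 4 x_n z_\infty$, expanding $y_n = x_n + (y_n - x_n)$, $t_n = z_n + (t_n - z_n)$, and Taylor-expanding $\sqrt{x_n y_n z_n t_n}$ around $x_n z_n$, the first-order part collapses to
\begin{equation*}
x_{n+1}^2 - x_n z_\infty = \tfrac{1}{2}x_n\bigl((z_n - z_\infty) + (t_n - z_\infty)\bigr) + \tfrac{1}{2}z_n(y_n - x_n) + R_n,
\end{equation*}
where $|R_n|$ is controlled by products of the same small quantities thanks to the uniform bounds $\epsilon \leq |x_n|, |y_n|, |z_n|, |t_n| \leq C$. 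The AGM estimate $|z_n - z_\infty|, |t_n - z_\infty| = O(|d_n|)$ (from $z_{n+1} - z_n = -d_n/2$ and the quadratic convergence $|d_{n+1}| \leq K|d_n|^2$), together with $|y_n - x_n| \leq \sqrt{C/(2c)}\,|d_{n-1}|$ already shown in the preceding proof, make the $(y_n - x_n)$ term dominant and give $|x_{n+1}^2 - x_n z_\infty| = O(|d_{n-1}|)$; dividing by $|x_n||z_\infty| \geq \epsilon^2$ yields $|\delta_n| = O(|d_{n-1}|)$.

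For convergence, once $2^n|\delta_n| \leq 1$ the elementary bound $|(1+\delta_n)^{2^n} - 1| \leq 2^{n+1}|\delta_n|$ gives $|\lambda_{n+1}/\lambda_n - 1| = O(2^n |d_{n-1}|)$; since $|d_n|$ decreases doubly-exponentially this is summable, so $(|\lambda_n|)$ stays bounded and $(\lambda_n)$ is Cauchy with limit $\lambda$. For the quantitative claim, if $k$ is the first integer satisfying $|d_k| \leq 2^{-P-k-c_1}$, then
\begin{equation*}
|\lambda - \lambda_{k+1}| \leq C_0 \sum_{m \geq k} 2^{m+1} |d_m|,
\end{equation*}
and iterating $|d_{m+1}| \leq K|d_m|^2$ shows this sum is dominated by its first term $2^{k+1}|d_k| \leq 2^{-P-c_1+1}$, the subsequent terms shrinking by doubly-exponential factors; choosing $c_1$ large enough in terms of $C$, $\epsilon$, $K$, and $|\lambda|$ yields $|\lambda - \lambda_{k+1}| \leq 2^{-P}$. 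The main obstacle is the middle estimate: one must carefully verify that all quadratic remainders in the Taylor expansion of $\sqrt{x_n y_n z_n t_n}$ are absorbed into terms of order $|d_{n-1}|$, which depends crucially on the uniform lower bounds from the preceding proposition.
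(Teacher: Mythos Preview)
Your proposal is correct and follows essentially the same approach as the paper: both control the multiplicative increments $\lambda_{n+1}/\lambda_n$ via the quantity $x_{n+1}^2/(x_n z_\infty)-1$ (the paper calls this $q_{n+1}-1$), bound it by $O(|z_{n-1}-t_{n-1}|)$ using the identity $x_{n+1}-y_{n+1}=\tfrac12(\sqrt{x_n}-\sqrt{y_n})(\sqrt{z_n}-\sqrt{t_n})$ together with the AGM estimates for $z_n,t_n$, and then sum the resulting doubly-exponentially decreasing series. The only cosmetic difference is that you obtain the key bound by a direct Taylor expansion of $x_{n+1}^2-x_n z_\infty$, whereas the paper first isolates $|x_{n+2}-\sqrt{x_{n+1}z_{n+1}}|$ and then derives $q_{n+2}-1$; the two computations are equivalent.
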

\begin{proof}
The point here is that once $z_n$ and $t_n$ are close enough, $x_{n+1}$ and $y_{n+1}$ are also close and the value of $\lambda_n$ does not change much after that. Let $c_1 \geq 0$, and take $n$ the first integer for which $|z_n - t_n| \leq \eta$ with $\eta = 2^{-P - c_1 - n}$. We then have for all $k \geq 0$~\cite[Theorem~1]{Dupont}:
\begin{equation*}
|z_{n+k} - t_{n+k} | \leq A^{2^k-1} \eta^{2^k}
\end{equation*}
with $A = \frac{\pi}{8 \min(|z_0|, |t_0|)}$. Furthermore, $|z_{n+1}-z_n| = \frac{1}{2} |z_n - t_n|$, so that
\begin{equation*}
|z_{\infty} - z_{n+k}| \leq \frac{1}{2} \sum_{i=k}^{\infty} A^{2^i-1} \eta^{2^i}
\end{equation*}
and we have $|z_{\infty} - z_{n+k}| \leq \frac{1}{A} \left(A \eta \right)^{2^k}$. Finally, using Equation~\eqref{FKaratsuba}, one can write
\begin{eqnarray*}
|x_{n+k+1}-y_{n+k+1}| & \leq & \frac{|\sqrt{x_n}-\sqrt{y_n}||\sqrt{z_n}-\sqrt{t_n}|}{2} \\
& \leq & \sqrt{C} \sqrt{2} \sqrt{|z_{n+k+1}-t_{n+1}|} \text{ \qquad since $z_{n+k+1}-t_{n+k+1} = \frac{(\sqrt{z_{n+k}}-\sqrt{t_{n+k}})^2}{2}$ } \\
& \leq & \sqrt{2AC} |z_{n+k} - t_{n+k}|.
\end{eqnarray*}

\medskip
Now, define $q_n = \frac{(x_n/z_{\infty})^2}{x_{n-1}/z_{\infty}}$, so that $\frac{\lambda_{n+1}}{\lambda_n} = q_n^{2^n}$. Note that if one makes the approximation $x_{n+k+1} = y_{n+k+1}$ and $z_{n+k+1} = t_{n+k+1} = z_{\infty}$, we have $x_{n+k+2} = \sqrt{x_{n+k+1} z_{\infty}}$ which gives $q_{n+k+2} = 1$. We take a closer look at those approximations:
\begin{eqnarray*}
|x_{n+k+2} - \sqrt{x_{n+k+1}} \sqrt{z_{n+k+1}}| & \leq & \frac{|\sqrt{y_{n+k+1}}-\sqrt{x_{n+k+1}}| |\sqrt{z_{n+k+1}}+\sqrt{t_{n+k+1}}|}{4} \\
&& + \frac{|\sqrt{y_{n+k+1}}+\sqrt{x_{n+k+1}}| |\sqrt{z_{n+k+1}}-\sqrt{t_{n+k+1}}|}{4} \\
& \leq & \frac{\sqrt{C}}{2} (|\sqrt{y_{n+k+1}}-\sqrt{x_{n+k+1}}| + |\sqrt{z_{n+k+1}}-\sqrt{t_{n+k+1}}|) \\
& \leq & \frac{\sqrt{C}}{2} \left( \frac{\sqrt{2AC}}{|\sqrt{x_{n+k}}+\sqrt{y_{n+k}}|} |z_{n+k}-t_{n+k}| + \sqrt{2A} |z_{n+k+1}-t_{n+k+1} | \right) \\
& \leq & C' (A \eta)^{2^k}
\end{eqnarray*}
so
\begin{eqnarray*}
q_{n+k+2} - 1 &=& \frac{(x_{n+k+2}/z_{\infty})^2 - x_{n+k+1}/z_{\infty}}{x_{n+k+1}/z_{\infty}} \\
& \leq & \frac{\frac{1}{z_{\infty}^2} (\sqrt{x_{n+k+1}} \sqrt{z_{n+k+1}} + C'(A \eta)^{2^k})^2 - x_{n+k+1}/z_{\infty}}{x_{n+k+1}/z_{\infty}} \\
& \leq & \frac{x_{n+k+1} z_{n+k+1}/z_{\infty}^2 - x_{n+k+1} / z_{\infty}}{x_{n+k+1}/z_{\infty}} + \frac{2C/z_{\infty}^2 C'(A \eta)^{2^k} + C'^2 (A \eta)^{2^{k+1}}}{x_{n+k+1}/z_{\infty}} \\
& \leq & \frac{1}{2 z_{\infty}} \sum_{i=k+1}^{\infty} (A \eta)^{2^i} + \frac{C C'(A \eta)^{2^k}}{x_{n+k+1} z_{\infty}} + \frac{C'^2 (A \eta)^{2^{k+1}}}{x_{n+k+1}/z_{\infty}} \\
& \leq & c \times (A \eta)^{2^k}
\end{eqnarray*}
This proves that $(q_n)$ converges quadratically to 1; using the equivalent $q_n^{2^n} - 1 \sim 2^n q_n$, we have that $(q_n^{2^n})$ also converges quadratically to 1, which proves the convergence of the sequence $(\lambda_n)$.

Finally we have
\begin{eqnarray*}
q_{n+2}^{2^{n+2}} ... q_{n+k}^{2^{n+k}} - 1 &\leq & \prod_{i=0}^{k-2} \left(1+ c (A\eta)^{2^i}\right)^{2^{n+i+2}} - 1 \\
& < & \exp\left(4 c \sum_{i=0}^{k-2} 2^{n+i} (A \eta)^{2^i}\right) -1 \\
& \leq & \frac{4 c \sum_{i=0}^{k-2} 2^{n+i} (A\eta)^{2^i}}{1-(4 c \sum_{i=0}^{k-2} 2^{n+i} (A \eta)^{2^i})/2}
\end{eqnarray*}
which proves, for $P$ large enough,
\begin{eqnarray*}
|\lambda - \lambda_{n+1}| & \leq & 8  c |\lambda_{n+1}|  \sum_{i=0}^{\infty} 2^{n+i} (A\eta)^{2^i} \\
& \leq & 16 c |\lambda_{n+1}| A \eta 2^n \\
& \leq & 16 A c |\lambda_{n+1}| 2^{-c_1} \times 2^{-P}
\end{eqnarray*}
This inequality proves that, at least for $P$ large enough, $|\lambda_{n+1}| \leq 2 |\lambda|$. Hence if we suppose $\log_2(32 A c |\lambda|) \leq c_1$, we have that $\lambda_{n+1}$ is an approximation of $\lambda$ with $P$ bits of absolute precision.
\end{proof}

\begin{algorithm}
\caption{Compute $F^{\infty}(x,y,z,y)$ \label{Finfty}}
\begin{algorithmic}[1]
\State Work at precision $\mathcal{P}$.
\State $n \gets 0$
\While{$|z - t| \leq 2^{-P - n - c_1}$}
	\State $n \gets n+1$
	\State $(x,y,z,t) \gets F(x,y,z,t)$
\EndWhile
\State $(x,y,z,t) \gets F(x,y,z,t)$
\State Return $\left( \left( \frac{x}{z} \right)^{2^{n+1}} \times z, z \right)$
\end{algorithmic}
\end{algorithm}

This gives an algorithm, Algorithm~\ref{Finfty}, to compute $F^{\infty}(x,y,z,t)$. According to \cite[Theorem~12]{Dupont}, if $n = \max( \log |\log| z_0/t_0||, 1) + \log(P+c_1)$, $a_n$ is an approximation of $\AGM\left(1, |\frac{z_0}{t_0}|\right)$ with relative precision $P$ bits. This proves that at the end of the algorithm, $n = O(\log P)$; in fact, we have more precisely $n \leq \log_2 P + C''$ with $C''$ a constant independent of $P$. Finally in the next subsection proves that one can take $\mathcal{P} = P + O(\log P)$, which means that this algorithm computes $F^{\infty}$ in $O(\mathcal{M}(P) \log P)$ bit operations.

\subsection{Loss of precision}
\label{section:Finftybitslost}

We use Theorem~\ref{theoremlossofprec} in order to evaluate the precision lost when computing $F^{\infty}(x,y,z,t)$. First note that the upper and lower bounds on the terms of the sequence allow us to write
\begin{eqnarray*}
\left(\frac{1}{\sqrt{|z_n|}} + \frac{1}{\sqrt{|t_n|}}\right) \left(\sqrt{|z_n|} + \sqrt{|t_n|}\right) & \leq & b/2 \\
\left(\frac{1}{\sqrt{|z_n|}} + \frac{1}{\sqrt{|t_n|}}\right) \left(\sqrt{|x_n|} + \sqrt{|y_n|}\right) & \leq & b \\
\left(\frac{1}{\sqrt{|x_n|}} + \frac{1}{\sqrt{|y_n|}}\right) \left(\sqrt{|z_n|} + \sqrt{|t_n|}\right) & \leq & b
\end{eqnarray*}
for some $b >1$; for instance, one can take $b = \max\left(1, 4 \sqrt{\frac{C}{\epsilon}}\right)$. We prove in Section~\ref{section:proveAllHypo} the existence of $\epsilon$ and $C$, and hence of $b$, for any values of theta we consider as arguments.

We first evaluate a bound on the error incurred when computing $F$ using Equation~\eqref{FKaratsuba}. Using those formulas allows us to get error bounds that are identical for $F_x$ and $F_y$, and $F_z$ and $F_t$. For simplicity, we assume that the error on $z$ and $t$ is the same, as well as the error on $x$ and $y$. This gives:
\begin{eqnarray*}
|\Re(F_x) - \Re(\tilde{F_x})| & \leq & \left(1 + k_z \left(\frac{1}{\sqrt{|z|}} + \frac{1}{\sqrt{|t|}}\right) \left(\sqrt{|x|} + \sqrt{|y|}\right) + k_x\left(\frac{1}{\sqrt{|x|}} + \frac{1}{\sqrt{|y|}}\right) \left(\sqrt{|z|} + \sqrt{|t|} \right) \right) 2^{-P} \\
|\Re(F_z) - \Re(\tilde{F_z})| & \leq & \left(1 + 2 k_z \left(\frac{1}{\sqrt{|z|}} + \frac{1}{\sqrt{|t|}} \right) \left(\sqrt{|z|} + \sqrt{|t|} \right) \right) 2^{-P}
\end{eqnarray*}
We thus get the following induction relations when looking at what happens when applying $F$ $n$ times in a row:
\begin{equation*}
k^{(n)}_x \leq 1 + b k^{(n-1)}_z + b k^{(n-1)}_x, \qquad k^{(n)}_z \leq 1 + b k^{(n-1)}_z
\end{equation*}
The last equation rewrites as $k^{(n)}_z + \frac{1}{b-1} \leq b\left(k^{(n-1)}_z + \frac{1}{b-1}\right)$, which gives $k^{(n)}_z \leq b^n\left(k_z+\frac{1}{b-1}\right)$. The induction for $x$ becomes $k^{(n)}_x \leq 1 + (k_z + \frac{1}{b-1}) b^n + b k^{(n-1)}_x$, which we solve:
\begin{equation*}
k^{(n)}_x \leq (1+b+...+b^n)k_x + nb^n \left(k_z + \frac{1}{b-1}\right) \leq b^n\left( nk_z + \frac{b+n}{b-1} \right)
\end{equation*}
For $b>1$, we have for $n$ large enough that $k^{(n)} \leq 2b^{2n}$, which ultimately means the number of bits lost when applying $F$ $n$ times in a row is bounded by $2n \log b + 1$.

Finally we need to find the number of bits lost in the computation of $\left(\frac{x_n}{z_{\infty}} \right)^{2^n}$. Call $E_k$ the error made after computing $k$ squarings in a row; we have the following recurrence relation:
\begin{equation*}
E_{k+1} \leq 2 + 4 E_k |x_n/z_{\infty}|^{2^k}
\end{equation*}
However, since $(\lambda_n)$ converges, $|\lambda_n| \leq \rho$ for some constant $\rho$; furthermore, for any $k \leq n$, one has $|x_n/z_{\infty}|^{2^k} \leq 1 + \frac{\rho}{z_{\infty}}$. Hence the recurrence becomes $E_{k+1} \leq 2 + 4 \left(1+ \frac{\rho}{z_{\infty}}\right) E_k$, which we solve to get
\begin{equation*}
E_n \leq 2 \frac{C'^{n+1}-1}{C'-1} \leq \frac{2}{C'-1} C'^{n+1}
\end{equation*}
with $C' = 4\left(1+ \frac{\rho}{z_{\infty}}\right)$. This means the number of bits lost after $n$ successive squarings is at the most $(n+1) \log C' + 1 - \log(C'-1)$.

Overall, if we write that the final value of $n$ in Algorithm~\ref{Finfty} is bounded by $\log_2 P + C''$, we have that the number of bits lost is bounded by
\begin{equation*}
(2 \log_2 b + \log C') (\log_2 P + C'') + \log C' + 2 - \log C'-1
\end{equation*}
which is $O(\log P)$.

\section{\texorpdfstring{Fast computation of $\theta$}{Fast computation of theta}}

We use a similar method as~\cite{Dupont}, that is to say finding a function $\mathfrak{F}$ such that
\begin{equation*}
\mathfrak{F}\left( \frac{\theta_{01}^2(z, \tau)}{\theta_{00}^2(z,\tau)}, \frac{\theta_{01}^2(0, \tau)}{\theta_{00}^2(0, \tau)} \right) = (z, \tau),
\end{equation*}
which can then be inverted using Newton's method. One can then compute $\theta(z,\tau)$ by, for instance, using Equation~\eqref{eq:OurAGMQuotients} and extracting a square root, determining the correct choice of sign by computing a low-precision (say, 10 bits) approximation of the value using the naive method; we use a different trick in our final algorithm (Algorithm~\ref{theUniformAlgo}). We build this function $\mathfrak{F}$ using $F^{\infty}$ as a building block.

\subsection{\texorpdfstring{Building $\mathfrak{F}$}{Building our function to invert}}

Just as with the algorithm for theta-constants, we use formulas derived from the action of $SL_2(\mathbb{Z})$ on the values of $\theta$ in order to get multiplicative factors depending on our parameters; this will allow us to build a function which computes $z, \tau$ from the values $\theta_i(z,\tau)$. We define the function $\mathfrak{F}$ as the result of Algorithm~\ref{mathfrakFalgo}.
\begin{algorithm}[H]
\caption{Compute $\mathfrak{F}\left(s,t \right)$ \label{mathfrakFalgo}}
\begin{algorithmic}[1]
\State $b\gets \sqrt{1 - t'^2}$
\Comment{Choose the root with positive real part \cite[Prop~2.9]{Cox} }
\State $a\gets \frac{1 - st}{b}$
\State $(x,y)\gets F^{\infty}\left(1, a, 1, b\right)$
\State $(q_1, q_2)\gets F^{\infty}\left(1, s, 1, t\right)$
\State Return $\left( \sqrt{\log\left( \frac{q_2 x}{q_1 y} \right) \times \frac{q_2/y}{-2\pi} }, i\frac{q_2}{y} \right)$, choosing the sign of the square root so that it has positive imaginary part.
\end{algorithmic}
\end{algorithm}

\begin{proposition} \label{propmathfrakF}
Let $\tau$ be such that $|\Re(\tau)| \leq 0.5$, $\Im(\tau) \geq \Value$ and $\Im\left(\frac{-1}{\tau}\right) \geq \Value$, and let $z$ be such that the conditions~\eqref{eq:conditionsz} are satisfied. Then
\begin{equation*}
\mathfrak{F}\left( \frac{\theta_{01}^2(z,\tau)}{\theta_{00}^2(z,\tau)}, \frac{\theta_{01}^2(0,\tau)}{\theta_{00}^2(0,\tau)} \right) = (z, \tau)
\end{equation*}
\end{proposition}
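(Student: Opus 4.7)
The plan is to identify each intermediate quantity of Algorithm~\ref{mathfrakFalgo} with an explicit theta value, and then close the loop via the modular transformation $\tau \mapsto -1/\tau$. Throughout, the two hypotheses on $\tau$ will be used to invoke Theorem~\ref{goodchoicethetas} twice: once at $\tau$ and once at $-1/\tau$.

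First, since $\Im(\tau) \geq \Value$ and $z$ satisfies~\eqref{eq:conditionsz}, Equation~\eqref{eq:OurAGMQuotients} gives $(q_1, q_2) = (1/\theta_{00}^2(z,\tau),\, 1/\theta_{00}^2(0,\tau))$. I then identify $b$ and $a$. Jacobi's quartic formula~\eqref{eq:Jacobi} gives $1 - t^2 = \theta_{10}^4(0,\tau)/\theta_{00}^4(0,\tau)$; since $\theta_{10}^2(0,\tau)/\theta_{00}^2(0,\tau) \sim 4q^{1/2}$ has positive real part under the hypothesis $|\Re(\tau)| \leq 1/2$ (because $|\arg q^{1/2}| \leq \pi/4$), the sign convention in line~1 of the algorithm forces $b = \theta_{10}^2(0,\tau)/\theta_{00}^2(0,\tau)$. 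Dividing the variety equation~\eqref{eq:Variety} by $\theta_{00}^2(z,\tau)\theta_{00}^2(0,\tau)$ yields $1 - st = \theta_{10}^2(z,\tau)\theta_{10}^2(0,\tau)/(\theta_{00}^2(z,\tau)\theta_{00}^2(0,\tau))$, so $a = \theta_{10}^2(z,\tau)/\theta_{00}^2(z,\tau)$.

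The key step is then to recognize $a,b$ as ratios of theta-squares at the transformed arguments $(z/\tau,-1/\tau)$ and $(0,-1/\tau)$. The Jacobi--Poisson transformation, which is the instance of~\eqref{eq:sl2andthetavalues} for $S: \tau \mapsto -1/\tau$ with the $8$-th root of unity pinned down by Poisson summation to $\sqrt{-i}$, reads
\[
\theta_{00}(z/\tau, -1/\tau) = \sqrt{-i\tau}\, e^{i\pi z^2/\tau}\, \theta_{00}(z,\tau),\qquad \theta_{01}(z/\tau, -1/\tau) = \sqrt{-i\tau}\, e^{i\pi z^2/\tau}\, \theta_{10}(z,\tau).
\]
Squaring and taking ratios kills the common prefactor, so $a = \theta_{01}^2(z/\tau,-1/\tau)/\theta_{00}^2(z/\tau,-1/\tau)$ and $b = \theta_{01}^2(0,-1/\tau)/\theta_{00}^2(0,-1/\tau)$. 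The second hypothesis $\Im(-1/\tau) \geq \Value$ together with the lemma preceding the statement (which ensures $z/\tau$ satisfies the first condition of~\eqref{eq:conditionsz} with respect to $-1/\tau$) then lets me invoke Theorem~\ref{goodchoicethetas} and~\eqref{eq:OurAGMQuotients} a second time, giving $(x, y) = (1/\theta_{00}^2(z/\tau, -1/\tau),\, 1/\theta_{00}^2(0, -1/\tau))$.

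Squaring the Jacobi--Poisson relation above I get $q_2/y = -i\tau$ and $q_1/x = -i\tau\, e^{2i\pi z^2/\tau}$. Hence $i\,q_2/y = \tau$ is the second coordinate returned, and $q_2 x/(q_1 y) = e^{-2i\pi z^2/\tau}$, so a direct calculation gives $\log(q_2 x/(q_1 y))\cdot (q_2/y)/(-2\pi) = z^2$, whose square root of positive imaginary part is $z$ (generically $\Im(z) > 0$). The main obstacle I expect is to justify the use of the principal branch for $\log(e^{-2i\pi z^2/\tau})$, i.e.\ that $|\Re(z^2/\tau)| < 1/2$: a short computation with $z = x+iy$, $\tau = u+iv$, $|x|\leq 1/8$, $0\leq y \leq v/4$, $|u|\leq 1/2$, $v \geq \Value$ bounds this by $3/32$. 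A secondary technical point is the choice of the root of unity $\zeta = \sqrt{-i}$ in the $S$-transformation, which the discussion following Theorem~2.4 only sketches; here it is pinned down unambiguously by Poisson summation, and is independent of $z$, which is exactly what the argument needs.
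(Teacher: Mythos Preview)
Your proof is correct and follows essentially the same route as the paper: identify $q_1,q_2$ via \eqref{eq:OurAGMQuotients}, recognize $a,b$ as $\theta_{10}^2/\theta_{00}^2$ via \eqref{eq:Jacobi} and \eqref{eq:Variety}, then rewrite these as $\theta_{01}^2/\theta_{00}^2$ at $(z/\tau,-1/\tau)$ using the $S$-transformation and apply Theorem~\ref{goodchoicethetas} again. You go a bit further than the paper by explicitly unwinding the final line of Algorithm~\ref{mathfrakFalgo} and by checking that the principal branch of the logarithm is the right one; your stated bound $3/32$ on $|\Re(z^2/\tau)|$ is slightly too tight (a direct estimate gives something closer to $0.16$ under the hypotheses), but the quantity is still comfortably below $1/2$, so the argument stands.
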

\begin{proof}
Equation~\eqref{eq:OurAGMQuotients} proves that $(q_1, q_2) = \left( \frac{1}{\theta_{00}(z,\tau)^2}, \frac{1}{\theta_{00}(0,\tau)^2} \right)$. Furthermore, using Jacobi's formula~\eqref{eq:Jacobi} and the equation defining the variety~\eqref{eq:Variety}, it is easy to see that $b = \frac{\theta_{10}(0,\tau)^2}{\theta_{00}(0,\tau)^2}$ and $a = \frac{\theta_{10}(z,\tau)^2}{\theta_{00}(z,\tau)^2}$.

The formulas in~\cite[Table~V, p.36]{Mumford1} give
\begin{equation*}
\left(\theta_{00}^2(z, \tau), \theta_{10}^2(z,\tau), \theta_{00}^2(0, \tau), \theta_{10}^2(0,\tau)\right) = \left(\lambda \theta_{00}^2\left(\frac{z}{\tau}, \frac{-1}{\tau}\right), \lambda \theta_{01}^2\left(\frac{z}{\tau}, \frac{-1}{\tau}\right), \mu \theta_{00}^2\left(0, \frac{-1}{\tau}\right), \mu \theta_{01}^2\left(0, \frac{-1}{\tau}\right) \right)
\end{equation*}
with $\lambda = \frac{e^{-2i \pi z^2/\tau}}{-i \tau}$, $\mu = \frac{1}{-i \tau}$. From the discussion in Section~\ref{section:moreArgReduction}, the conditions on $z, \tau$ allow us to apply Theorem~\ref{goodchoicethetas} to $\frac{z}{\tau}, \frac{-1}{\tau}$. This proves that
\begin{equation*}
F^{\infty}\left(\theta_{00}^2(z, \tau), \theta_{10}^2(z,\tau), \theta_{00}^2(0, \tau), \theta_{10}^2(0,\tau)\right) = \left( \frac{e^{-2i \pi z^2/\tau}}{-i \tau}, \frac{1}{-i \tau} \right),
\end{equation*}
and by homogeneity, $(x,y) = \left( \frac{e^{-2i \pi z^2/\tau}}{-i \tau \theta_{00}(z,\tau)^2}, \frac{1}{-i \tau \theta_{00}(0,\tau)^2} \right)$.
\end{proof}

This means that, starting from the knowledge of $z$ and $\tau$ with precision $P$ and a low-precision approximation of the quotients $\frac{\theta_{01}(z, \tau)}{\theta_{00}(z, \tau)}$ and $\frac{\theta_{01}(0, \tau)}{\theta_{00}(0, \tau)}$, one can compute those quotients with precision $P$ using Newton's method. This is Algorithm~\ref{fastthetas}.
\begin{algorithm}
    \caption{Compute $\theta_{00}^2(z,\tau), \theta_{01}^2(z,\tau), \theta_{00}^2(0,\tau), \theta_{01}^2(0,\tau)$ with precision $P$. \\ Input: $(z, \tau)$ with absolute precision $P$.}
    \label{fastthetas}
\begin{algorithmic}[1]
\State Compute $\theta_{00,01}^2(z,\tau)$, $\theta_{00,01}^2(0,\tau)$ with absolute precision $P_0$ using Algorithm~\ref{naive}.
\State $s \gets \frac{\theta_{01}(z,\tau)^2}{\theta_{00}(z,\tau)^2}$, $t \gets \frac{\theta_{01}(0,\tau)^2}{\theta_{00}(0,\tau)^2}$
\State $p \gets P_0$
\While{$p \leq \mathcal{P}'$}
  \State $p \gets 2p$
  \State Compute $a_{11} = \frac{\partial \mathfrak{F}_x}{\partial x}(s,t)$, $a_{22} = \frac{\partial \mathfrak{F}_y}{\partial y}(s,t)$, $a_{12} = \frac{\partial \mathfrak{F}_x}{\partial y}(s,t)$ with precision $p$.
  \State $(s,t) \gets (s,t) - (\mathfrak{F}(s,t)-(z,\tau)) \begin{pmatrix} a_{11} & a_{12} \\ 0 & a_{22}  \end{pmatrix}^{-1}$ \label{fastthetas:inverseJacobian}
\EndWhile
\State $(a,b)\gets F^{\infty}(1,s,1,t)$
\State $(a,b)\gets (1/a, 1/b), \qquad (s,t) = (sa, tb)$
\State Return $(a,s,b,t)$.
\end{algorithmic}
\end{algorithm}

We make a few remarks:
\begin{itemize}
\item Much in the same way as~\cite{ThomeEnge}, we find it preferable to use finite differences to compute the coefficients $a_{11}, a_{21}, a_{22}$ of the Jacobian, as it does not require the computation of the derivative of $\mathfrak{F}$, which could be tedious.
\item The value of $P_0$ has to be large enough that Newton's method converges. We note that, in general, a lower bound on $P_0$ may depend on the arguments; for instance,~\cite{Dupont} experimentally finds $4.53\Im(\tau)$ to be a suitable lower bound for $P_0$ when computing theta-constants. However, we outline in the next section a better algorithm which only uses the present algorithm for $z, \tau$ within a compact set; hence, $P_0$ can be chosen to be a constant, and we use in practice $P_0 = 30000$.
\end{itemize}

We do not provide a full analysis for this algorithm: we outline in the next section a better algorithm, which uses this algorithm as a subroutine, and we will provide a full analysis at that time. It is enough to say that the computation of $F^{\infty}$ at precision $p$ is done in time $O(\mathcal{M}(p) \log p)$ using Algorithm~\ref{Finfty}; however, this running time depends on $z, \tau$, since it depends on the bounds $C, \epsilon$ that one can write for $|x_n|, |y_n|, |z_n|, |t_n|$. Hence, the cost of evaluating $\mathfrak{F}$ at precision $p$ is $O(\mathcal{M}(p) \log p)$ bit operations, and the fact that we double the working precision at every step means that the algorithm is as costly as the last iteration. Furthermore, one should choose $\mathcal{P}'$ so that the final result is accurate with absolute precision $P$. This means compensating the loss of absolute precision incurred during the computation of $\mathfrak{F}$; in general, this only depends on $\Im(\tau)$ and linearly in $\log p$. Furthermore, we have the following proposition:
\begin{proposition} \label{newtondelta}
Let $x$ and $y$ be approximations of $a$ and $b$ with absolute precision $N$, and $x', y'$ the result of Step~\ref{fastthetas:inverseJacobian} in Algorithm~\ref{fastthetas} when using finite differences to approximate the Jacobian matrix. Then $x', y'$ are approximations of $a,b$ with precision $2P_0 - \delta$.
\end{proposition}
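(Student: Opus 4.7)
The plan is to establish the classical quadratic convergence of Newton's method, suitably adapted to the quasi-Newton variant that uses finite differences for the Jacobian and assumes the (2,1) entry is zero. First, I would justify this last assumption structurally. Inspecting Algorithm~\ref{mathfrakFalgo}, the second output $\mathfrak{F}_y(s,t) = i q_2 / y$ only involves quantities that depend on $t$ alone: $q_2$ is the second output of $F^{\infty}(1,s,1,t)$, which by definition equals $z_{\infty}$ and hence depends only on the last two inputs; and $y$ is obtained from $F^{\infty}(1,a,1,b)$ with $b = \sqrt{1-t^2}$ depending only on $t$. Therefore $\partial \mathfrak{F}_y / \partial s \equiv 0$, so the upper-triangular matrix used in Step~\ref{fastthetas:inverseJacobian} matches the true structure of the Jacobian exactly, and no error is introduced by the missing $(2,1)$ entry.

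Next I would carry out the standard Newton step analysis. Writing $(x,y) = (a+\alpha, b+\beta)$ with $|\alpha|, |\beta| \leq 2^{-N}$, and using that $\mathfrak{F}$ is analytic on a neighborhood of $(a,b)$ (which holds because $\theta_{00}(z,\tau), \theta_{00}(0,\tau)$ are nonzero under our hypotheses, making $F^{\infty}$ analytic at the arguments), Taylor's theorem gives
\begin{equation*}
\mathfrak{F}(x,y) - (z,\tau) = J(a,b) \cdot (\alpha,\beta)^T + R,
\end{equation*}
with $\|R\| \leq K_2 \cdot 2^{-2N}$ for a constant $K_2$ bounding the second derivatives of $\mathfrak{F}$ on the relevant neighborhood. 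The Newton step then yields
\begin{equation*}
(x',y') - (a,b) = \bigl(I - \tilde{J}^{-1} J(a,b)\bigr)(\alpha,\beta) - \tilde{J}^{-1} R,
\end{equation*}
where $\tilde{J}$ is the finite-difference approximation.

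Then I would analyze the quality of the finite-difference Jacobian. For a step size $h$ chosen as a small power of $2$ (say $h = 2^{-p/2}$) to balance truncation error and rounding at working precision $p$, the computed entries $\tilde{a}_{11}, \tilde{a}_{12}, \tilde{a}_{22}$ approximate $\partial_j \mathfrak{F}_i(x,y)$ to error $O(2^{-p/2})$. Combined with $\|J(x,y) - J(a,b)\| = O(2^{-N})$ by analyticity, this gives $\|\tilde{J} - J(a,b)\| = O(2^{-N} + 2^{-p/2})$. Since the working precision $p$ in the iteration is at least $2N$ by construction, this error is $O(2^{-N})$, and $\tilde{J}$ remains invertible with $\|\tilde{J}^{-1}\|$ bounded in terms of $\|J(a,b)^{-1}\|$. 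Substituting:
\begin{equation*}
\|(x',y') - (a,b)\| \leq \|\tilde{J}^{-1}\| \cdot \|\tilde{J} - J(a,b)\| \cdot 2^{-N} + \|\tilde{J}^{-1}\| \cdot K_2 \cdot 2^{-2N} = O(2^{-2N}),
\end{equation*}
which is the desired bound, with the constant $\delta$ absorbing $\log_2 \|J(a,b)^{-1}\|$, $\log_2 K_2$, and the loss-of-precision constants from Theorem~\ref{theoremlossofprec} when evaluating $\mathfrak{F}$ and performing the matrix operations.

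The main obstacle is verifying that $J(a,b)$ is invertible and that $\|J(a,b)^{-1}\|$, together with $K_2$, admit uniform bounds on the compact parameter range used in the algorithm. Invertibility of $J(a,b)$ is equivalent to $\mathfrak{F}$ being a local diffeomorphism at $(a,b)$, which is a consequence of Proposition~\ref{propmathfrakF} since $(z,\tau) \mapsto (\theta_{01}^2/\theta_{00}^2(z,\tau), \theta_{01}^2/\theta_{00}^2(0,\tau))$ is itself analytic with nonvanishing Jacobian on our domain. The uniformity of the bounds is precisely why the algorithm is only analyzed for $z,\tau$ in a compact set: on such a set, continuity of $J^{-1}$ and the second derivatives yields constants $K_2$ and $\|J(a,b)^{-1}\|$ depending only on the compact set, and hence a $\delta$ independent of $P$.
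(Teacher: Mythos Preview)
Your approach is correct and is essentially the standard Newton-with-finite-differences analysis that the paper has in mind: the paper does not actually prove this proposition but simply states that ``its proof can be adapted from the proof of~\cite[Theorem~12]{ThomeEnge}'', and what you have written is precisely such an adaptation. Your structural observation that $\mathfrak{F}_y$ depends only on $t$ (because both $q_2$ and $y$ are second coordinates of $F^{\infty}$, hence AGM-type limits of the last two arguments only) is a nice justification for the zero in the $(2,1)$ entry that the paper uses without comment; and your derivation of invertibility of $J(a,b)$ from Proposition~\ref{propmathfrakF} via the chain rule is the right way to close that gap.
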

Its proof can be adapted from the proof of~\cite[Theorem~12]{ThomeEnge}. In practice, we found $\delta = 4$ to be enough. Determining the number of bits lost at each step can be done in the same way as~\cite[p.~19]{ThomeEnge}: if $s^{(n-1)}$ and $s^{(n-2)}$ agree to $k$ bits, and $s^{(n)}$ and $s^{(n-1)}$ agree to $k'$ bits, the number of bits lost can be computed as $2k-k'$. In the end, working at precision $\mathcal{P}' = P + c \log P + d$, with $c, d$ independent of $P$ but functions of $z, \tau$, is enough to compensate all precision losses; this proves that the running time of this algorithm is asymptotically $O(\mathcal{M}(P) \log P)$.

\subsection{\texorpdfstring{Computing $\theta(z, \tau)$ in uniform quasi-optimal time}{Computing theta(z, tau) in uniform quasi-optimal time}}
\label{section:uniformalgorithm}

We now show an algorithm with uniform (i.e. independent in $z$ and $\tau$) quasi-optimal complexity that computes $\theta(z,\tau)$ for any $(z,\tau)$ satisfying conditions~\eqref{eq:conditions}. We use the same strategy as~\cite{Dupont}; namely, we use the naive algorithm when $\Im(\tau)$ is large; and for smaller values of $\Im(\tau)$, we put $\tau' = \frac{\tau}{2^s}$ so that $\tau'$ is within a compact set, then use Algorithm~\ref{fastthetas}, which complexity will be uniform since its arguments belong to a compact set. However we also need to divide $z$ by a power of 2 so that it also belongs to a compact set, and so that $(z', \tau')$ satisfies conditions~\eqref{eq:conditions} and~\eqref{eq:conditionsz}. Once $\theta\left(\frac{z}{2^t}, \frac{\tau}{2^s}\right)$ has been computed by the previous algorithm, we alternate between using Equation~\eqref{eq:Tauduplication00} to double the second argument and Equation~\eqref{eq:Zduplication} to double the first argument, until finally recovering $\theta(z,\tau)$. This is Algorithm~\ref{theUniformAlgo}.

\begin{algorithm}
\caption{Compute $\theta(z, \tau)$ for $\tau \in \mathcal{F}$ and $z$ reduced} \label{theUniformAlgo}
\begin{algorithmic}[1]
\If{$P \leq 25 \Im(\tau)$}
  \State Compute $\theta_{00, 01, 10}(z, \tau), \theta_{00, 01, 10}(0, \tau)$ with precision $P$ using the naive method (Algorithm~\ref{naive} + Section~\ref{section:theta10naive}). \label{uniformalgo:naive}
\Else
  \State Take $s \in \mathbb{N}$ such that $1 \leq |\tau|/2^s < 2$
  \State Put $\tau_1 = \frac{\tau}{2^s}$ and $z_1 = \frac{z}{2^s}$, so that $\Im(z_1) \leq \Im(\tau_1)/2$.
  \State Put $z_2 = z_1/4$ and $\tau_2 = \tau_1/2$.
  \State Compute approximations of absolute precision $\mathcal{P}$ of $\theta_{00}^2(z_2, \tau_2), \theta_{01}^2(z_2, \tau_2)$, $\theta_{00}^2(0, \tau_2)$, and $\theta_{01}^2(0, \tau_2)$ using Algorithm~\ref{fastthetas}. \label{uniformalgo:subroutine}
  \State Compute $\theta_{00}^2(z_2, \tau_1), \theta_{01}^2(z_2, \tau_1), \theta_{00}^2(0, \tau_1), \theta_{01}^2(0, \tau_1)$ using Equation~\eqref{eq:Tauduplication00}, and $\theta_{10}^2(z_2, \tau_1)$ using Equation~\eqref{eq:Tauduplication10} and $\theta_{10}^2(0,\tau_1)$ using its equivalent in $z=0$. \label{uniformalgo:trickfor10}
  \State Compute $\theta_{00,01,10}(0,\tau_1)$. \label{uniformalgo:squareroot}
  \State Compute $\theta_{00,01}(z_1/2, \tau_1)$ using Equation~\eqref{eq:Zduplication}. \label{uniformalgo:zDoubling}
  \For{i = 1 .. s} \label{uniformalgo:beginfor}
    \State Compute $\theta_{00}^2(0, 2^i\tau_1), \theta_{01}^2(0, 2^i\tau_1)$ using the AGM.
  	\State Compute $\theta_{00}^2(2^{i-2}z_1, 2^i\tau_1), \theta_{01}^2(2^{i-2}z_1, 2^i\tau_1)$ using Equation~\eqref{eq:Tauduplication00}. \label{uniformalgo:taudoubling}
  	\State If $i=s$, compute also $\theta_{10}^2(0, 2^i\tau_1)$ using the equivalent of Equation~\eqref{eq:Tauduplication10} in $z=0$, then $\theta_{10}(0,2^i\tau_1)$ by taking the square root. \label{uniformalgo:squareroot10}
  	\State Compute $\theta_{10}^2(2^{i-2}z_1, 2^i\tau_1)$ using Equation~\eqref{eq:Tauduplication10}. \label{uniformalgo:taudoubling10}
    \State Compute $\theta_{00,01}(0,2^i\tau_1)$. \label{uniformalgo:squarerootInLoop}
  	\State Compute $\theta_{00}(2^{i-1} z_1, 2^i \tau_1), \theta_{01}(2^{i-1} z_1, 2^i \tau_1)$ using Equation~\eqref{eq:Zduplication}. \label{uniformalgo:duplicationInLoop}
  \EndFor \label{uniformalgo:endfor}
  \State Compute $\theta_{10}^2(2^{s-1} z_1, 2^s \tau_1)$ using Equation~\eqref{eq:Variety}. \label{uniformalgo:recover10}
  \State Compute $\theta_{00,01,10}(z, \tau)$ using Equation~\eqref{eq:Zduplication}. \label{uniformalgo:finalDuplication}
\EndIf
\end{algorithmic}
\end{algorithm}
A few notes on this algorithm:
\begin{itemize}
\item We note that, at several steps of the algorithm (e.g. Steps~\ref{uniformalgo:squareroot}, \ref{uniformalgo:squareroot10}, \ref{uniformalgo:squarerootInLoop}) we need to compute theta-constants from their square. The correct choice of signs is given by the proof of Theorem~\ref{goodchoicethetas}, which shows that $\Re(\theta_{00}(0,\tau)) \geq 0$ and $\Re(\theta_{01}(0,\tau)) \geq 0$; and furthermore, since $\Re(q^{1/4}) \geq |q|^{1/4} \cos(\pi/8)$, we also have $\Re(\theta_{10}(0,\tau)) \geq 0$.
\item Taking $\tau_2 = \tau_1/2$ allows us to use Equation~\eqref{eq:Tauduplication10} in step~\ref{uniformalgo:trickfor10} instead of Equation~\eqref{eq:Jacobi} and Equation~\eqref{eq:Variety}, which is more efficient and loses fewer bits.
\item The knowledge of $\theta_{10}^2(2^{i-2} z_1, 2^i \tau)$ is enough for the $z$-duplication formulas of step~\ref{uniformalgo:duplicationInLoop}, and it can be computed directly from $\theta_{00}$ and $\theta_{01}$ using Equation~\eqref{eq:Tauduplication10}.
\item Computing $\theta_{11}(z,\tau)$ is also possible; one should use a partial summation if $P \leq 25 \Im(\tau)$. In the other case, since all the $z$-duplication formulas for $\theta_{11}(z,\tau)$ involve a division by $\theta_{10}(0,\tau)$~\cite[p.22]{Mumford1}, it is just as efficient to simply use Equation~\eqref{eq:recoverTheta11} after Step~\ref{uniformalgo:finalDuplication}, then extract the square root. The square root extraction loses $O(\Im(\tau)) = O(P)$ bits, and this also gives a quasi-optimal algorithm.
\end{itemize}

\subsection{Proving the correctness of the algorithm}

This section is devoted to proving the following theorem:
\begin{theorem} \label{uniformtheorem}
For any $\tau, z$ satisfying conditions~\eqref{eq:conditions}, Algorithm~\ref{theUniformAlgo} with $\mathcal{P} = 2P$ computes $\theta_{00,01,10}(z,\tau), \theta_{00,01,10}(0,\tau)$ with absolute precision $P$ in $O(\mathcal{M}(P) \log P)$ bit operations.
\end{theorem}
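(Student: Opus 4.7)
The plan is to follow the case split built into Algorithm~\ref{theUniformAlgo}. In the easy regime $P \leq 25\,\Im(\tau)$, I would simply invoke Theorem~\ref{naiveanalysis} together with the discussion in Section~\ref{section:theta10naive}: since $\sqrt{P/\Im(\tau)} \leq 5$ in this range, the naive cost $O\!\left(\mathcal{M}(P)\sqrt{P/\Im(\tau)}\right)$ collapses to $O(\mathcal{M}(P))$, and the output has the claimed absolute precision. This case is essentially a citation.

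For the substantive case $P > 25\,\Im(\tau)$, I would proceed in three stages. First, I check that the reduced arguments passed to Algorithm~\ref{fastthetas} at Step~\ref{uniformalgo:subroutine} meet the hypotheses of Proposition~\ref{propmathfrakF}: because $s$ is chosen with $|\tau_1| \in [1,2)$ and $|\Re(\tau_1)|\leq 1/2$, one gets $\Im(\tau_1) \geq \sqrt{3}/2$ and hence $\Im(\tau_2) \geq \sqrt{3}/4 > \Value$; from $|\tau_2| \leq 1$ one also gets $\Im(-1/\tau_2) > \Value$; and $z_2 = z_1/4$ satisfies $|\Re(z_2)| \leq 1/8$ and $\Im(z_2) \leq \Im(\tau_2)/4$, which is exactly condition~\eqref{eq:conditionsz}. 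So Algorithm~\ref{fastthetas} returns the correct theta-squares at $(z_2,\tau_2)$. Second, I prove correctness of the main loop by induction on $i$: Propositions~\ref{propTauduplication0} and~\ref{propTauduplicationZ} together with Equations~\eqref{eq:Tauduplication10} and~\eqref{eq:Zduplication} show that, assuming the inductive hypothesis, the algorithm correctly advances $(2^{i-2}z_1, 2^{i-1}\tau_1) \to (2^{i-2}z_1, 2^i\tau_1) \to (2^{i-1}z_1, 2^i\tau_1)$; the square-root extractions at Steps~\ref{uniformalgo:squareroot}, \ref{uniformalgo:squareroot10} and~\ref{uniformalgo:squarerootInLoop} use the signs dictated by the proof of Theorem~\ref{goodchoicethetas}, extended to $\theta_{10}(0,\tau)$ via $\Re(q^{1/4}) \geq |q|^{1/4}\cos(\pi/8) > 0$. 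At $i=s$ the algorithm holds $\theta$ at $(z/2,\tau)$, and Step~\ref{uniformalgo:finalDuplication} delivers $\theta_{00,01,10}(z,\tau)$ and the theta-constants.

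The third stage is the combined precision/complexity bound. Since $\tau_1$ lies in a fixed compact set and $\tau = 2^s \tau_1$, we have $s = O(\log\Im(\tau)) = O(\log P)$. Algorithm~\ref{fastthetas} on inputs in a compact set has a uniform $O(\mathcal{M}(\mathcal{P})\log \mathcal{P})$ cost, and every loop iteration is a constant number of multiplications, additions and square roots at precision $\mathcal{P}$, giving $O(\mathcal{M}(\mathcal{P})\log P)$ for the whole loop; with $\mathcal{P} = 2P$ this is $O(\mathcal{M}(P)\log P)$. For precision I would use Theorem~\ref{theoremlossofprec} operation by operation: the dominant losses come from the square roots of $\theta_{10}^2(0, 2^i\tau_1)$ and from the division by $\theta_{10}^3(0, 2^i\tau_1)$ in the $z$-duplication Step~\ref{uniformalgo:duplicationInLoop}, which together cost $O(2^i\Im(\tau_1))$ bits per iteration. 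Because the losses grow geometrically in $i$, the last iteration dominates and the total loss across the loop is $O(\Im(\tau))$; adding the $O(\log P)$ bits lost inside Algorithm~\ref{fastthetas} and in $F^{\infty}$ leaves a total loss bounded by $c\,\Im(\tau) + O(\log P)$ for an explicit constant $c$. The hypothesis $\Im(\tau) < P/25$ then makes this strictly less than $P$, so $\mathcal{P} = 2P$ is comfortably sufficient.

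The main obstacle is the precision bookkeeping for the loop: each square-root extraction and each $z$-duplication division can lose many bits when $\theta_{10}$ is small, and the output of one iteration is the input of the next, so one has to check that errors do not compound catastrophically. The crucial observation that makes the analysis work is the geometric growth of losses in $i$, which implies the cumulative loss is of the same order as the loss at the final iteration, namely $O(\Im(\tau))$; once this is established, the margin provided by the regime $P > 25\,\Im(\tau)$ and the choice $\mathcal{P} = 2P$ is evident.
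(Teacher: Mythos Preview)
Your overall architecture matches the paper's: the same case split on $P$ versus $25\,\Im(\tau)$, the same verification that $(z_2,\tau_2)$ satisfies the hypotheses of Proposition~\ref{propmathfrakF}, correctness of the loop via Propositions~\ref{propTauduplication0}, \ref{propTauduplicationZ} and Equations~\eqref{eq:Tauduplication10}, \eqref{eq:Zduplication}, the sign determination for square roots, and the complexity count via $s=O(\log P)$.

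The precision analysis for the main loop, however, rests on a misreading of Algorithm~\ref{theUniformAlgo}. You assert that each iteration $i$ loses $O(2^i\Im(\tau_1))$ bits from extracting $\sqrt{\theta_{10}^2(0,2^i\tau_1)}$ and from dividing by $\theta_{10}^3(0,2^i\tau_1)$ in Step~\ref{uniformalgo:duplicationInLoop}. Neither operation occurs there. Step~\ref{uniformalgo:duplicationInLoop} computes only $\theta_{00}$ and $\theta_{01}$, whose $z$-duplication formulas~\eqref{eq:Zduplication} divide by $\theta_{00}^3(0,\cdot)$ and $\theta_{01}^3(0,\cdot)$, both bounded below by Theorem~\ref{boundonthetas}; and Step~\ref{uniformalgo:squareroot10} is explicitly guarded by ``If $i=s$'', so the square root of $\theta_{10}^2(0,\cdot)$ is taken exactly once. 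This is not an accident: the algorithm is engineered precisely so that the per-iteration loss is a \emph{constant} number of bits (the paper obtains roughly $10.5$ for the $\tau$-duplication and $16$ for the $z$-duplication), giving a total loop loss of $O(s)=O(\log P)$. The large $\Im(\tau)$-dependent losses are concentrated in Steps~\ref{uniformalgo:squareroot10} (once, at $i=s$), \ref{uniformalgo:recover10} and~\ref{uniformalgo:finalDuplication}, and the paper sums them explicitly to $(2\pi+\pi/8+3\pi)\log_2 e\,\Im(\tau)\leq 23.3\,\Im(\tau)$, which is what makes the comparison with $25\,\Im(\tau)<P$ go through and justifies $\mathcal{P}=2P$.

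Your geometric-sum argument lands on the right order of magnitude $O(\Im(\tau))$, but for the wrong reason; carried out rigorously against the actual steps it would collapse (the claimed per-iteration losses simply are not there), and it gives you no control over the explicit constant in front of $\Im(\tau)$, which is exactly what the theorem's choice $\mathcal{P}=2P$ hinges on.
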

As we discussed in Section 2, this also gives an algorithm that computes $\theta(z,\tau)$ for any $(z, \tau) \in \mathbb{C} \times \mathcal{H}$; one simply needs to reduce $\tau$ in $\tau' \in \mathcal{F}$, then reduce $z$ in $z'$, and deduce $\theta(z,\tau)$ from $\theta(z', \tau')$ using Equations~\eqref{eq:quasiperiodicitytheta} and \eqref{eq:sl2andthetavalues}. This causes a loss of absolute precision which depends on $z$ and $\tau$, and this algorithm is no longer uniform.

We need to perform an analysis of the number of bits lost by the algorithm; once again, we use Theorem~\ref{theoremlossofprec}. For each step, we proceed as follows: assuming the error on all the quantities is bounded by $k$, determine a factor $x$ such that the error on the quantities we get after the computation is bounded by $xk$, then declare the number of bits lost in this step to be $\log x$; this gives a very loose upper bound, but simplifies the process.

Finally, we also need to prove that the hypotheses made in Sections~\ref{section:convergence} and~\ref{section:Finftybitslost} are verified in Step~\ref{uniformalgo:subroutine} of the algorithm. This is necessary to prove that the sequence $(\lambda_n)$ we consider is quadratically convergent, and that the number of bits lost is only $O(\log P)$. We prove this in Section~\ref{section:proveAllHypo}, which then completes the proof that the running time is indeed uniform and quasi-optimal.

\subsubsection{Naive algorithm}
\label{section:subsub1}

As we showed in Theorem~\ref{naiveanalysis}, the number of bits lost when using the naive algorithm is $\log B + 7$, although this constant could be made even smaller when taking into account that $P \leq 25 \Im(\tau)$. Furthermore, $\sqrt{\frac{P}{\Im(\tau)}} \leq 25$, which means the running time of this step is asymptotically dominated by the cost of the computation of $\pi$, $q$ and $w$ with precision $\mathcal{P} = P + \log B + 7$, which takes $O(\mathcal{M}(P) \log P)$ bit operations.

\subsubsection{Square root extraction}

Steps~\ref{uniformalgo:squareroot}, \ref{uniformalgo:squarerootInLoop} and~\ref{uniformalgo:squareroot10} require extracting square roots, which multiply the error by $\frac{1}{\sqrt{|z|}}$. We prove in the next subsection that $|\theta_{00,01}(0, 2^i\tau_1)| \geq 0.859$ for $i = [1 \ldots s]$. Hence, each extraction of square root loses at most 4 bits: step~\ref{uniformalgo:squareroot} loses 4 bits, and step~\ref{uniformalgo:squarerootInLoop} loses $4s \leq 4 \log P$ bits.

Step~\ref{uniformalgo:squareroot10} loses more bits since $\theta_{10}(0,\tau)$ is smaller; indeed, $|\theta_{10}(0,\tau)| \sim |q|^{1/4}$. This means the number of bits lost during this step is bounded by $\frac{\log |q|}{8} = \frac{\pi}{8} \log_2 e \Im(\tau)$.

\subsubsection{Duplication formulas and finishing the proof of correctness}
\label{section:subsub2}

The algorithm uses both $\tau$-duplication formulas and $z$-duplication formulas, and we need to analyse how many bits are lost for each application of those formulas.

The $\tau$-duplication formulas are nothing more than applying $F$ to $\theta_{00,01}^2(z, \tau)$ and $\theta_{00,01}^2(0, \tau)$. However, the analysis here is simpler than in section~\ref{section:Finftybitslost}, because we do not need to compute the square roots of $\theta_{00,01}(z, \tau)$, since they are directly given by step~\ref{uniformalgo:duplicationInLoop}. Hence we just need to account for the error of the additions, subtractions and multiplications in Equation~\eqref{FKaratsuba}; since all the quantities are bounded, this means each step loses a constant number $g$ of bits (our analysis shows that $g \leq 10.48$). In the end, the $\tau$-duplication formulas account for the loss of $g \times s \leq g \log P$ bits of precision.

As for the $z$-duplication formulas, we need to perform several analyses. Looking at Equation~\eqref{eq:Zduplication}, one needs to evaluate the fourth power of theta functions, then add them; then evaluate the third power of theta constants, then perform a division. Computing the error using the formulas from \ref{theoremlossofprec} is rather straightforward when one has bounds on those quantities, which are given by the following theorem:
\begin{theorem} \label{boundonthetas}
Assume $\Im(\tau) > \sqrt{3}/2$. Then
\begin{equation*}
0.859  \leq |\theta_{00,01}(0,\tau)| \leq 1.141,  \qquad |\theta_{10}(0,\tau)| \leq 1.018
\end{equation*}
We also have:
\begin{itemize}
\item Suppose that $0 \leq \Im(z) \leq \frac{\Im(\tau)}{8}$, as in Steps~\ref{uniformalgo:zDoubling} and~\ref{uniformalgo:duplicationInLoop}. Then $|w|^{-2n} \leq e^{n \pi \Im(\tau)/4}$ and
\begin{equation*}
0.8038 \leq |\theta_{00,01}(z,\tau)| \leq 1.1962 \qquad |\theta_{10}(z,\tau)| \leq 1.228
\end{equation*}
\item Suppose that $0 \leq \Im(z) \leq \frac{\Im(\tau)}{4}$, as in Steps~\ref{uniformalgo:finalDuplication}. Then $|w|^{-2n} \leq e^{n \pi \Im(\tau)/2}$ and
\begin{equation*}
0.6772 \leq |\theta_{00,01}(z,\tau)| \leq 1.3228 \qquad |\theta_{10}(z,\tau)| \leq 1.543
\end{equation*}
\end{itemize}
\end{theorem}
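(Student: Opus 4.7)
The plan is to estimate each of the six quantities directly from its defining series, using only the hypotheses on $\Im(\tau)$ and $\Im(z)$. Write $q = e^{i\pi\tau}$ and $w = e^{i\pi z}$. The hypothesis $\Im(\tau) > \sqrt{3}/2$ gives $|q| \leq e^{-\pi\sqrt{3}/2} < 0.066$, which is the only input needed for the three bounds at $z=0$. For the $z$-dependent statements we also use $\Im(z) \geq 0$ (so $|w|^{2n} \leq 1$ for $n \geq 1$) together with the two hypotheses on $\Im(z)/\Im(\tau)$, which give the claimed inequalities $|w|^{-2n} \leq e^{n\pi\Im(\tau)/4}$ or $e^{n\pi\Im(\tau)/2}$ by direct substitution.

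For the theta constants, I would write $\theta_{00,01}(0,\tau) = 1 + 2\sum_{n\geq 1}(\pm 1)^n q^{n^2}$ and $\theta_{10}(0,\tau) = 2\sum_{n\geq 0}q^{(n+1/2)^2}$, and bound
\begin{equation*}
|\theta_{00,01}(0,\tau) - 1| \;\leq\; 2|q| + \frac{2|q|^4}{1-|q|}, \qquad |\theta_{10}(0,\tau)| \;\leq\; 2|q|^{1/4} + \frac{2|q|^{9/4}}{1-|q|}.
\end{equation*}
Numerical evaluation at the bound $|q| \leq e^{-\pi\sqrt{3}/2}$ yields the three inequalities of the first statement.

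For nonzero $z$, I would exploit the pairing
\begin{equation*}
\theta_{00}(z,\tau)-1 = \sum_{n\geq 1} q^{n^2}(w^{2n}+w^{-2n}), \qquad \theta_{10}(z,\tau) = \sum_{n\geq 0} q^{(n+1/2)^2}(w^{2n+1}+w^{-(2n+1)}),
\end{equation*}
and bound each summand by $|q|^{n^2}(1 + |q|^{-n\alpha})$, respectively $|q|^{(n+1/2)^2}(1 + |q|^{-(2n+1)\alpha/2})$, where $\alpha = 1/4$ in the first regime and $\alpha = 1/2$ in the second. The dominant contribution is then $|q| + |q|^{1-\alpha}$ for $\theta_{00,01}$ and $|q|^{1/4} + |q|^{(1-\alpha)/2\cdot 1/2 + 1/4}$, i.e.\ $|q|^{1/4} + |q|^{1/4-\alpha/2}$, for $\theta_{10}$, with rapidly-decaying tails controlled by $|q|^{4-2\alpha}/(1-|q|)$ and $|q|^{9/4-3\alpha/2}/(1-|q|)$ respectively. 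Plugging in $|q| \leq e^{-\pi\sqrt{3}/2}$ produces the claimed numerical constants. The bound for $\theta_{01}$ is identical to that for $\theta_{00}$, since replacing $z$ by $z+1/2$ only multiplies each term $q^{n^2}w^{\pm 2n}$ by a sign.

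The main obstacle is essentially bookkeeping: the constants in the statement are close to sharp, so the tail estimates must be kept under control without overbounding. No structural difficulty arises; once the pairing and the bounds on $|w|^{\pm 2n}$ are fixed, everything reduces to monotone geometric-like sums that can be evaluated explicitly.
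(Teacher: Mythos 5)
Your approach matches the paper's: both estimate each value directly from the defining series, split off the dominant low-$n$ terms, and bound the tail by a geometric series using $|q| \leq e^{-\pi\sqrt{3}/2}$ and $|w|^{-1} \leq |q|^{-\alpha/2}$. The constants at $z=0$, the $\theta_{00,01}$ bound with $\alpha = 1/2$, and the general pairing are all right.

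There is one recurring slip in the tail estimate. Each term contributes two exponents, $n^2$ and $n^2 - n\alpha$ for $\theta_{00,01}$ (resp.\ $(n+1/2)^2$ and $(n+1/2)^2 - (2n+1)\alpha/2$ for $\theta_{10}$), and the smallest gap between consecutive sorted exponents is \emph{not} $1$; it is controlled by $\alpha$. Writing the tail as $|q|^{\beta}/(1-|q|)$ with $\beta$ the leading exponent therefore \emph{under}estimates the tail. Concretely, for $\theta_{10}$ with $\alpha = 1/4$ the consecutive exponents $15/8$ and $18/8$ differ by only $3/8$, so the correct denominator is $1-|q|^{3/8}$; with $|q| \approx 0.0658$ the true tail is $\approx 0.0095$, not $\approx 0.0065$ as your formula gives. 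Your total for that case comes out to $\approx 1.2249$, which is below $1.228$ but is not a valid bound. The paper avoids this by either listing several explicit terms before the geometric cutoff (for $\theta_{00,01}$, $\alpha = 1/4$: $|q|^{3/4} + |q| + |q|^{7/2} + |q|^4 + |q|^{8.25} + |q|^9 + |q|^{15}/(1-|q|)$) or by using the correct gap in the denominator ($|q|^{15/8}/(1-|q|^{3/8})$ for $\theta_{10}$). After this correction the true totals land at $\approx 1.2279$ and $\approx 0.1958$, still inside the claimed constants, so your method and conclusions are sound; only the stated generic tail formula needs the denominator fixed to $1-|q|^{\delta}$ with $\delta$ the minimal exponent gap (or replaced by explicit enumeration of the first few terms).

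Two smaller points: the paper cites Dupont's $|\theta_{00,01}(0,\tau)-1| \leq \frac{2|q|}{1-|q|}$ rather than rederiving it, and for the $\theta_{10}$ tail in the regime $\alpha=1/2$ the paper's written bound $|q|^{5/4}/(1-|q|)$ is a deliberate (looser) overbound of the actual $|q|^{3/2}/(1-|q|^{3/4})$; neither changes the substance of the argument.
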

\begin{proof}
The bounds on the theta-constants come from~\cite[p.~5]{Dupont}, which proves $|\theta_{00,01}(0,\tau)-1| \leq \frac{2|q|}{1-|q|}$. The techniques are the same as the proof of Lemma~\ref{realPartsOfQuotients} or Theorem~\ref{naiveboundtheta}. This gives in the first case
\begin{eqnarray*}
|\theta_{00,01}(z,\tau) -1| & \leq & |q|^{3/4} + |q| + |q|^{7/2} + |q|^4 + |q|^{8.25} + |q|^9 + \frac{|q|^{15}}{1-|q|} \\
& \leq & 0.1962 \text{ \qquad since $\Im(\tau) \geq \sqrt{3}/2$ } \\
|\theta_{10}(z,\tau) - q^{1/4}(w + w^{-1})| & \leq & \frac{q^{15/8}}{1-q^{3/8}} \leq 0.009
\end{eqnarray*}
so $|\theta_{10}(z,\tau)| \leq |q|^{1/4}(|w|+|w|^{-1}) + 0.009 \leq 1.228$. In the second case:
\begin{eqnarray*}
|\theta_{00,01}(z,\tau) -1| & \leq & |q|^{1/2} + |q| + \frac{|q|^{3}}{1-|q|} \leq  0.3228 \\
|\theta_{10}(z,\tau) - q^{1/4}(w + w^{-1})| & \leq & |q|^{5/4} + |q|^{9/4} +  ... \leq \frac{q^{5/4}}{1-q} \leq 0.0357
\end{eqnarray*}
\end{proof}
Combining these bounds with formulas from Theorem~\ref{theoremlossofprec} gives the following bounds:
\begin{eqnarray*}
\text{error}(\theta_{00}(2^i z_1, 2^{i+1} \tau_1)) & \leq & ( 20050.518 + 1818.032 k_{\theta_{01}^z} + 1966.823 k_{\theta_{10}^z} + 33516 k_{\theta_{00}^0} ) 2^{-P} \\
\text{error}(\theta_{01}(2^{i} z_1, 2^{i+1} \tau_1)) & \leq & ( 20050.518 + 1818.032 k_{\theta_{00}^z} + 1966.823 k_{\theta_{10}^z} + 33516 k_{\theta_{01}^0} ) 2^{-P} \\
\end{eqnarray*}
which means losing at most 16 more bits of precision.

Step~\ref{uniformalgo:recover10} causes the loss of a greater number of bits. We use Equation~\eqref{eq:Variety} instead of the third $z$-duplication formula, because dividing by $\theta_{10}(0,\tau)^2$ loses less bits than dividing by $\theta_{10}(0,\tau)^3$, and we only need the knowledge of $\theta_{10}^2(2^{s-1}z, 2^s\tau)$ for the next step anyway. This amounts to computing:
\begin{equation*}
\theta_{10}^2(z,\tau) = \frac{\theta_{00}^2(z,\tau) \theta_{00}^2(0,\tau) - \theta_{01}^2(z,\tau) \theta_{01}^2(0,\tau)}{\theta_{10}^2(0,\tau)}
\end{equation*}
Computing the numerator multiplies the error by a factor at most 60, and the norm of this numerator is bounded by 4.557; we then get from Theorem~\ref{theoremlossofprec} that the error is bounded by $\frac{m}{|\theta_{10}(0,\tau)|^8} \sim m |q|^{-2}$, with $m \leq 1600$. In the end, we lose at most $2 \pi \log_2 e \Im(\tau) + 11$ bits.

Finally, we also lose a great number of bits during the last application of the $z$-duplication formulas in step~\ref{uniformalgo:finalDuplication}, since the formula for $\theta_{10}(z,\tau)$ requires dividing by $\theta_{10}(0,\tau)^3$. The error is thus multiplied by $|q|^{-3}$ up to a constant factor; this means a loss of $3\pi~\log_2 e \Im(\tau)$ bits, plus a constant.

In the end, we see that the number of lost bits is bounded by $(2 \pi + \pi/8 + 3\pi)\Im(\tau)~\log_2 e + c \log P + d$; given that $P \geq 25 \Im(\tau)$, and that $5.125 \pi \log_2 e \leq 23.3$, the number of bits lost is thus less than $P$. This means that $\mathcal{P} = 1.93 P + c \log P + d \leq 2P$ is enough to get a result which is accurate to absolute precision $P$; this also means that we indeed never have an error $k$ bigger than $2^{(2P)/2}$, which is necessary to apply Theorem~\ref{theoremlossofprec}.

\subsubsection{Proof of quadratic convergence and quasi-optimal running time}
\label{section:proveAllHypo}

It remains to prove that the complexity is the right one. If $P \geq 25 \Im(\tau)$, $\log_2 P > \log_2 \Im(\tau) + 4.7$, which means $s \leq \log P$ and the cost of Steps~\ref{uniformalgo:beginfor} to \ref{uniformalgo:endfor} is $O(\mathcal{M}(\mathcal{P}) \log P)$. We verify that conditions~\eqref{eq:conditions} and~\eqref{eq:conditionsz} hold:
\begin{eqnarray*}
|\Re(\tau_2)| \leq 1/2^{s+2} \leq 1/4, && \Value \leq \frac{\sqrt{3}}{4} \leq \Im(\tau_2) \leq 1, \\
|\Re(z_2)| \leq 1/2^{s+3} \leq 1/8, && 0 \leq \Im(z_2) \leq \frac{\Im(\tau_2)}{4}.
\end{eqnarray*}
This means the choices of signs are always good, and hence our result is indeed (squares of) theta-functions and theta-constants.

We also need prove that there is a $C > 1$ such that, for all $z_2, \tau_2$ that we consider,
\begin{equation*}
\frac{\theta_{01}^2(0,\tau_2)}{\theta_{00}^2(0,\tau_2)} \leq C, \quad \frac{\theta_{10}^2(0,\tau_2)}{\theta_{00}^2(0,\tau_2)} \leq C, \qquad \frac{\theta_{01}^2(z_2,\tau_2)}{\theta_{00}^2(z_2,\tau_2)} \leq C, \quad \frac{\theta_{10}^2(z_2,\tau_2)}{\theta_{00}^2(z_2,\tau_2)} \leq C.
\end{equation*}
This is a direct consequence of the fact that $z_2, \tau_2$ are within a compact set that does not contain any zero of $\theta(z,\tau)$; hence one can write (non-zero) lower and upper bounds for any of the values of theta. If one wants to be a bit more precise, using the same reasoning as in Theorem~\ref{boundonthetas}, we have for $\sqrt{3}/4 \leq \Im(\tau) \leq 1$:
\begin{equation*}
|\theta_{00,01}(z,\tau) - 1| \leq |q|^{1/2}+|q|+\frac{|q|^3}{1-|q|} \leq 0.7859, \qquad |\theta_{10}(z,\tau)| \leq 1 + |q|^{1/4} + \frac{|q|^{5/4}}{1-|q|} \leq 1.958.
\end{equation*}
This gives $C \leq 83.64$ and $\epsilon \geq \frac{0.042^2}{1.7859^2} \simeq \frac{1}{1808}$. Furthermore, with a careful analysis, one can prove that $c_1 = 55$ is enough in Theorem~\ref{thmconvergence}.

In any case, this proves that $(\lambda_n)$ is quadratically convergent. We note that the fact that $z_2, \tau_2$ are within a compact shows that the constants $b_1, b_2, b_3$ exist and are independent of $z, \tau$. This makes the running time of Step~\ref{uniformalgo:subroutine} only dependent in $P$, which was the point of the uniform algorithm. In particular, the number of bits lost during the computation of $F^{\infty}$ or in $\mathfrak{F}$ can be written as $c_1 \log P + c_2$, with $c_1, c_2$ constants independent in $z, \tau$. Hence, the number of bits that are lost in the whole of Step~\ref{uniformalgo:subroutine} is
\begin{equation*}
\sum_{i=1}^n \delta + h + \log(p/2^i) \leq G \log P + H
\end{equation*}
since the number $n$ of steps in Newton's method is $O(\log P)$.

This means the computations in step~\ref{uniformalgo:subroutine} should be carried out at precision $\mathcal{P}' = \mathcal{P} + G \log P + H$, so that the result is accurate with $\mathcal{P}$ bits. This gives a running time of $O(\mathcal{M}(P) \log P)$, independently of $z$ and $\tau$. All the other steps cost no more than $O(\mathcal{M}(\mathcal{P}))$ bit operations. Given the formula for $\mathcal{P}$ in subsections~\ref{section:subsub1} to~\ref{section:subsub2}, this indeed gives us a running time of $O(\mathcal{M}(P) \log P)$.

\section{Implementation}

An implementation using the GNU MPC library~\cite{MPC} for arithmetic on multiprecision complex numbers was developed; we compared our algorithm to our own implementation of Algorithm~\ref{naive} using MPC\footnote{The naive algorithm which only computes $\theta(z,\tau)$ is only 5\% faster; furthermore since Algorithm~\ref{theUniformAlgo} computes all 4 values, it is fair to compare it to Algorithm~\ref{naive}.}. The code is distributed under the GNU General public license, version 3 or any later version (GPLv3+); it is available at the address
\begin{center}
\url{http://www.hlabrande.fr/pubs/fastthetas.tar.gz}
\end{center}
We compared those implementations to MAGMA's implementation of the computation of $\theta(z,\tau)$ (function \verb?Theta?). Each of those implementations computed $\theta(z, \tau)$ for $z~=~0.123456789~+~0.123456789i$ and $\tau = 0.23456789 + 1.23456789i$ at different precisions; the computations took place on a computer with an Intel Core i5-4570 processor. The results are presented in Figure~\ref{benches} and Table~\ref{timings}.

Our figures show that our algorithm outperforms Magma even for computations at relatively low (i.e. 1000 digits) precision\footnote{This is even though Magma only returns $\theta(z,\tau)$, when our algorithm returns 4 values.}, and the naive algorithm for more than 325000 digits of precision. Hence, a combined algorithm which uses the naive method for precisions smaller than 325000 digits, and our method for larger precision, will yield the best algorithm, and outperform Magma in all cases, as shown in Table~\ref{timings}.

\bibliographystyle{abbrv}
\bibliography{biblioarticle}
\nocite{*}

\newpage

\begin{figure}
\caption{Timing results \label{benches}}
\begin{center}
\begin{tikzpicture}[scale=1]

    \begin{axis}[
            xmode=log,
        xlabel=Base 10 precision,
            xmin=500,
            ymode=log,
        ylabel=Time (s),
            ymax=10000000,
        ymin=0,
        ymajorgrids,
    ]

    \addplot [color=cyan] plot coordinates {
(500, 0.004000) 
(1000, 0.008000) 
(1500, 0.02000) 
(2000, 0.03120) 
(3000, 0.06000) 
(4000, 0.09200) 
(5000, 0.1360) 
(6000, 0.1840) 
(7000, 0.2360) 
(8000, 0.2984) 
(9000, 0.3480) 
(10000, 0.4160) 
(11000, 0.4792) 
(12000, 0.5480) 
(13000, 0.6120) 
(14000, 0.6992) 
(15000, 0.7824) 
(16000, 0.8680) 
(17000, 0.9448) 
(18000, 0.9960) 
(19000, 1.072) 
(20000, 1.163) 
(24000, 1.547) 
(28000, 1.956) 
(32000, 2.399) 
(36000, 2.869) 
(40000, 3.3224) 
(45000, 3.9896) 
(50000, 4.5992) 
(55000, 5.4440) 
(60000, 6.1568) 
(64000, 6.7784) 
(70000, 7.8416) 
(80000, 9.3168) 
(90000, 11.089) 
(100000, 12.686) 
(128000, 18.320) 
(256000, 45.5352)
(280000, 53.0368)
(300000, 55.6304)
(325000, 62.7360)
(350000, 70.2424)
(375000, 77.2008)
(400000, 82.3528)
(437000, 89.0496)
(474000, 104.385)
(512000, 111.782) 
(1024000, 263.698) 
(1536000, 432.8576)
(2048000, 625.3864)
(3072000, 996.8273)
(4096000, 1468.144)
    };    

    \addplot[color=black] plot coordinates {
(500, 0) 
(1000, 0) 
(1500, 0.004000) 
(2000, 0.008000) 
(3000, 0.02000) 
(4000, 0.03200) 
(5000, 0.04800) 
(6000, 0.07200) 
(7000, 0.09200) 
(8000, 0.1120) 
(9000, 0.1400) 
(10000, 0.1720) 
(11000, 0.2024) 
(12000, 0.2400) 
(13000, 0.2648) 
(14000, 0.3040) 
(15000, 0.3424) 
(16000, 0.3840) 
(17000, 0.4312) 
(18000, 0.4800) 
(19000, 0.5320) 
(20000, 0.5920) 
(24000, 0.8400) 
(28000, 1.044) 
(32000, 1.347) 
(36000, 1.650) 
(40000, 2.014) 
(45000, 2.502) 
(50000, 3.022) 
(55000, 3.4744) 
(60000, 4.1016) 
(64000, 4.5984) 
(70000, 5.3560) 
(80000, 6.7616) 
(90000, 8.4113) 
(100000, 9.9113) 
(128000, 15.285) 
(256000, 41.5552) 
(280000, 54.2864)
(300000, 50.7944)
(325000, 63.8952)
(350000, 82.3416)
(375000, 79.1400)
(400000, 89.2224)
(437000, 94.8496)
(474000, 120.385)
(512000, 129.880) 
(1024000, 390.3488) 
(1536000, 737.1728)
(2048000, 1275.351)
(3072000, 2243.728)
(4096000, 3921.2808)
    };
    
        \addplot[color=red] plot coordinates {
(500, 0.002000) 
(1000, 0.008000) 
(1500, 0.01800) 
(2000, 0.03400) 
(3000, 0.08800) 
(4000, 0.1740) 
(5000, 0.2960) 
(6000, 0.4580) 
(7000, 0.6540) 
(8000, 0.8719) 
(9000, 1.134) 
(10000, 1.456) 
(11000, 1.840) 
(12000, 2.248) 
(13000, 2.696) 
(14000, 3.2000) 
(15000, 3.7520) 
(16000, 4.3580) 
(17000, 5.0200) 
(18000, 5.7400) 
(19000, 6.5040) 
(20000, 7.3940) 
(24000, 11.522) 
(28000, 16.574) 
(32000, 22.702) 
(36000, 29.688) 
(40000, 38.1820) 
(45000, 50.3820) 
(50000, 64.7540) 
(55000, 81.0940) 
(60000, 99.4299) 
(64000, 116.442) 
(70000, 143.968) 
(80000, 198.436) 
(90000, 264.344) 
(100000, 340.9900) 
(128000, 606.0680) 
    };

     \legend{Our algo ($\mathcal{P}_0$ = 9000 digits)\\Naive\\Magma\\}
    \end{axis}

\end{tikzpicture}
\end{center}
\end{figure}
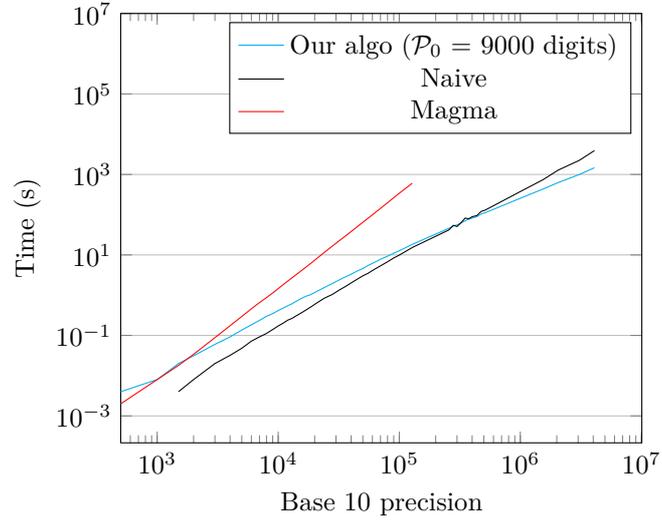

\begin{table}
\begin{center}
\begin{tabular}{c|c|c|c}
Prec (digits) & This work & Naive & Magma \\
\hline
4000 & 0.092 & 0.032 & 0.1740 \\
8000 & 0.298 & 0.112 & 0.8719 \\
16000 & 0.868 & 0.384 & 4.358 \\
32000 & 2.399 & 1.347 & 22.70 \\
64000 & 6.778 & 4.598 & 116.4 \\
128000 & 18.32 & 15.29 & 606.1 \\
256000 & 45.54 & 41.56 &   \\
325000 & 62.74 & 63.90 & \\
512000 & 111.78 & 129.8 &  \\
1024000 & 263.7 & 390.3 & \\
2048000 & 625.4 & 1275 & \\
4096000 & 1468 & 3921 & \\
\hline
\end{tabular}
\end{center}
\caption{\label{timings} Times (in s) of different methods}
\end{table}

\end{document}